\newcommand{\eigenvect}{v}
\newcommand{\eigenvectw}{w}
\newcommand{\herm}{\mathcal{H}}
\renewcommand{\geq}{\geqslant}
\renewcommand{\leq}{\leqslant}
\newcommand{\spec}{\operatorname{Spec}}
\newcommand{\sing}{\operatorname{Sing}}
\newcommand{\GL}{\operatorname{GL}}
\newcommand{\Int}{\operatorname{int}}
\newcommand{\End}{\operatorname{End}}
\newcommand{\Aut}{\operatorname{Aut}}
\newcommand{\CLip}{\operatorname{CLip}}
\newcommand{\arcsinh}{\operatorname{arcsinh}}
\newcommand{\R}{\mathbb{R}}
\newcommand{\N}{\mathbb{N}}
\newcommand{\C}{\mathbb{C}}
\newcommand{\mytop}{\operatorname{top}}
\newcommand{\strategyMax}{\mathcal{B}^s}
\newcommand{\strategyMin}{\mathcal{A}^s}
\newcommand{\payoff}{J}
\newcommand{\Funk}{\operatorname{Funk}}
\newcommand{\RFunk}{\operatorname{RFunk}}
\newcommand{\Th}{\operatorname{Th}}
\newcommand{\Hil}{\operatorname{Hil}}
\newcommand{\A}{\mathcal{A}}
\newcommand{\B}{\mathcal{B}}
\newcommand{\Min}{{\sc Min}\xspace}
\newcommand{\Max}{{\sc Max}\xspace}
\def\<#1,#2>{\langle #1,#2\rangle}
\DeclareMathOperator*{\argmax}{arg\,max}
\DeclareMathOperator*{\argmin}{arg\,min}
\DeclareMathOperator*{\co}{co}
\newcommand{\Lip}{\operatorname{Lip}_1}
\newtheorem{Theorem}{Theorem}
\newtheorem{Corollary}[Theorem]{Corollary}
\newtheorem{Proposition}[Theorem]{Proposition}
\newtheorem{Lemma}[Theorem]{Lemma}
\newtheorem{Definition}[Theorem]{Definition}
\newtheorem{Assumption}[Theorem]{Assumption}
\newtheorem{Example}[Theorem]{Example}
\newtheorem{Remark}[Theorem]{Remark}
\title{The competitive spectral radius of families of nonexpansive mappings}
\author{Marianne Akian \and Stéphane Gaubert \and Loïc Marchesini}
\email{Firstname.Lastname@inria.fr}
\address{INRIA and CMAP, \'Ecole polytechnique, Institut polytechnique de Paris, CNRS. Address: CMAP, \'Ecole polytechnique, 91128 Palaiseau C\'edex, France}
\date{October 23$^{\text{rd}}$ 2024}
\keywords{Nonexpansive mappings, joint spectral radius, Hilbert and Thompson metrics, zero-sum games, uniform value}
\begin{document}

\begin{abstract}
We consider a new class of repeated zero-sum games in which the payoff is the escape rate of a switched dynamical system, where at every stage, the transition is given by a nonexpansive operator depending on the actions of both players. This generalizes to the two-player (and non-linear) case the notion of joint spectral radius of a family of matrices.
We show that the value of this game does exist, and we characterize it in terms of an infinite dimensional non-linear eigenproblem.
This provides a two-player analogue of Ma\~{n}e's lemma from ergodic control and
  extends to the two-player case results of Kohlberg and Neyman (1981),
  Karlsson (2001), and Vigeral and the second author (2012), concerning the asymptotic
  behavior of nonexpansive mappings.
  We discuss two special cases of this game: order preserving and positively homogeneous self-maps of a cone equipped with Funk's and Thompson's metrics, and translations of a finite dimensional normed space.
\end{abstract}
\maketitle
\tableofcontents

\section{Introduction}
\subsection{Context}
A self map $T$ of a metric space $(X,d)$ is \emph{nonexpansive} if 
$d(Tx,Ty)\leq d(x,y)$ for all $x,y\in X$.
Then, its  \emph{escape rate} is defined by
\[
\rho(T) = \lim_{k \to \infty}\frac{d(T^k\bar{x}, \bar{x})}k \enspace ,
\]
where $\bar{x}$ is an arbitrary element of $X$.
Since $T$ is nonexpansive, the existence of the limit follows from a subadditive
argument, and the limit is independent of the choice of $\bar{x}$. Kohlberg and Neyman proved the following characterization of $\rho(T)$ when $T$ acts on a subset of a normed space.

\begin{Theorem}[\cite{KoNe}]\label{KoNey}
	Let $X$ be a star-shaped subset of a normed space $E$ and $T: X \to X$ be a nonexpansive map, then there exists a continuous linear form $f$ on $E$  of dual norm one such that, for all $x \in X$,
	\[
        \rho(T) =  \lim_{k \to \infty}  \frac{\lVert T^k x \rVert}{k}
        = \inf_{y \in X}\lVert Ty - y \rVert 
        = \lim_{k \to \infty}\<f,\frac{T^k x}{k}> \enspace .
	\]
Moreover, $f$ can be chosen in the weak-$*$ closure of the set of extreme 
points of the dual unit ball.
\end{Theorem}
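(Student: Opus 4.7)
The plan is to realise the escape rate $\rho(T)$ through a resolvent-type construction and then linearise the norm by Hahn--Banach. After translating so that the star-centre of $X$ is the origin, for each $\lambda\in(0,1)$ the map $T_\lambda z := (1-\lambda)Tz$ sends $X$ to itself by star-shape and is $(1-\lambda)$-Lipschitz, so (assuming $E$ complete, or by taking the closure of the Picard iterates) it admits a unique fixed point $y_\lambda$ with $Ty_\lambda = y_\lambda/(1-\lambda)$, giving the crucial identity
\[
\|Ty_\lambda - y_\lambda\| = \frac{\lambda}{1-\lambda}\|y_\lambda\| \enspace .
\]

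Set $c := \inf_{y\in X}\|Ty - y\|$. The easy direction $\rho(T) \le c$ follows from the nonexpansive telescoping $\|T^n y - y\| \le n\|Ty - y\|$ combined with the base-point independence of $\lim_n n^{-1}\|T^n y - y\| = \rho(T)$. I will show that $\lim_{\lambda\downarrow 0}\frac{\lambda\|y_\lambda\|}{1-\lambda} = c$: the lower bound is immediate from the identity above and the definition of $c$; for the matching upper bound, I fix $y\in X$ with $\|Ty - y\| < c + \varepsilon$, use the decomposition $y_\lambda - y = (1-\lambda)(Ty_\lambda - Ty) + ((1-\lambda)Ty - y)$ and nonexpansiveness to get $\lambda\|y_\lambda - y\| \le (1-\lambda)(c+\varepsilon) + \lambda\|y\|$, hence $\limsup_{\lambda\downarrow 0}\frac{\lambda\|y_\lambda\|}{1-\lambda} \le c + \varepsilon$.

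Next, Hahn--Banach provides $f_\lambda$ with $\|f_\lambda\|_* = 1$ and $\langle f_\lambda, y_\lambda\rangle = \|y_\lambda\|$; choosing $f_\lambda$ as an extreme point of the weak-$*$ compact face $\{g\in B_{E^*}\colon \langle g, y_\lambda\rangle = \|y_\lambda\|\}$ (Krein--Milman) forces it to be extreme in $B_{E^*}$ itself. The technical heart of the proof is the uniform bound
\[
\langle f_\lambda, Tw - w\rangle \ge \frac{\lambda\|y_\lambda\|}{1-\lambda}\qquad\text{for all } w\in X,
\]
obtained by pairing $\|y_\lambda - (1-\lambda)Tw\| = (1-\lambda)\|Ty_\lambda - Tw\| \le (1-\lambda)\|y_\lambda - w\|$ with $f_\lambda$, dividing by $1-\lambda$, and using the peaking identity $\|y_\lambda - w\| \ge \|y_\lambda\| - \langle f_\lambda, w\rangle$ to cancel the divergent $\|y_\lambda\|$-term. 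Banach--Alaoglu then yields a weak-$*$ cluster point $f$ of $(f_\lambda)$, which lies in the weak-$*$ closure of the extreme points of $B_{E^*}$ and satisfies $\langle f, Tw - w\rangle \ge c$ for every $w\in X$.

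Telescoping over $w = T^k x$ gives $\langle f, T^n x - x\rangle \ge nc$; comparing with the trivial upper bound $\langle f, T^n x\rangle/n \le \|T^n x\|/n \to \rho(T)$ forces $c \le \rho(T)$, and closes the chain $\rho(T) \le c \le \rho(T)$, proving $\rho(T) = c = \lim_n \langle f, T^n x/n\rangle$ as well as the alignment of all three quantities in the statement. The identity $\|f\|_* = 1$ then follows: $\|f\|_*\|T^n x - x\| \ge n\rho(T)$ together with $\|T^n x - x\|/n \to \rho(T)$ gives $\|f\|_* \ge 1$ whenever $\rho(T) > 0$, and in the degenerate case $\rho(T) = 0$ one may simply replace $f$ by any extreme point of $B_{E^*}$. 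The delicate point, where I expect the main work to lie, is precisely the cancellation in the key estimate: the two divergent quantities $\|y_\lambda\|/(1-\lambda)$ and $\|y_\lambda\|$ must leave exactly the first-order remainder $\lambda\|y_\lambda\|/(1-\lambda)$, and this is where the star-shape of $X$ and the peaking condition $\langle f_\lambda, y_\lambda\rangle = \|y_\lambda\|$ play their decisive roles.
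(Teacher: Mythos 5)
The paper does not actually prove this statement---it is quoted from Kohlberg and Neyman---so your argument can only be assessed on its own terms. The resolvent construction is sound as far as it goes: modulo the completeness caveat you flag, the identity $\|Ty_\lambda-y_\lambda\|=\tfrac{\lambda}{1-\lambda}\|y_\lambda\|$, the limit $\tfrac{\lambda}{1-\lambda}\|y_\lambda\|\to c:=\inf_y\|Ty-y\|$, and the easy inequality $\rho(T)\le c$ are all correct. The proof breaks at what you yourself call the technical heart. The claimed uniform bound $\langle f_\lambda,Tw-w\rangle\ge\tfrac{\lambda}{1-\lambda}\|y_\lambda\|$ for \emph{all} $w\in X$ is false. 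Take $E=X=\R$ and $T(w)=-w/2+10$ (nonexpansive, with fixed point $20/3$, so $\rho(T)=c=0$); then $y_\lambda=\tfrac{20(1-\lambda)}{3-\lambda}>0$, hence $f_\lambda=\mathrm{id}$, and $\langle f_\lambda,Tw-w\rangle=10-\tfrac32w$ is negative for $w>20/3$ while your right-hand side is positive. The source of the error is a direction mistake: pairing with $f_\lambda$ yields
\[
\frac{\|y_\lambda\|}{1-\lambda}-\langle f_\lambda,Tw\rangle\ \le\ \|y_\lambda-w\|\enspace,
\]
and to cancel the divergent term you would need to bound $\|y_\lambda-w\|$ \emph{from above} by $\|y_\lambda\|-\langle f_\lambda,w\rangle$; the peaking inequality $\|y_\lambda-w\|\ge\|y_\lambda\|-\langle f_\lambda,w\rangle$ that you invoke goes the wrong way, so nothing can be concluded. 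What the computation actually gives is
\[
\langle f_\lambda,Tw-w\rangle\ \ge\ \frac{\lambda\|y_\lambda\|}{1-\lambda}-D_\lambda(w)\enspace,\qquad D_\lambda(w)\coloneqq\|y_\lambda-w\|-\|y_\lambda\|+\langle f_\lambda,w\rangle\ \ge\ 0\enspace,
\]
and the defect $D_\lambda(w)$ is not small in general (in the example, $D_\lambda(10)\approx 20/3$), so telescoping over $w=T^kx$ loses a linearly accumulating amount and the chain $c\le\rho(T)$ does not close.

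Note moreover that the intermediate goal is unattainable, not merely unproven: in the same example no unit functional $f$ on $\R$ satisfies $\langle f,Tw-w\rangle\ge\rho(T)=0$ for all $w$, since $Tw-w=10-\tfrac32w$ changes sign. This is consistent with the remark following \Cref{KoNey} in the paper, where the inequality $\langle f,T^kx\rangle-\langle f,x\rangle\ge k\rho(T)$ is asserted only along the orbit of a given $x$, with $f$ allowed to depend on $x$. A repair therefore has to produce the functional from the orbit itself---e.g.\ as a weak-$*$ cluster point of norming functionals of $T^{n_i}x-x$ for suitably chosen times $n_i$, which is how \Cref{th-karlsson} and \Cref{GaubertVigeral} are obtained via horofunctions---or else show that the defects $D_\lambda(T^kx)$ stay $o(n)$ along the orbit for a well-chosen coupling of $\lambda$ and $n$; neither step is present in your write-up.
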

They also showed that for all $x\in X$ it is possible to choose
$f$ in \Cref{KoNey} such that for all $k \in \N^*$, we have
\[
\<f,T^k x> - \<f,x> \geq k \rho(T)\enspace .
\]
Karlsson extended this result by proving the following Denjoy-Wolff type theorem, which relies on Gromov's compactification
of a metric space by horofunctions~\cite{Gromov1981}.
The latter are particular Lipschitz functions of Lipschitz constant $1$ which arise when compactifying the space $X$ as follows: we identify a point $x\in X$ to the function $i(x)\coloneqq d(x,\bar{x}) - d(x,\cdot)$ from $X$ to $\R$, where $\bar{x}\in X$ is an arbitrary basepoint; then, we define the horocompactification of $X$ as the closure of $i(X)$ in the topology of pointwise convergence.
(This choice of topology was made e.g.\ in~\cite{rieffel,GAUBERT_2011,Karlsson2024}; the topology of uniform convergence on bounded sets
was originally considered~\cite{Gromov1981},
leading to a different class of horofunctions).
The functions of the closure $\overline{i(X)}$ are called {\em metric functionals}~\cite{Karlsson2024}.
The functions of $\overline{i(X)}/i(X)$ are called \emph{horofunctions}, they constitute the
\emph{horoboundary}.

\begin{Theorem}[See~\cite{Ka}]\label{th-karlsson}
  Let $X$ be a proper metric space, $T$ a nonexpansive self-map of $X$,
  and suppose that every orbit of $T$ in $X$ is unbounded. Then, 
for all $x\in X$, there is a horofunction $h$ 
such that 
\[
h(T^k x)-h(x) \geq k \rho(T)   \quad \forall k\geq 0\enspace .
	\]
\end{Theorem}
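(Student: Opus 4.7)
The plan is to obtain $h$ as a subsequential pointwise limit of the metric-functional-like maps $i(T^{n_j}x)$ along a carefully chosen sequence $n_j \to \infty$, where the selection of $n_j$ is driven by Karlsson's subadditive lemma; the latter locates times at which the orbit exhibits its full growth rate at every intermediate scale.

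First, I would note that $a_n := d(T^n x, x)$ is subadditive, since by nonexpansiveness $a_{n+m} \leq d(T^{n+m}x, T^m x) + a_m \leq a_n + a_m$, so that $a_n/n \to \rho(T) =: \rho$ by Fekete's lemma. The crucial input, which I would isolate as a preliminary lemma, is Karlsson's subadditive lemma: for every $\epsilon > 0$ there exist infinitely many $n$ such that
\[
a_n - a_{n-k} \geq (\rho - \epsilon)\, k \quad \text{for all } 1 \leq k \leq n.
\]
Applying this with $\epsilon_j = 1/j$ produces a sequence $n_j \to \infty$, and combining it with the nonexpansive bound $d(T^{n_j}x, T^k x) \leq d(T^{n_j-k}x, x) = a_{n_j-k}$ gives, for every $k \leq n_j$,
\[
i(T^{n_j}x)(T^k x) - i(T^{n_j}x)(x) = d(T^{n_j}x, x) - d(T^{n_j}x, T^k x) \geq a_{n_j} - a_{n_j-k} \geq \bigl(\rho - \tfrac{1}{j}\bigr)\, k.
\]

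Next, since $X$ is proper, the horocompactification $\overline{i(X)}$ is compact for the topology of pointwise convergence (the maps $i(z)$ are uniformly $1$-Lipschitz and vanish at $\bar x$, and proper metric spaces are separable, so Arzelà--Ascoli applies). I would extract a subsequence of $(i(T^{n_j}x))$ converging pointwise to some metric functional $h$; fixing $k$ and letting $j \to \infty$ in the previous display then produces the desired inequality $h(T^k x) - h(x) \geq k\rho$ (the case $k = 0$ being trivial). To promote $h$ from a metric functional to a genuine horofunction, i.e.\ $h \notin i(X)$, I would invoke the unbounded-orbit hypothesis: when $\rho > 0$ the displayed inequality already forces $d(T^{n_j}x, \bar x) \to \infty$, and when $\rho = 0$ one refines the Karlsson sequence once more so that $d(T^{n_j}x, \bar x) \to \infty$, which is possible by unboundedness. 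In either case no point of $X$ can represent the limit.

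The main obstacle is Karlsson's subadditive lemma itself; it is strictly stronger than bare subadditivity and is typically proved by a clever iteration argument, showing that the failure of the conclusion from some index onwards could be telescoped to produce $\liminf a_n/n < \rho - \epsilon$, contradicting the definition of $\rho$. A secondary care point is confirming that the limit lies in the horoboundary rather than in $i(X)$, which is precisely where the assumption of unbounded orbits comes in.
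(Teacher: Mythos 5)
The paper itself gives no proof of this statement: it is quoted from Karlsson's work \cite{Ka} as background, so there is no internal argument to compare against. Your proposal is in fact Karlsson's original route, and its skeleton is correct: subadditivity of $a_n=d(T^nx,x)$, Karlsson's subadditive lemma producing times $n_j$ with $a_{n_j}-a_{n_j-k}\ge(\rho-\tfrac1j)k$ for all $k\le n_j$, the inequality $i(T^{n_j}x)(T^kx)-i(T^{n_j}x)(x)=a_{n_j}-d(T^{n_j}x,T^kx)\ge a_{n_j}-a_{n_j-k}$ via nonexpansiveness of $T^k$, and compactness of $\overline{i(X)}$ (Ascoli plus separability of a proper space) to extract a pointwise limit $h$ satisfying $h(T^kx)-h(x)\ge k\rho$. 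Your sketch of why the subadditive lemma holds is also the right idea, and you correctly flag it as the main external input.

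The one step I would not accept as written is the final promotion of the metric functional $h$ to a \emph{horofunction}, i.e.\ to an element of $\overline{i(X)}\setminus i(X)$. The principle you invoke --- that $d(z_j,\bar x)\to\infty$ forces every pointwise limit of $i(z_j)$ out of $i(X)$ --- fails for general proper metric spaces. For instance, take $X=\N$ with $d(0,n)=n$ and $d(m,n)=m+n$ for distinct $m,n\ge 1$: this space is proper, yet $i(n)(y)=i(0)(y)$ for every $y\ne n$, so $i(n)\to i(0)\in i(X)$ pointwise while $d(n,\bar x)\to\infty$. (That space admits no nonexpansive map with unbounded orbits, so it does not threaten the theorem, but it does invalidate your closing sentence as a general principle.) When $\rho>0$ there is a clean repair that bypasses the issue entirely: the limit satisfies $h(T^kx)\ge h(x)+k\rho$, hence is unbounded above, whereas every internal point $i(w)=d(w,\bar x)-d(w,\cdot)$ is bounded above by $d(w,\bar x)$; therefore $h\notin i(X)$. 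When $\rho=0$ the inequality is vacuous for this purpose and the claim is genuinely more delicate --- this is exactly where the unbounded-orbit hypothesis carries all the weight. There one should argue as Karlsson does, e.g.\ by passing to record times $n_j$ (where $d(T^{n_j}x,x)>d(T^mx,x)$ for all $m<n_j$, which exist by unboundedness and still yield $h(T^kx)\ge h(x)$), together with a genuine argument that the resulting limit is not of the form $i(w)$; ``refine the sequence so that it goes to infinity'' does not by itself close this gap.
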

Gaubert and Vigeral refined \Cref{th-karlsson}, showing that the horofunction $h$ can be chosen to be independent of $x\in X$ when
$X$ satisfies an assumption of non-positive curvature in the sense of Busemann.
They consider more generally maps $T$ which are nonexpansive with regard to a \emph{hemi-metric} of $X$, i.e.\ a map $d$ taking values in $\R$, verifying the triangular inequality and a weak separation axiom. They established the following maximin characterization of the escape rate.

\begin{Theorem}[See~\cite{GAUBERT_2011}]\label{GaubertVigeral}
	Let $T$ be a nonexpansive self-map of a complete metrically star-shaped hemi metric space $(X,d)$. Then
	\[
	\rho(T) = \inf_{y \in X} d(Ty,y) = \max_{h \in \mathcal{H}}\inf_{x \in X}(h(Tx)-h(x))
	\]
	where $\mathcal{H}$ is the set of metric functionals. Moreover if $d$ is bounded from below and if $\rho(T) > 0$ then any function attaining the maximum is a horofunction. 
\end{Theorem}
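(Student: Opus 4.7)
The plan is to close the chain of inequalities
$$\rho(T) \;\le\; \inf_{y\in X} d(Ty,y) \;\le\; \sup_{h\in\herm} \inf_{x\in X}\bigl(h(Tx)-h(x)\bigr) \;\le\; \rho(T).$$
The first inequality is immediate from nonexpansiveness: the sequence $a_k := d(T^k y, y)$ is subadditive ($a_{k+\ell} \le a_k + a_\ell$ via a telescoping use of $d(T^{k+j}y, T^{j}y)\le d(T^k y, y)$), hence $\rho(T)\le a_1 = d(Ty,y)$ for every $y$. The third inequality is also fairly quick: fix any metric functional $h$ (which, by construction as a pointwise limit of the $i(y)$, is $1$-Lipschitz with respect to the hemi-metric $d$ and vanishes at the basepoint), and let $c := \inf_x (h(Tx)-h(x))$. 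Iterating the dynamical programming inequality gives $h(T^k x)-h(x)\ge kc$ for every $x$ and $k$, while $1$-Lipschitzness yields $h(T^k x)-h(\bar{x}) \le d(\bar{x}, T^k x)$. Combining these, dividing by $k$ and letting $k\to\infty$ yields $c\le \rho(T)$.

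The main obstacle, and the content of the middle inequality, is a Hahn–Banach-type construction: given $c < \inf_y d(Ty,y)$, one has to produce a metric functional $h$ with $h(Tx)-h(x)\ge c$ for all $x$. The approach I would take is a Hopf–Lax / value function construction along finite discrete trajectories. Namely, define
$$ h_c(x) \;:=\; \sup_{n\ge 0}\;\sup_{x_0,\dots,x_n\in X}\Bigl(nc \;-\; \sum_{i=0}^{n-1} d(T x_i, x_{i+1}) \;-\; d(x_n, x)\Bigr),$$
with the convention that the empty sum is zero and $x_0$ is the basepoint $\bar{x}$ (so that $h_c(\bar{x})$ is finite). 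The verification then has four parts. (i)~\emph{Finiteness}: the hypothesis $c < \inf_y d(Ty,y)$, applied to the degenerate trajectories $x_{i+1}=x_i$, prevents the supremum from blowing up, while the general case follows by combining finitely many nonexpansiveness estimates to reduce to the degenerate one; this is where one bounds the Lyapunov score of any path by essentially $d(\bar{x},x)$. (ii)~\emph{$1$-Lipschitz in the hemi-metric}: replacing $x$ by $x'$ only changes the last term $d(x_n,x)$, and the triangle inequality gives $h_c(x')-h_c(x)\le d(x,x')$. (iii)~\emph{Dynamic programming inequality}: shifting the trajectory by one step (prepending $x$ and comparing with a trajectory ending at $Tx$) shows directly from the definition that $h_c(Tx)\ge h_c(x)+c$. (iv)~\emph{Metric functional}: here the \emph{metrically star-shaped} hypothesis is essential. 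It allows one to interpolate along geodesics from $\bar{x}$ to the near-optimal endpoints $x_n$ and thus realize $h_c$ as a pointwise limit of functions of the form $i(y)$; without star-shape one only gets an abstract Lipschitz function, not a metric functional.

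Letting $c\uparrow \inf_y d(Ty,y)$ and using pointwise compactness of $\herm$ (which sits inside a product of bounded intervals by the triangle inequality and $1$-Lipschitzness) yields a cluster point $h^\star\in\herm$ attaining the maximum, establishing the second equality together with the existence of a maximizer. Finally, suppose $d$ is bounded below and $\rho(T)>0$, and suppose the maximizer had the form $h^\star = i(y_0)$ for some $y_0\in X$. Evaluating $h^\star(Tx)-h^\star(x)\ge \rho(T)$ at $x=y_0$ and unwinding the definition of $i(y_0)$ gives $d(y_0, T y_0)\le -\rho(T)$; iterating and using nonexpansiveness, $d(y_0,T^k y_0)\le -k\rho(T)\to -\infty$, contradicting the lower bound on $d$. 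Hence any attaining $h^\star$ lies in $\overline{i(X)}\setminus i(X)$, i.e., is a horofunction, which completes the proof.
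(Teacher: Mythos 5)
First, a remark on scope: the paper does not prove this statement; it quotes it from~\cite{GAUBERT_2011}, whose argument is genuinely different from yours (it exploits completeness and star-shapedness through the fixed points of the contractions $y\mapsto \gamma_{Ty}(s)$ and lets $s\to 1$). So your attempt has to be judged on its own merits, and it has two real gaps, both located at the actual hard points of the theorem.

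\emph{Gap 1: the finiteness claim in step (i) is false exactly where you need it, so the first equality is never proved.} Take the trajectory that follows the orbit, $x_i=T^i\bar x$: every running cost $d(Tx_i,x_{i+1})$ vanishes, so the score of this path evaluated at $x$ is $nc-d(T^n\bar x,x)\ge n\bigl(c-s_n/n\bigr)-d(\bar x,x)$ with $s_n=d(T^n\bar x,\bar x)$. Since $s_n/n\to\rho(T)$, this tends to $+\infty$ for every $c>\rho(T)$; hence $h_c\equiv+\infty$ on all of $(\rho(T),\inf_y d(Ty,y))$ whenever that interval is nonempty, and no reduction to degenerate trajectories can repair this. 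Your chain therefore yields $\rho(T)\le\inf_y d(Ty,y)$ but never the reverse inequality, which is precisely the place where completeness and metric star-shapedness must enter (your proof nowhere uses completeness, and uses star-shapedness only in the unproved step (iv) --- a sign something is missing, since the statement fails without these hypotheses, e.g.\ for an irrational rotation of the circle). What does survive: for $c\le\rho(T)$ one shows by induction, using nonexpansiveness and the triangle inequality, that $\sum_{i=0}^{n-1}d(Tx_i,x_{i+1})\ge d(T^n\bar x,x_n)$, whence the score is at most $nc-s_n+d(x,\bar x)\le d(x,\bar x)$ because $s_n\ge n\rho$ by subadditivity; so $h_\rho$ is finite and $1$-Lipschitz, and after fixing step (iii) (you must \emph{append} $x_{n+1}=Tx_n$, which has zero step cost, and bound the final jump by $d(Tx_n,Tx)\le d(x_n,x)$ --- ``prepending $x$'' does not give the inequality) it satisfies $h_\rho(Tx)-h_\rho(x)\ge\rho$. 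But this only recovers the one-player case of \Cref{extremal}, i.e.\ a maximizer in $\Lip$, not \Cref{GaubertVigeral}.

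\emph{Gap 2: step (iv) is asserted, not proved.} Your $h_c$ is a supremum of translates of the functions $i(x_n)$, and a supremum of metric functionals is in general not a pointwise limit of functions $i(y)$; ``interpolating along geodesics to realize $h_c$ as such a limit'' is exactly the missing construction, and it is unclear how it would apply to this particular $h_c$. Without it you land in $\Lip$ rather than in $\mathcal{H}$, whereas the theorem's content is the maximin over $\mathcal{H}$. The parts of your proposal that are correct are the two easy inequalities ($\rho(T)\le\inf_y d(Ty,y)$ and $\sup_{h\in\mathcal{H}}\inf_x(h(Tx)-h(x))\le\rho(T)$), the compactness argument for attainment of the maximum, and the final argument that an attaining $h^\star=i(y_0)$ would force $d(y_0,T^ky_0)\le-k\rho(T)\to-\infty$, contradicting boundedness below of $d$.
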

In particular, any horofunction $h$ attaining the above maximum verifies
$h(T^kx) - h(x) \geq k\rho(T)$ for all $k \in \N^*$, $x \in X$.
Recently, Karlsson further refined this result, showing that when $T$ is an isometry,
there exists a metric functional $h$ such that $h(Tx) - h(x) = \rho(T)$
for all $x\in X$, see~\cite{Karlsson2024}.

\subsection{Main results}
In the present paper, we introduce a two-player game extension of the notion of escape rate.
We consider a family of nonexpansive self-maps
$(T_{ab})_{(a,b) \in \mathcal{A}\times \mathcal{B}}$
of a hemi-metric space $(X,d)$. We refer to the two players as \Min and \Max.
The sets $\mathcal{A}$ and $\mathcal{B}$ are interpreted as
the {\em action spaces} of \Min and \Max, respectively.
We assume that \Min and \Max alternatively select actions
$a_1\in \mathcal{A}, b_1\in \mathcal{B}, a_2\in \mathcal{A}, b_2\in \mathcal{B},\dots$,
each observing the previous actions of the other player before selecting their next action.

The \emph{escape rate} induced by this infinite sequence of actions is
defined as follows
\[
	\limsup_{k \to \infty}\frac{d(T_{a_k b_k} \circ \dots \circ T_{a_1 b_1}(\bar{x}), \bar{x})}{k} \enspace ,
\]
where as above, the limit is independent of the choice of $\bar{x}$.
Player \Max aims to maximize this quantity, while \Min aims to minimize it.
We refer to this two-player zero-sum game as the \emph{Escape Rate Game}.
We always suppose that the action spaces are compact, and make
a technical assumption 
which is satisfied in particular if the map $(a,b,x)\mapsto
T_{ab}(x)$ is continuous (see \Cref{a-1} below). 
We are interested in the existence of \emph{uniform value},
a notion studied by Mertens and Neyman~\cite{mertens_neyman}.
Informally, a zero-sum repeated game is said to have a \emph{uniform value} $\rho$ if both players can guarantee $\rho$ up to an arbitrary precision provided that they play for a sufficiently long time (see \Cref{def-uniform} below).

To solve this game, we introduce the following \emph{Shapley operator} $S$, acting on the space  $\Lip$
of Lipschitz functions of constant $1$ from $X$ to $(\R,\delta_1)$ with
$\delta_1(x,y)=x-y$:
\[ 
Sv(x)\coloneqq \inf_{a \in \mathcal{A}}\sup_{b \in \mathcal{B}}v(T_{ab}(x)) \enspace .
\]
Our first main result is the following maximin characterization of the value
of the escape rate game.

\begin{Theorem}\label{main}
	The escape Rate Game has a uniform value $\rho$ given by
	\begin{align*}
	\rho = \lim_{k\to\infty}
        \frac{[S^k d(\cdot,\bar{x})](\bar x)}{k} &= \max\{ \lambda \in \R \mid \exists\; v \in \Lip,\; \lambda + v \leq Sv \}\\
        &= \max_{v \in \Lip}\inf_{x \in X}(Sv(x)-v(x)) \enspace .
	\end{align*}
\end{Theorem}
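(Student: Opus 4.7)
The plan is to apply the one-player theory---specifically \Cref{GaubertVigeral}---to the Shapley operator $S$ itself, viewed as a nonexpansive self-map of the hemi-metric space $(\Lip,d_\infty)$ with $d_\infty(v,w) \coloneqq \sup_{x \in X}(v(x)-w(x))$. The preliminaries are direct: $S$ sends $\Lip$ to itself, is $d_\infty$-nonexpansive, and commutes with addition of real constants (so descends to $\Lip/\R$), each coming from the elementary estimate $\inf_a\sup_b f - \inf_a\sup_b g \leq \sup_{a,b}(f-g)$ combined with the nonexpansiveness of each $T_{ab}$. Setting $g \coloneqq d(\cdot,\bar x)$ and $a_k \coloneqq [S^kg](\bar x)$, the pointwise bound $S^m g \leq a_m + g$ (a consequence of $S^mg$ being $1$-Lipschitz) together with the monotonicity of $S$ gives $a_{k+m} = S^k(S^mg)(\bar x) \leq a_m + a_k$, so Fekete's subadditive lemma produces $\rho \coloneqq \lim_k a_k/k = \inf_k a_k/k$.

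The easy inequality $\rho \geq \max_{v \in \Lip}\inf_{x \in X}(Sv(x)-v(x))$ is obtained by iteration: if $\lambda + v \leq Sv$, monotonicity of $S$ yields $k\lambda + v \leq S^kv$, and evaluating at $\bar x$ while using the Lipschitz bound $v \leq v(\bar x) + g$ gives $k\lambda \leq a_k$, whence $\lambda \leq \rho$. The same $v$ furnishes an $\epsilon$-optimal strategy for \Max: after observing \Min's move $a_k$, \Max picks $b_k$ that $\epsilon$-realises $\sup_b v(T_{a_kb}(x_{k-1}))$, producing the Lyapunov-type bound $v(x_k) \geq v(x_0) + k(\lambda - \epsilon)$, whence $d(x_k,\bar x) \geq v(x_k) - v(\bar x) \geq k(\lambda - \epsilon) - O(1)$.

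The main obstacle is the reverse inequality, which amounts to constructing an almost sub-eigenvector $v^* \in \Lip$ with $\rho + v^* \leq Sv^*$. My plan is to apply \Cref{GaubertVigeral} to $S$ on $(\Lip,d_\infty)$, yielding a metric functional $H$ on $\Lip$ with $H(Sv) - H(v) \geq \rho(S)$ for all $v$, and then to identify $H$ with a point evaluation $h_x\colon v \mapsto v(x)$. Point evaluations are indeed metric functionals of $(\Lip,d_\infty)$: the sequence $v_n \coloneqq d(\cdot,x) - n$ lies in $\Lip$, and a direct computation using the Lipschitz inequality $v(y) - d(y,x) \leq v(x)$ yields $d_\infty(v,v_n) - d_\infty(\bar v,v_n) = v(x) - \bar v(x)$ identically in $n$. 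The delicate step---and where I expect the compactness of $\mathcal{A}, \mathcal{B}$ and \Cref{a-1} to be essential---is to show that the max over all metric functionals is attained at such a point evaluation; equivalently, one may construct $v^*$ directly as a pointwise cluster value of $S^kg - k\rho$ while controlling the non-commutation of $\inf_a\sup_b$ with $\limsup_k$. A symmetric argument applied to the dual Shapley operator $\check Sv \coloneqq \sup_b\inf_a v\circ T_{ab}$ then provides a super-invariant $w^*$ with $Sw^* \leq \rho + w^* + \epsilon$, yielding an $\epsilon$-optimal strategy for \Min and establishing the uniform value in the sense of \Cref{def-uniform}.
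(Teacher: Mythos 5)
Your preliminaries, the easy inequality $\rho \geq \sup\{\lambda \mid \exists v,\ \lambda+v\leq Sv\}$, and the extraction of \Max's stationary strategy from a sub-eigenvector all match the paper. But the two steps you flag as delicate are precisely where the proof lives, and the routes you propose for them do not work.

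First, the construction of $v^*\in\Lip$ with $\rho+v^*\leq Sv^*$. Applying \Cref{GaubertVigeral} to $S$ on $(\Lip,d_\infty)$ already has a foundational problem: $d_\infty(v,w)=\sup_x(v(x)-w(x))$ takes the value $+\infty$ on $\Lip\times\Lip$ when $X$ is unbounded (e.g.\ $v=d(\cdot,\bar x)$, $w=-d(\bar x,\cdot)$), so this is not a hemi-metric space in the sense of \Cref{def-hemi}. More seriously, even granting the application, the conclusion has the quantifiers inverted: a maximizing metric functional $H$ equal to a point evaluation $h_{x_*}$ would assert $\exists x_*\ \forall v\in\Lip:\ Sv(x_*)-v(x_*)\geq\rho$, whereas the theorem needs $\exists v^*\ \forall x$. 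And the former statement is false in general: in the vector addition game with $\mathcal{A}=\{0\}$, $\mathcal{B}=\{b_0\}$, $b_0\neq 0$, one has $\rho=\|b_0\|$, yet for every $x_*$ the choice $v=-\|x_*-\cdot\|$ gives $Sv(x_*)-v(x_*)=-\|b_0\|<\rho$; so no point evaluation attains the maximum. Your fallback, a ``pointwise cluster value of $S^kg-k\rho$,'' also fails as stated, because $s_{k+1}-s_k$ need not converge to $\rho$ (only the Ces\`aro averages do), so cluster points of the orbit need not be sub-invariant. The paper's fix is the sup-averaging device: for $\lambda<\rho$ and each compact $C$ one takes $k_C$ with $S^{k_C}d(\cdot,x_0)\geq k_C\lambda+d(\cdot,x_0)$ on $C$ and sets $\theta_C^\lambda=\sup_{0\leq j<k_C}\bigl(S^jd(\cdot,x_0)-j\lambda\bigr)$, which satisfies $S\theta_C^\lambda\geq\lambda+\theta_C^\lambda$ on $C$; one then passes to the limit along nets using the Ascoli--Arzel\`a compactness of $\mathcal{L}_{x_0}$ and the continuity of $S$ (\Cref{prop-contS}). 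This averaging is the missing idea.

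Second, \Min's side. The inf-averaged dual of the above does produce $w^*\in\Lip$ with $Sw^*\leq\lambda+w^*$ for any $\lambda>\rho$, but this is useless to \Min\ unless $w^*$ is \emph{distance-like}: the induction only yields $w^*(x_k)\leq w^*(x_0)+k\lambda$, which bounds $d(x_k,x_0)$ from above only if $w^*\geq d(\cdot,x_0)+\alpha$. Ensuring that is exactly the content of \Cref{th-dual}, which needs the almost-isometry hypothesis and can genuinely fail otherwise (see the Poincar\'e half-plane example in the paper). The paper avoids this entirely: \Min\ picks $q$ with $s_q/q\leq\rho+\epsilon/2$, plays an optimal strategy of the finite-horizon game $\Gamma_q$ (whose value is $s_q$) and repeats it $q$-cyclically; the triangle inequality and nonexpansiveness of the $T_{ab}$ telescope to $\payoff_{mq}(\sigma_q^\infty,\tau)\leq m s_q$, with a bounded correction for intermediate stages. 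This argument, not a super-eigenvector, is what establishes the uniform value from \Min's side, and it is absent from your proposal.
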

\noindent We show in particular that player \Max
has an optimal stationary strategy, and that player \Min
has an optimal (non stationary) strategy.

We say that a function $f\in\Lip$ is \emph{distance-like} if
there exists a constant $\alpha$ such that 
$f(x) \geq d(x,\bar x) + \alpha$.
We denote by $\mathscr{D}$ the set of distance like functions.
We say that a nonexpansive self map $T$ of a hemi metric space $(X,d)$
is an \emph{almost-isometry} with constant $\gamma\in \R$ if it satisfies 
$d(T(x),T(y))\geq d(x,y)-\gamma$, for all $x,y\in X$. Having $S$ act on $\mathscr{D}$, we get the following dual minimax characterization of $\rho$.
\begin{Theorem}\label{th-dual}
  Suppose that the maps $T_{ab}$ constitute a family of almost-isometries
  with a uniform constant. Then. the value of the escape rate game
  satisfies
\[ 
\rho = \inf\{\lambda \in \R \mid v \in \mathscr{D},\; \lambda + v \geq Sv  \} = \inf_{v \in\mathscr{D}}\sup_{x \in X}(Sv(x)-v(x)) \enspace .
\]\end{Theorem}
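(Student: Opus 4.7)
The plan is to prove the two inequalities $\rho \le \inf$ and $\inf \le \rho$ separately.

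\emph{Easy direction} $(\rho \le \inf)$. Suppose $v \in \mathscr{D}$ satisfies $Sv \le v + \lambda$. Monotonicity and translation-invariance of $S$ (i.e.\ $S(u+c) = Su + c$ for constants $c$) give $S^n v \le v + n\lambda$. Because $v \ge d(\cdot, \bar x) + \alpha$ for some $\alpha$, monotonicity of $S^n$ then yields $S^n d(\cdot, \bar x) \le S^n v - \alpha \le v - \alpha + n\lambda$. Evaluating at $\bar x$, dividing by $n$, and invoking \Cref{main} gives $\rho \le \lambda$; the infimum over admissible pairs proves this bound.

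\emph{Hard direction} $(\inf \le \rho)$. Given $\epsilon > 0$, I propose to construct $v_\epsilon \in \mathscr{D}$ with $\sup_x (Sv_\epsilon - v_\epsilon) \le \rho + \epsilon$. The natural candidate is $V_n := S^n d(\cdot, \bar x)$ for a suitable $n$. To see $V_n \in \mathscr{D}$, iterate the almost-isometry inequality to obtain $d(T_n x, T_n \bar x) \ge d(x, \bar x) - n\gamma$ for every $n$-fold composition $T_n = T_{a_n b_n} \circ \cdots \circ T_{a_1 b_1}$; combined with the triangle inequality $d(T_n x, T_n \bar x) \le d(T_n x, \bar x) + d(\bar x, T_n \bar x)$ and taking $\inf_{a_1}\sup_{b_1}\cdots$ on both sides (the sign flip converts the last term into a $\sup_{a_1}\inf_{b_1}\cdots$), one obtains
\[
V_n(x) \ge d(x, \bar x) - n\gamma - \tilde V_n(\bar x),
\]
where $\tilde V_n(\bar x)$ denotes the swapped-order $n$-stage value, finite by the standing compactness and continuity assumptions. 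Hence $V_n \in \mathscr{D}$, and note that $SV_n - V_n = V_{n+1} - V_n$.

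It remains to choose $n$ so that $\sup_x (V_{n+1}(x) - V_n(x)) \le \rho + \epsilon$. At $\bar x$, this is routine: the $\Lip$ property of $V_m$ gives the subadditivity $V_{n+m}(\bar x) \le V_n(\bar x) + V_m(\bar x)$, whence Fekete's lemma and Cesàro averaging produce a subsequence $(n_k)$ with $V_{n_k+1}(\bar x) - V_{n_k}(\bar x) \le \rho + \epsilon/2$. \emph{The main obstacle is upgrading this pointwise bound at $\bar x$ to a uniform bound over $x \in X$.} The plan is to exploit the sandwich $d(\cdot, \bar x) - n\gamma - \tilde V_n(\bar x) \le V_n \le d(\cdot, \bar x) + V_n(\bar x)$ (the upper inequality coming from the $\Lip$ property) to control how much $V_{n+1}(x) - V_n(x)$ can deviate from its value at $\bar x$; if this direct estimate proves too loose, a fallback in the spirit of weak KAM theory is to extract a subsequential limit $v_\infty$ of the normalized sequence $V_n - n\rho$ in a suitable compact-open topology (the almost-isometry hypothesis being exactly what guarantees the required pre-compactness), yielding a supersolution $Sv_\infty \le v_\infty + \rho$, and then to set $v_\epsilon := v_\infty + \epsilon\, d(\cdot, \bar x)$ to enforce both distance-likeness and the strict inequality.
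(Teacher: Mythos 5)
Your easy direction is fine (it is an analytic rephrasing of the paper's \Cref{dlike}), and you have correctly isolated the crux of the hard direction; but neither of your two proposed resolutions of that crux can work, and the obstacle is not merely technical. First, the candidate $V_n=S^n d(\cdot,\bar x)$ genuinely fails: in the paper's own example of the Poincar\'e half-plane with the single isometry $T(z)=z+1$ one has $\rho=0$, yet $V_{n+1}(z)-V_n(z)=d(z+n+1,i)-d(z+n,i)$ equals the fixed positive constant $2\arcsinh(1)$ at $z=i-n$, so $\sup_x\bigl(SV_n-V_n\bigr)(x)$ stays bounded away from $\rho$ for every $n$; no choice of $n$ gives $\rho+\epsilon$. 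Second, the fallback of extracting a limit of $V_n-n\rho$ requires $s_n-n\rho=V_n(\bar x)-n\rho$ to stay bounded, which is exactly the non-defectivity condition of \Cref{non-defective} and fails in the same example ($s_n=2\arcsinh(n)\to\infty$); worse, the object you would obtain, a distance-like $v$ with $Sv\le v+\rho$, provably does not exist there, so the infimum is not attained and no compactness argument can produce an exact supersolution.

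The two ideas you are missing are both in the paper's proof. (a) The pointwise bound at $\bar x$ upgrades to a global one for free: since $S^k d(\cdot,\bar x)\in\Lip$, one has $S^k d(\cdot,\bar x)\le s_k+d(\cdot,\bar x)$ everywhere, so choosing $k$ with $s_k\le k\lambda$ (possible for any $\lambda>\rho$ since $\rho=\inf_k s_k/k$) yields the \emph{global} $k$-step inequality $S^k d(\cdot,\bar x)\le k\lambda+d(\cdot,\bar x)$ --- this is \Cref{lemma-k}, and it works precisely because the reference function is itself $d(\cdot,\bar x)$. (b) One then converts the $k$-step inequality into a one-step inequality by inf-averaging, setting $\theta=\inf\{d(\cdot,\bar x),\,Sd(\cdot,\bar x)-\lambda,\dots,S^{k-1}d(\cdot,\bar x)-(k-1)\lambda\}$; order preservation of $S$ together with the $k$-step inequality gives $S\theta\le\theta+\lambda$, and $\theta$ is distance-like because the almost-isometry hypothesis makes $S$ preserve $\mathscr{D}$ (\Cref{prop-preserve}) and $\mathscr{D}$ is stable under finite pointwise infima. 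This produces, for every $\lambda>\rho$, a distance-like $\theta$ with $S\theta\le\lambda+\theta$, which is all the infimum characterization requires.
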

The infimum is not achieved in general. In contrast, in the maximin characterization, the supremum is always achieved, and the almost-isometry assumption is not needed.

The above minimax and maximin characterizations
of the escape rate are reminiscent of ``Collatz-Wielandt'' type characterizations of the spectral radius,
established by Nussbaum in the context of non-linear Perron-Frobenius theory,
see~\cite{nussbaum86,nussbaumlemmens}. They
may also be thought of as two player versions of the ``Mañé-Conze-Guivarch lemma'' characterizing the value of an ergodic control problem, 
see~\cite{Mane1992,Ma1996} and the discussion by Bousch in~\cite{Bousch2011}.
The functions $v$ arising in these results are also closely related with the weak-KAM solutions defined by Fathi~\cite{fathi97b} for continuous time problems, see also~\cite{Fathi2004}.

Our results apply in particular to the following example of hemi-metric. Suppose that $C\subset \R^n$ is a
proper cone (i.e., convex, closed, pointed, of non-empty interior), and let $\leq_C$ denote
the induced partial order on $\R^n$, so that $x\leq_C y$ if $y-x\in C$. The interior
of $C$ can be equipped with the \emph{Funk} hemi-metric, defined by

\begin{align}
  \Funk(x,y) = \log\inf \{ \alpha>0 \mid x\leq \alpha y\} \enspace. 
  \label{e-def-funk}
\end{align}
This hemi-metric is a key ingredient in Hilbert geometry~\cite{funk,Papadopoulos2008}.
A self-map $T$ of the interior
of $C$ is {\em order preserving} if $x\leq_C y$ implies $T(x)\leq_C T(y)$, for all $x,y$ in the interior
of $C$, and it is {\em positively homogeneous of degree $1$} if $T(\alpha x)= \alpha T(x)$
holds for all $\alpha>0$. These two properties are equivalent to $T$ being nonexpansive in the Funk hemi-metric.
This allows us to define escape rate games for dynamics that are order preserving and positively homogeneous of degree $1$.
Moreover, Corollary~\ref{coro-cones} below shows that for this special class of games, the condition that the maps $T_{ab}$ be almost-isometries
in Theorem~\ref{th-dual}
can be dispensed with; to get a minimax characterization of the value,
it suffices that each map $T_{ab}$ extend
continuously to the closed cone $C$.

By specializing this approach to linear maps, we can solve \emph{matrix multiplication games},
defined by Asarin et al.\ in \cite{asarin_entropy}. In these games,
one considers two compact sets of matrices $\mathcal{A}$ and $\mathcal{B}$
with entries in $\mathbb{C}$,
and the players \Min and \Max alternatively select matrices
$a_1\in\mathcal{A}$, $b_1\in\mathcal{B}$, $a_2\in\mathcal{A}$, \dots
Player \Max wants to maximize the payoff
\begin{align}\label{e-payoff}
\limsup_{k} \|a_1 b_1 \dots a_k b_k\|^{1/k}
\end{align}
whereas Player \Min wants to minimize it. This payoff is independent
of the choice of the norm $\|\cdot\|$. 

When $\mathcal{A}$ and $\mathcal{B}$ are included in $\operatorname{GL}_n$,
the matrix multiplication game is a special case of escape rate game.
Indeed, we consider the cone of Hermitian positive semidefinite matrices,
$C=\herm_n^{+}$. Then, the order $\leq_C$ is nothing but the {\em Loewner order},
and $\Funk(x,y) = \log \lambda_{\max}(xy^{-1})$, where $\lambda_{\max}$
denotes the maximal eigenvalue of a matrix
with real spectrum.
We associate to a pair of matrices $(a,b)\in \mathcal{A}\times \mathcal{B}$
the self map of the interior of $\herm_n^+$ defined by $T_{ab}(x) = b^*a^* x a b$.
Observe that $T_{ab}$ is an isometry in the Funk metric.
Then, choosing $\|\cdot\|$ to be the spectral norm,
we have
\[
\log \|a_1b_1 \dots a_k b_k\| = \frac{1}{2}
\Funk(T_{a_k b_k}\circ \dots \circ T_{a_1 b_1}(I), I)
\]
where $I$ is the identity matrix.

By applying \Cref{main} and \Cref{th-dual}, we obtain the existence together with characterizations of the uniform value of matrix multiplication games with matrices in $\GL_n$. Similarly, \Cref{main} and Corollary~\ref{coro-cones} allow us to derive the same
conclusion for matrices preserving the interior of a proper cone. In the case of nonnegative matrices,
the existence of the value was previously known only under a restrictive ``rectangularity condition''~\cite{asarin_entropy}.

In the special case of ``minimizer free'' matrix multiplication games,
in which Player \Min is a dummy
so that $\mathcal{A}=\{I\}$, the value of the escape rate
game coincides with the {\em joint spectral radius} introduced
by Rota and Strang~\cite{Rota1960}. When Player \Max is a dummy,
it coincides with the notion of {\em lower spectral radius} or {\em joint spectral subradius}
introduced much later by Gurvits~\cite{Gurvits1995},
and further studied by Jungers, Bochi and Morris~\cite{Jungers_2009,Bochi2014}.
In particular, we show in \Cref{non-defective}, in the \emph{minimizer free} case, that the existence of a distance-like function reaching the infimum in \Cref{th-dual} is equivalent to the non-defectivity of the family of matrices.

Because of these special cases, we call {\em competitive spectral radius} the value
of an escape rate game. It also includes as special one-player cases
the notion of joint spectral radius of families of isometries
introduced recently by Breuillard and Fujiwara~\cite{Breuillard2021}, as well as
the joint spectral radius of family of topical
maps studied by Bousch and Mairesse~\cite{Bousch_2001}.
We also note that Kozyakin introduced in~\cite{Kozyakin_2019} a notion of minimax joint spectral radius, which differs from ours (instead of alternating moves, one of the players first chooses an infinite sequence of matrices, leading to a Stackelberg game).

Concrete examples of competitive spectral radii arise
in the study of topological entropies of transition systems~\cite{asarin_entropy}
and in population dynamics, in particular in problems from mathematical biology
and epidemiology, see in particular~\cite{billy,calvezgabriel,Chakrabarti2008}. They also arise in robust Markov decision processes, in which the second player
represents the uncertainty, see~\cite{Goyal2023}.

We illustrate our results by solving, in Section~\ref{sec-vecadd},
a special class of escape rate game, in which the nonexpansive dynamics consists
of translations acting on the Euclidean space. We give an explicit formula
for a maximizing function $v$ in the maximin characterization of \Cref{th-dual},
which is based on a result of discrete geometry (Shapley-Folkman lemma).

\section{Escape rate games: definition and first properties}

\subsection{Hemi-metric spaces}
We shall need the following notion of asymmetric metric~\cite{GAUBERT_2011}.
\begin{Definition}[Hemi-metric]\label{def-hemi}
	A map $d:X\times X \to \R$ is called a \emph{hemi-metric} on the set $X$ if it satisfies the two following conditions:
	\begin{enumerate}
		\item $\forall(x,y,z) \in X^3, \; d(x,z) \leq d(x,y) + d(y,z)$ (triangular inequality)
		\item $\forall (x,y) \in X^2, \; d(x,y) = d(y,x) = 0$ if and only if $x=y$
	\end{enumerate}
\end{Definition}
A hemi-metric may take negative values.
A basic example of hemi-metric on $\R^n$ is the map $\delta_n(x,y)\coloneqq \max_{i\in[n]} (x_i-y_i)$.

Given a hemi-metric $d$, the map
\[ d^{\circ}(x,y) = \max(d(x,y), d(y,x)) \enspace,
\]
is always an ordinary metric on $X$. 
We will refer to it as the \emph{symmetrized metric} of $d$.
In the sequel, we equip $X$ with the topology induced by $d^{\circ}$.

Given two hemi-metric spaces $(X,d)$ and $(Y,d')$, we introduce the notion of a
{\em $1$-Lipschitz} or {\em nonexpansive} function $\eigenvect$ with respect to $d$ and $d'$. Such a function verifies $d'(\eigenvect(x), \eigenvect(y)) \leq d(x,y)$ for all $x,y \in X$. We will especially be interested in the nonexpansive functions from $(X,d)$ to $(\R,\delta_1)$.
It is easy to verify that a nonexpansive function between two hemi-metric spaces is nonexpansive with respect to the symmetrized metrics generated by their respective hemi-metrics. In particular, such a function is continuous with respect to the topologies induced by these symmmetrized metrics.

\subsection{The Escape Rate Game}

The \emph{Escape Rate Game} is the following two-player deterministic perfect information game. 

We fix two non-empty compact sets $\mathcal{A}$ and $\mathcal{B}$, that will represent the action spaces of the two players. We also fix a hemi-metric space
$(X,d)$, which will play the role of the state space of the game. To each pair
$(a,b) \in \mathcal{A}\times \mathcal{B}$, we associate a nonexpansive
self-map $T_{ab}$ of $(X,d)$. The initial state $x_0 \in X$ is given. We construct inductively a sequence of states
$(x_k)_{k\geq 0}$ as follows. Both players observe the current state $x_k$.
At each turn $k\geq 1$, player \Min chooses an action $a_k \in \mathcal{A}$ and then, after having observed the action $a_k$,
player \Max chooses an action $b_k \in \mathcal{B}$.
The next state is given by $x_{k}= T_{a_{k} b_{k}}(x_{k-1})$.

First, we define the {\em game in horizon $k$}, denoted by $\Gamma_k$.
There are $k$ successive turns, and player \Min pays to \Max the following amount:
\begin{equation}
	\payoff_k(a_1 b_1 \dots a_k b_k) = d(T_{a_k b_k} \circ \dots \circ T_{a_1 b_1}(x_0),x_0) \enspace .\label{e-def-jk}
\end{equation}
Player \Max aims to maximize this quantity, whereas
player \Min aims to minimize it.

We also define the \emph{escape rate game}, denoted $\Gamma_\infty$. Here, an infinite number of turns are played, and player \Min aims to minimize the {\em escape rate}
\begin{equation}
\payoff_\infty(a_1b_1a_2b_2\dots)\coloneqq	\limsup_{k \to \infty}  \frac{\payoff_k (a_1 b_1 \dots a_k b_k)}k \enspace ,\label{e-limsup}
\end{equation} 
while player \Max wants to maximize it. It follows from
the nonexpansiveness of the maps $T_{ab}$ that this limit is independent of the choice of $x_0$.

A {\em strategy} is a map that assigns a move to any finite history of the game (sequence of states and actions
in successive turns).
To formalize this notion, it will be convenient to use
the notation $\mathcal{C}^*$
for the set of all finite sequences of elements from a set $\mathcal{C}$.
Then, a strategy for \Min is a map
\begin{equation*} 
	\sigma : (X\times \mathcal{A} \times \mathcal{B})^*\times X \to \mathcal{A},
\end{equation*}
where the action chosen at turn $k+1$ depends on the history of states and actions 
\[(x_0,a_1,b_1,x_1,\ldots, a_k,b_k,x_k).\]
Similarly, a strategy for \Max is a map
\begin{equation*}
	\tau : (X\times \mathcal{A} \times \mathcal{B})^* \times X \times \mathcal{A} \to \mathcal{B} \enspace .
\end{equation*}
The set of strategies of \Min (resp \Max) will be written $\strategyMin$ (resp $\strategyMax$).
A strategy $\sigma \in \strategyMin$ of Min is called \emph{stationary} if it is independent of the turn $k$ and depends only on the position in the space,
i.e.\ if for all $x \in X$, the restriction of $\sigma$, $\sigma_x : (X \times \mathcal{A} \times \mathcal{B})^* \times \{x\} \to \mathcal{A}$ is constant.
Similarly, a strategy of \Max\ is \emph{stationary} if it is independent of the turn $k$, and depends only on
the current state and last action of \Min.
Since the game is deterministic, any strategy can be rewritten as a function of the initial state and of the history of actions,
but then the stationarity may not be visible.

For every pair $(\sigma,\tau)\in \strategyMin\times\strategyMax$, 
we denote by $\payoff_k(\sigma,\tau)$ the payoff of $\Gamma_k$,
as per \eqref{e-def-jk},
assuming that the sequence of actions $a_1,b_1,a_2,b_2,\dots$ is generated according to the strategies $\sigma$ and $\tau$, and similarly, we denote by
$\payoff_\infty(\sigma,\tau)$ the escape rate defined above.

Recall that the {\em value} of a game is the unique quantity that the players can both guarantee. Formally
a game with strategy spaces $\strategyMin$ and $\strategyMax$ and payoff
function $\strategyMin \times \strategyMax \to \R$, $(\sigma,\tau)\mapsto
\payoff(\sigma,\tau)$, {\em has a value} $\lambda \in \R$ if $\forall \epsilon > 0, \; \exists \sigma^*\in\strategyMin, \tau^*\in\strategyMax$ such that
\begin{equation*}
\payoff(\sigma^*, \tau) \leq \lambda + \epsilon \text{ and }\payoff(\sigma, \tau^*) \geq \lambda - \epsilon 
\end{equation*}
for all strategies $\sigma\in\strategyMin$ and $\tau\in\strategyMax$. Such strategies are called $\epsilon$-optimal strategies. When $\epsilon=0$,
one gets \emph{optimal strategies}, and in particular, $\lambda= \payoff(\sigma^*, \tau^*)$. Equivalently, the game has a value if the
following \emph{minimax} and \emph{maximin} coincide:
\[
\inf_{\sigma\in\strategyMin}\sup_{\tau\in\strategyMax} \payoff(\sigma,\tau) = \sup_{\tau\in\strategyMax}\inf_{\sigma\in\strategyMin} \payoff(\sigma,\tau) \enspace.
\]
The outer infimum and supremum are reached when the players have optimal strategies.

\subsection{Examples of escape rate games}

\subsubsection{Vector Addition Games}
We consider the space $X =\R^n$ equipped with the metric $d$ induced
by a norm $\|\cdot\|$. The sets of actions are 
two non-empty compact subsets
$\mathcal{A}$ and $\mathcal{B}$ of $\R^n$.
Each pair of actions $(a,b)\in \mathcal{A}\times \mathcal{B}$
induces a translation  $T_{ab}$ acting on $\R^n$, given by $T_{ab}(x) = x+a+b$,
which is not only nonexpansive but also an isometry.
The state of the game evolves
according to the dynamics $x_{k}= a_{k}+b_{k} +x_{k-1}$.
Then, the escape rate is given by
\[
\payoff_\infty(a_1b_1a_2b_2\dots) = \limsup_{k\to\infty}\frac{\|a_1+b_2+ \dots +a_k+b_k\|}{k} \enspace.
\]

\subsubsection{Conical Escape Rate Games}
\label{sec-funk}
As in the introduction, we assume that $C\subset \R^n$ is a proper cone---i.e., a cone that is convex, closed, pointed, of non-empty interior.
For all $y,z\in C\setminus\{0\}$,
we define 
\[
d(y,z) \coloneqq
\Funk(y,z)\coloneqq \log\inf \{\alpha>0 \mid y\leq \alpha z\}
\enspace .
\]
In particular, $\Funk(y,z)=\log\inf\emptyset=+\infty$ if there is no $\alpha>0$ such that
$y\leq \alpha z$.
A {\em part} of the cone $C$ is an equivalence class of $C\setminus\{0\}$ for the relation $y\sim z$ if $\alpha' z\leq y\leq \alpha z$ for
some $\alpha,\alpha'>0$. The map $\Funk(\cdot,\cdot)$ is referred to as the Funk metric.
Its restriction
to every part is a hemi-metric in the sense of Definition~\ref{def-hemi}.

The symmetrized metric, known as {\em Thompson}'s part metric,
\[
d^\circ(y,z) = \Th(y,z)\coloneqq \max (\Funk(y,z),\Funk(z,y)) \enspace ,
\]
is a bona fide metric on every part of $C$.
A related ``metric'' of interest is {\em Hilbert}'s projective metric
\[
\Hil(y,z)\coloneqq \Funk(y,z)+ \Funk(z,y) \enspace .
\]
The term projective metric refers to the fact that $\Hil(y,z)=0$ iff $y$ and $z$
are proportional.
See~\cite{nussbaum88,Papadopoulos2008,LN12} for background on these metrics.

For instance, if $C=(\R_{\geq 0})^n$, then $\Int C=(\R_{>0})^n$, and
for all $y,z\in (\R_{>0})^n$, setting $\mytop(x)\coloneqq\max_i x_i$,
\[
\Funk(y,z)  = \log \max_{i \in [n]}\frac{y_i}{z_i} = \mytop(\log y-\log z),
\qquad
\Th(y,z) = \|\log y-\log z\|_\infty
\enspace .
\]
Similarly, if $C=\herm_n^+$ is the cone of hermitian positive semidefinite matrices, thought of as a real vector space,
then, $\Int C$ is the cone of positive definite hermitian matrices, and
\[
\Funk(y,z) = \log \lambda_{\max}(yz^{-1}) \enspace .
\]
\begin{Remark}
 Note that $\Funk(y,z)$ may take negative values.
 In contrast,
  Papadopoulos and Troyanov require in~\cite{Papadopoulos2008} weak metrics to be nonnegative,
 and so define the Funk metric to be $\max(0, \Funk(y,z))$. We work
  with the unsigned version as it captures more information.
\end{Remark}

We call \emph{order preserving} a self-map $f$ of $\Int C$ such that $x \leq_C y \implies f(x) \leq_C f(y)$. It is called \emph{positively homogeneous} (of degree $1$) if for all $\lambda > 0, x \in C$, $f(\lambda x)= \lambda f(x)$. Satisfying both these conditions is equivalent to the map being nonexpansive in the Funk metric.
In particular, we denote by $\End(C)$ the set of linear self-maps of $\R^n$ which preserve
the open cone $\Int C$. Any element of $\End(C)$ is nonexpansive for
the Funk metric.

For instance, if $C=(\R_{\geq 0})^n$, then,
$\End(C)=\mathcal{M}^n_{+}$, the set of nonnegative matrices $M$
with at least one non-zero entry per column, acting on row vectors
by $x\mapsto xM$.
If $C=\herm_n^+$, the set $\End(C)$ is harder to describe. It contains
all {\em completely positive maps} of the form
\(
T(x) = \sum_{i=1}^k a_i^*x a_i,
\)
where $a_1,\dots,a_k \in \C^{n\times n}$ are such that
$\sum_{i=1}^k a_i^* a_i$
is invertible.

The \emph{conical escape rate game} associated with the cone $C$
is defined as the escape rate game played on the interior of $C$,
equipped with the hemi-metric $d(x,y)=\Funk(x,y)$.
The two following examples are subclasses of such games.
\subsubsection{Families of Nonnegative Matrices}\label{Non-m}
We consider $C=(\R_{\geq 0})^n$. Let $\mathcal{A}$ and $\mathcal{B}$ 
be two non-empty
compact subsets of $\mathcal{M}^n_+$.
Each pair of actions $(A,B)\in \mathcal{A}\times \mathcal{B}$ induces
a self-map of $(\R_{>0})^n$, given by
\[
T_{AB}(x) \coloneqq xAB\enspace,
\]
where $x$ is interpreted as a row vector.
Since $T_{AB}\in \End((\R_{\geq 0})^n)$, it is nonexpansive in the Funk hemi-metric.

This yields an escape rate game, in which the payoff is given by 
\begin{align*}
  \limsup_{k \to \infty} \frac{\Funk(x_k, x_0)}{k} &= \limsup_{k \to \infty} \log \max_{i \in \llbracket 1,n \rrbracket} (\frac{(x_k)_i}{(x_0)_i})^{\frac{1}k}\\
	& = \log (\limsup_{k \to \infty} (\max_{i \in \llbracket 1,n \rrbracket}(x_k)_i)^{1/k})\\
	& = \log(\limsup_{k \to \infty}\lVert x_k \rVert^{1/k}_{\infty})\enspace,
\end{align*}
where \[
x_k = x_0 A_1 B_1 \dots A_k B_k\enspace .
\]
Taking $x_0 = (1,\dots, 1)$ we get $\lVert x_k \rVert_{\infty} = \lVert A_1B_1 \dots A_kB_k \rVert$, where for $A = (a_{ij}) \in \R^{n\times n}$, $\lVert A \rVert = \max_{i}\sum_{k=1}^n \lvert a_{ik} \rvert$, and we recover (up to an exponential transformation) the payoff of a matrix multiplication game.

The {\em reverse Funk metric}
\[
\RFunk(x,y)\coloneqq\Funk(y,x)
\]
is also of interest. It yields an escape rate game in which the payoff is given
by
\begin{align*}
  \limsup_{k\to\infty} \frac{\RFunk(x_k,x_0)}{k}
  & = \limsup_{k\to\infty}\log  \max_{i\in\llbracket 1,n\rrbracket}(\frac{(x_0)_i}{(x_k)_i})^{1/k}\\
  &= - \liminf_{k\to \infty} \log  \min_{i\in\llbracket 1,n\rrbracket}(\frac{(x_k)_i}{(x_0)_i})^{1/k}
\enspace .
\end{align*}
Then, one player wants to maximize the {\em smallest} growth rate of the coordinates of $x^k$,
whereas the other player wants to maximize it.

Such models arise in population dynamics, control of growth processes, and epidemiology: $x_k$ represents a population profile at time $k$; one player wishes to minimize the growth rate of the population whereas the other player wishes to maximize it, see~\cite{billy,calvezgabriel,Chakrabarti2008}.

\subsubsection{Families of Matrices in $\GL_n(\C)$} \label{Inv-m}
Every matrix $M\in \GL_n(\C)$ yields a \emph{congruence operator}, 
\[
	\phi_M : X \mapsto M^*XM
        \]
        which is an automorphism of the cone of positive semidefinite Hermitian matrices,
        i.e., $\phi_M\in \Aut(\herm_n^+)$.

        We say that a function $\nu$ on $\R^n$ is a {\em symmetric Gauge function} if it is
        invariant by permutation of the variables, convex and positively homogeneous of degree $1$. Every symmetric Gauge function $\nu$ yields a hemi-metric of Finsler type on $\Int(\herm_n^+)$ (\cite{nussbaumfinsler})
\[d_{\nu}(X,Y) = \nu(\log(\spec(XY^{-1}))) \enspace .\]
If $M\in \GL_n(\C)$, then,
we readily check that $\phi_M$ is an isometry of $(\Int (\herm_n^+), d_{\nu})$.

Consider $\mathcal{A}, \mathcal{B} \subset \GL_n$ and the operators $(\phi_{AB})_{(A,B) \in \mathcal{A}\times \mathcal{B}}$, which are
isometries, as $\phi_{AB} = \phi_B \circ \phi_A$  We denote by $\sing$ the set of singular values
of a matrix.
We get that
\[d_\nu(\phi_{A_kB_A}\circ \dots \circ \phi_{A_1B_1}(I),I)  =
2 \nu(\log(\sing(A_1B_1\dots A_kB_k))\]
Therefore, depending on the Gauge function we can get some insights about the spectrum and singular values of the infinite product of matrices. In particular, when $\nu(x)=\mytop(x)=\max_i x_i$,
\begin{align*}
d_{\mytop}(\phi_{A_kB_A}\circ \dots \circ \phi_{A_1B_1}(I),I)  &= \Funk(\phi_{A_kB_A}\circ \dots \circ \phi_{A_1B_1}(I),I)\\
&=2\log \|A_1B_1\dots A_kB_k\|
\end{align*}
where $\|\cdot\|$ is the spectral norm.
We recover the {\em matrix multiplication games} introduced
in~\cite{asarin_entropy}. 

\section{Preliminary Results}
To prove the existence and characterize the value of escape rate games, we use an operator approach (see \cite{Rosenberg2001}). Throughout the paper,
we make the following assumption.
\begin{Assumption}\label{a-1}
For all $x\in X$,  the maps
$b \mapsto T_{ab}(x)$ and $a \mapsto T_{ab}(x)$ are continuous,
and for all compact sets $K$, the set $\{T_{ab}(x) \ | \ (a,b,x) \in \mathcal{A} \times \mathcal{B} \times K\}$ is compact.
\end{Assumption} 
\noindent This is trivially verified if $(a,b,x) \mapsto T_{ab}(x)$ is continuous. 

We denote by $\Lip$ the set of $1$-Lipschitz functions
from the hemi-metric space $(X,d)$ to $(\R,\delta_1)$ (recall that $\delta_1(x,y)= x-y$). We define the following {\em Shapley operator} $S$, acting on $\Lip$,
\begin{equation}
	Sv(x) \coloneqq \inf_{a\in \mathcal{A}} \sup_{b \in \mathcal{B}} v(T_{ab}(x)) \enspace .\label{e-def-shapley}
\end{equation}
The map $S$ is order preserving and commutes with the addition of a constant.
Moreover, as proven in \Cref{prop-contS} below, \Cref{a-1} ensures that $S$ preserves the set $\Lip$ and is continuous for the topology of uniform convergence on compact sets.
It also ensures that the infimum and supremum in~\eqref{e-def-shapley} are attained.

\begin{Proposition}
  Let $x_0 \in X$, the set $\mathcal{L}_{x_0}=\{f\in\Lip\mid f(x_0)=0\}$
  is compact for the topology of compact convergence (otherwise called uniform convergence on compact sets).
\end{Proposition}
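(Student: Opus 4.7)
The plan is to apply an Arzelà--Ascoli-type argument, with the twist that our starting hypothesis is nonexpansiveness with respect to a hemi-metric, not an ordinary metric. The first observation is that any $f\in\Lip$ is automatically $1$-Lipschitz with respect to the symmetrized metric $d^\circ$: indeed $f(x)-f(y)\leq d(x,y)$ and $f(y)-f(x)\leq d(y,x)$ together give $|f(x)-f(y)|\leq d^\circ(x,y)$. In particular, $\mathcal{L}_{x_0}$ is (uniformly) equicontinuous for the topology induced by $d^\circ$, which is the topology we have fixed on $X$.

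The second observation is pointwise boundedness: for every $f\in\mathcal{L}_{x_0}$ and every $x\in X$, $|f(x)|=|f(x)-f(x_0)|\leq d^\circ(x,x_0)$, a finite bound depending only on $x$. Next I would show that $\mathcal{L}_{x_0}$ is closed in $\R^X$ endowed with the product (pointwise convergence) topology: the defining conditions $f(x_0)=0$ and $f(x)-f(y)\leq d(x,y)$, for each fixed pair $(x,y)$, are closed under pointwise convergence, and the intersection of closed sets is closed. Combined with the pointwise bound, Tychonoff's theorem applied to the product $\prod_{x\in X}[-d^\circ(x,x_0),d^\circ(x,x_0)]$ gives that $\mathcal{L}_{x_0}$ is compact for the topology of pointwise convergence.

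To conclude, I would invoke the classical fact that on an equicontinuous family of real-valued functions on a topological space, the topology of pointwise convergence coincides with the topology of uniform convergence on compact sets. Since $\mathcal{L}_{x_0}$ is equicontinuous by the first step, its compactness for pointwise convergence upgrades to compactness for uniform convergence on compact sets, which is the conclusion. The only mildly subtle step is this last coincidence of topologies on an equicontinuous family; everything else is routine given that the hemi-metric bound $f(x)-f(y)\leq d(x,y)$ forces nonexpansiveness with respect to $d^\circ$.
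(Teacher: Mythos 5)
Your proof is correct and follows essentially the same route as the paper: equicontinuity via $1$-Lipschitzness for $d^\circ$, pointwise boundedness by $d^\circ(x,x_0)$, closedness, and the coincidence of the pointwise and compact-convergence topologies on equicontinuous families. The only difference is that you unpack the Ascol\`a--Arzel\`a step into an explicit Tychonoff argument where the paper simply cites the theorem; the content is the same.
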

	
	\begin{proof}
	  This set is obviously equi-continuous since it is equi-Lipschitz (for the hemi-metrics as well as the symmetrized metrics). Furthermore $\forall x\in X, \forall v \in \mathcal{L}_{x_0}$, we have $|v(x)| = |v(x) - v(x_0)| \leq d^{\circ}(x,x_0)$. So $\forall x$, $\{v(x) \mid v \in \mathcal{L}_{x_0} \}$ is a bounded set in $\R$ and therefore it is relatively compact. Now, by the  Ascoli-Arzela theorem,
we know that $\mathcal{L}_{x_0}$ is relatively compact for the topology of compact convergence (which coincides with the topology of pointwise convergence as $\mathcal{L}_{x_0}$ is equi-continuous \cite[Th 15 on p.232]{Kelley1975}). It is closed for this topology so it is compact.\end{proof}
\begin{Proposition} \label{prop-contS}
Under \Cref{a-1}, the operator $S$ preserves $\Lip$ and
is continuous from $\Lip$ to itself for the topology of compact convergence.
\end{Proposition}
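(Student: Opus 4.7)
The plan is to establish the two claims---preservation of $\Lip$ and continuity for compact convergence---by directly manipulating the inf-sup definition of $S$, without needing any compactness argument on $\Lip$ itself.

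First I would verify that $Sv(x)\in\R$ for every $v\in\Lip$ and $x\in X$. Applying \Cref{a-1} with $K=\{x\}$, the image $\{T_{ab}(x):(a,b)\in\A\times\B\}$ is compact in $X$, hence bounded with respect to the symmetrized metric $d^\circ$; since every $v\in\Lip$ is $1$-Lipschitz from $(X,d^\circ)$ to $(\R,|\cdot|)$, it is bounded on this set and the iterated $\inf$--$\sup$ defining $Sv(x)$ is finite. Next, for the Lipschitz property, I would use that $v\circ T_{ab}\in\Lip$ for each pair $(a,b)$ (composition of nonexpansive maps), which yields
\[
v(T_{ab}(x))\leq v(T_{ab}(y))+d(x,y)\qquad \forall a\in\A,\ b\in\B,\ x,y\in X.
\]
Taking the supremum over $b$, then the infimum over $a$---operations that commute with the addition of the constant $d(x,y)$---gives $Sv(x)\leq Sv(y)+d(x,y)$, so $Sv\in\Lip$.

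For continuity of $S$, let $v_n\to v$ in the topology of compact convergence and fix a compact set $K\subset X$. By \Cref{a-1}, the set
\[
K'\coloneqq\{T_{ab}(x):(a,b,x)\in\A\times\B\times K\}
\]
is compact in $X$. Using the elementary bounds $|\inf_a g(a)-\inf_a h(a)|\leq\sup_a|g(a)-h(a)|$ and $|\sup_b g(b)-\sup_b h(b)|\leq\sup_b|g(b)-h(b)|$ applied successively to the inner sup over $b$ and the outer inf over $a$, I get
\[
\sup_{x\in K}|Sv_n(x)-Sv(x)|\ \leq\ \sup_{(a,b,x)\in\A\times\B\times K}|v_n(T_{ab}(x))-v(T_{ab}(x))|\ \leq\ \sup_{y\in K'}|v_n(y)-v(y)|,
\]
and the right-hand side tends to $0$ by uniform convergence of $v_n$ to $v$ on the compact set $K'$. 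This proves uniform convergence of $Sv_n$ to $Sv$ on $K$.

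The main technical point---more a matter of bookkeeping than a genuine obstacle---is that the boundedness of functions in $\Lip$ on a compact set must be controlled via the symmetrized metric $d^\circ$, because the hemi-metric $d$ may be negative or asymmetric. \Cref{a-1} is tailored precisely so that the image of a compact set under the whole family $(T_{ab})$ remains compact in the topology of $X$, reducing the continuity of $S$ on the compact $K$ to uniform convergence of $v_n$ on the single compact $K'$.
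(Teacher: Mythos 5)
Your proof is correct and follows essentially the same route as the paper's: the key estimate $\sup_{x\in K}\lvert Sv'(x)-Sv(x)\rvert\le\sup_{y\in K'}\lvert v'(y)-v(y)\rvert$, with $K'$ compact by \Cref{a-1}, does all the work for continuity, and preservation of $\Lip$ is obtained by the same composition-plus-$\inf$/$\sup$ argument (your finiteness argument via boundedness of $v$ on the compact image in $(X,d^\circ)$ is a harmless variant of the paper's). The only cosmetic difference is that you phrase continuity with sequences where the paper uses nets (the topology of compact convergence need not be metrizable), but since your estimate bounds one seminorm by another it yields full topological continuity regardless.
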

\begin{proof}
By \Cref{a-1},
for any $x\in X$, and $a\in \A$, the map $b\in\B \mapsto T_{ab}(x)\in X$ 
is continuous.
Let $v\in \Lip$.
Since then $v$ is continuous from $X$ to $\R$ and $\B$ is compact,
we get that 
$Sv(x)\leq \sup_{b\in \B} v (T_{ab}(x))<+\infty$. Similarly, $Sv(x)>-\infty$.
This implies that $Sv(x)\in\R$ for all $x\in X$.
By assumption the maps $T_{ab}$ are nonexpansive, %
all maps $x\mapsto v(T_{ab}(x))$ are in $\Lip$, so that $Sv$ also
belongs to $\Lip$, as $\Lip$ is preserved by suprema and infima.

Let us now show that $S$ is continuous for the topology of compact convergence.
	   Let $I$ be a directed set and consider a net $(v_i)_{i \in I}$ converging to a function $v$
          that belongs to $\Lip$. Let $K$ be a compact subset of $X$. We have
\begin{align*}\sup_{x \in K} |Sv_i(x)-Sv(x)| &\leq \sup_{(x,a,b) \in K \times \mathcal{A} \times \mathcal{B}}|v_i(T_{ab}(x)) - v(T_{ab}(x))| \\
            &= \sup_{y \in Y  } |v_i(y) - v(y)|
		\end{align*}
		where, by \Cref{a-1},
                $Y\coloneqq\{T_{ab}(x) | (x,a,b) \in K \times \mathcal{A} \times \mathcal{B}\}$ is compact.
                Therefore the RHS converges to 0 and so does the LHS. So $Sv_i \to Sv$ in the topology
                of compact convergence.
	\end{proof}
	
\begin{Proposition}\label{prop-nonempty}
	There exists at least one additive eigenvalue of $S$ associated to an eigenvector in $\Lip$, i.e.
	\begin{equation*}
		\exists \lambda \in \R \text{ such that } \exists \eigenvect \in \Lip, S\eigenvect = \lambda + \eigenvect
	\end{equation*}
\end{Proposition}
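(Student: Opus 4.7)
My plan is to apply a Schauder--Tychonoff type fixed point theorem to a suitably normalized version of $S$ acting on the compact convex set $\mathcal{L}_{x_0}$.

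Fix any base point $x_0\in X$ and define the \emph{normalized Shapley operator}
\[
\tilde S v \coloneqq Sv - (Sv)(x_0) \enspace .
\]
First I would check that $\tilde S$ sends $\mathcal{L}_{x_0}$ into itself. By Proposition~\ref{prop-contS} we have $Sv\in\Lip$, and adding a constant preserves $\Lip$, so $\tilde S v\in \Lip$; moreover $(\tilde S v)(x_0)=0$ by construction. Thus $\tilde S(\mathcal{L}_{x_0})\subseteq \mathcal{L}_{x_0}$.

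Next I would observe that $\mathcal{L}_{x_0}$ is a convex subset of the locally convex topological vector space $C(X,\R)$ equipped with the topology of compact convergence: the $1$-Lipschitz condition is preserved under convex combinations (the symmetrized Lipschitz inequality $|v(x)-v(y)|\leq d^\circ(x,y)$ passes to convex combinations by the triangular inequality for absolute value), and the linear constraint $v(x_0)=0$ is preserved as well. The previous proposition tells us that $\mathcal{L}_{x_0}$ is compact in the topology of compact convergence. The normalized operator $\tilde S$ is continuous on $\mathcal{L}_{x_0}$: continuity of $S$ is given by \Cref{prop-contS}, and the evaluation functional $v\mapsto v(x_0)$ is continuous for the topology of pointwise convergence, hence a fortiori for the topology of compact convergence.

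I can now invoke the Schauder--Tychonoff fixed point theorem (which applies to any continuous self-map of a non-empty compact convex subset of a locally convex Hausdorff topological vector space) to obtain some $v\in\mathcal{L}_{x_0}$ with $\tilde S v=v$. Setting $\lambda \coloneqq (Sv)(x_0)\in\R$, we get $Sv=v+\lambda$, as desired. The main potential obstacle is verifying that the setting fits the Schauder--Tychonoff framework, in particular the convexity of $\mathcal{L}_{x_0}$ and the fact that the topology of compact convergence makes $C(X,\R)$ a locally convex Hausdorff space; both are standard, so the argument is essentially a packaging of the compactness and continuity results already established.
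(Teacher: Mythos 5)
Your argument is correct and is essentially identical to the paper's own proof: both normalize $S$ to $\tilde S v = Sv - (Sv)(x_0)$, check that $\tilde S$ is a continuous self-map of the convex compact set $\mathcal{L}_{x_0}$ in the locally convex space $\mathcal{C}(X)$ with the topology of compact convergence, and apply the Schauder--Tychonoff fixed point theorem. No differences worth noting.
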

	\begin{proof}
	  Let us recall that, given a topological space $X$,
          the space of continuous real-valued functions, $\mathcal{C}(X)$, equipped with the topology of uniform convergence on compact sets, which is defined by the family of semi-norms $\phi_K : f \mapsto \sup \{\lvert f(x) \rvert \mid x \in K \}$ where $K$ varies over the directed set of all compact subsets of $X$, is a locally convex topological vector space. Let us define $\tilde{S}\eigenvect \coloneqq S\eigenvect - S\eigenvect(x_0)$. It sends continuously $\mathcal{L}_{x_0}$ into itself. As $\mathcal{L}_{x_0}$, is a convex and compact subset of $\mathcal{C}(X)$, we can conclude that $\tilde{S}$ has a fixed point in $\mathcal{L}_{x_0}$ thanks to the generalization of the Schauder fixed point theorem by Tychonoff \cite{Tychonoff_1935}.
	\end{proof}

Note that this result becomes trivial if $d$ is a nonnegative hemi metric, since the null function would be an additive eigenvector with respect to the eigenvalue $0$. 

By the triangular inequality, the map $d(\cdot,x_0):x\mapsto d(x,x_0)$ is in $\Lip$. We define
\[
s_k\coloneqq [S^kd(\cdot,x_0)](x_0) \enspace.
\]
\begin{Proposition}\label{prop-subadd}
We have 
  \begin{align}
  \rho\coloneqq\inf_{k\geq 1} \frac{s_k}k  = \lim_k \frac{s_k}k \in \R
\enspace .\label{e-def-rho}
\end{align}
\end{Proposition}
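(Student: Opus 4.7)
My plan is to show that $(s_k)_{k\geq 1}$ is subadditive, apply Fekete's lemma, and then bound $\rho$ from below using the additive eigenvalue provided by \Cref{prop-nonempty}.

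For subadditivity, I will set $u \coloneqq S^l d(\cdot,x_0)$. By \Cref{prop-contS}, $u$ lies in $\Lip$, so the $1$-Lipschitz condition (recall $\delta_1(x,y)=x-y$) reads $u(x)-u(x_0)\leq d(x,x_0)$ for every $x\in X$; since $u(x_0)=s_l$, this is the functional inequality $u\leq s_l + d(\cdot,x_0)$. The iterate $S^k$ inherits order preservation and commutation with additive constants from $S$, so applying it to this inequality and evaluating at $x_0$ will yield
\[
s_{k+l} \;=\; S^k u(x_0) \;\leq\; S^k d(\cdot,x_0)(x_0) + s_l \;=\; s_k + s_l .
\]

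Fekete's subadditive lemma then gives $\lim_k s_k/k = \inf_k s_k/k$, a priori in $[-\infty, s_1]$; each $s_k$ is finite because $S^k d(\cdot,x_0)$ belongs to $\Lip$ and hence takes real values. To rule out the value $-\infty$, I will invoke \Cref{prop-nonempty} to pick $\lambda\in\R$ and $v\in\Lip$ satisfying $Sv=\lambda + v$. Replacing $v$ by $v-v(x_0)$---which still lies in $\Lip$ and still solves the eigenvalue equation, since $S$ commutes with constants---I may assume $v(x_0)=0$, and the $1$-Lipschitz property then gives $v\leq d(\cdot,x_0)$. Applying $S^k$ to this inequality and exploiting the iterate identity $S^k v = k\lambda + v$ at $x_0$ delivers $k\lambda \leq s_k$ for every $k$, whence $\rho\geq \lambda$ and $\rho\in\R$.

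There is no real obstacle to this strategy; the one conceptual observation is that the $1$-Lipschitz character of the iterates $S^l d(\cdot,x_0)$---preserved along the orbit by \Cref{prop-contS}---together with the order-preserving and constant-commuting structure of $S$, is exactly the structural input needed to transfer the triangle inequality for $d$ into the subadditivity of $(s_k)$ that feeds Fekete's lemma.
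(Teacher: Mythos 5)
Your proof is correct. The subadditivity step and the appeal to Fekete's lemma are exactly the paper's argument. Where you diverge is in ruling out $\rho=-\infty$: the paper does this in an entirely elementary way, setting $\alpha\coloneqq[S(-d(x_0,\cdot))](x_0)\in\R$ (finite since $-d(x_0,\cdot)\in\Lip$ by the triangle inequality), using $d(\cdot,x_0)\geq -d(x_0,\cdot)$ and the $1$-Lipschitz bound $S^kd(\cdot,x_0)\geq s_k-d(x_0,\cdot)$ to run the induction $s_{k+1}\geq s_k+\alpha$, hence $s_k\geq k\alpha$. You instead import the eigenpair $(\lambda,v)$ from \Cref{prop-nonempty}, normalize $v(x_0)=0$ so that $v\leq d(\cdot,x_0)$, and conclude $k\lambda=[S^kv](x_0)\leq s_k$. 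This is legitimate: \Cref{prop-nonempty} is stated and proved before the proposition and its proof (Tychonoff's fixed-point theorem) does not depend on $\rho$, so there is no circularity; your step is in fact the same computation that opens the proof of \Cref{extremal}, specialized to an exact eigenvector. The trade-off is that your lower bound leans on a nonconstructive fixed-point theorem where the paper needs only the triangle inequality and the order-preserving, constant-commuting structure of $S$; the paper's route is the one that would survive in settings where the compactness underlying \Cref{prop-nonempty} is unavailable, while yours is shorter given that the eigenpair is already in hand.
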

\begin{proof}
  We first show the following subadditivity property:
  \begin{align}
    s_{k+l}\leq s_k+ s_l,\text{ for all } k,l\geq 1 \enspace .\label{e-subadd}
	\end{align}
Indeed, we have
	$ S^l d(\cdot,x_0))\leq [S^l d(\cdot,x_0)](x_0)+ d(\cdot,x_0)=s_l + d(\cdot,x_0)$
	since $S^l d(\cdot,x_0)$ is $1$-Lipschitz. Then, since $S^k$
	is order preserving, and commutes with the addition of a constant,
	$S^{k+l}d(\cdot,x_0)
	= S^k(S^l d(\cdot,x_0))\leq S^k(s_l +d(\cdot,x_0))=s_l + S^k(d(\cdot,x_0))$, and so, $[S^{k+l}d(\cdot,x_0)](x_0)\leq s_l+s_k$,
        showing \eqref{e-subadd}.
        Then, it follows from Fekete subadditive lemma 
        that the limit $\lim_k \frac{s_k}k$ does
        exists and that it coincides with $\rho\coloneqq \inf_k \frac{s_k}{k}$.
In particular $\rho\leq s_1<+\infty$. It remains to show that $\rho>-\infty$.

Let $v= - d(x_0, \cdot)$. By the triangular inequality, $v$ is 
in $\Lip$, so that $Sv(x_0)\in \R$. We set
\[
        \alpha\coloneqq[S (-d(x_0,\cdot)](x_0)=\inf_{a\in\mathcal{A}}\sup_{b\in\mathcal{B}} -d(x_0, T_{ab}(x_0))\in \R \enspace .
        \]
Again by the triangular inequality, we have 
$0=d(x_0,x_0)\leq d(x_0,\cdot)+d(\cdot,x_0)$.
We deduce that
        \[
        s_1 = [Sd(\cdot, x_0)](x_0)\geq [S (-d(x_0,\cdot))](x_0) =
        \alpha \enspace .
        \]
        We now show, by induction on $k$, that $s_k \geq \alpha k$ for all $k$.
        Since $S$ preserves $\Lip$, we have, for every $k \in \N^*$,
        $S^k d(\cdot,x_0)\in \Lip$, and so
\begin{equation*}
	[S^kd(\cdot,x_0)](\cdot) \geq 	[S^kd(\cdot,x_0)](x_0) - d(x_0,\cdot) = s_k - d(x_0, \cdot)  \enspace.
\end{equation*}
Since $S$ is order preserving and commutes with the addition of a constant,
we deduce 
\(
  [S^{k+1}d(\cdot,x_0)] \geq s_k +S(-d(x_0,\cdot))
  \).
Evaluating this inequality at point $x_0$, we get $s_{k+1} \geq s_k + \alpha$, which completes the induction.\end{proof}

The case in which there is only one player (Max) is of special interest. Then, the set $\A$ is trivial, being reduced to a single action $a$,
and we set $T_{b}\coloneqq T_{ab}$ for all $b\in \B$. We call such games
{\em minimizer-free}.
The following proposition shows that in this case, the value
of the escape rate game coincides with the {\em asymptotic joint displacement}
introduced by Breuillard and Fujiwara~\cite[Claim~4]{Breuillard2021}:

\begin{Proposition}
  The value of a minimizer-free escape rate game coincides with
  the following quantity:
\begin{align}
  \rho_{\operatorname{disp}} =
      \lim_{k\to\infty} \inf_{x} \sup_{b_1,\dots ,b_k \in \B}
  \frac{d(T_{b_k}\circ \dots\circ T_{b_1}(x),x)}{k} \enspace .
  \end{align}
  \end{Proposition}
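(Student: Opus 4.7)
In the minimizer-free setting, a straightforward induction on $k$ gives the explicit formula $[S^k v](x) = \sup_{b_1,\dots,b_k \in \B} v(T_{b_k}\circ\dots\circ T_{b_1}(x))$. Applying this identity to $v = d(\cdot,x_0)$ and evaluating at $x = x_0$, the sequence from \Cref{prop-subadd} becomes
\[
s_k = [S^k d(\cdot,x_0)](x_0) = \sup_{b_1,\dots,b_k \in \B} d(T_{b_k}\circ\dots\circ T_{b_1}(x_0), x_0) = u_k(x_0),
\]
where I introduce the shorthand $u_k(x) \coloneqq \sup_{b_1,\dots,b_k \in \B} d(T_{b_k}\circ\dots\circ T_{b_1}(x), x)$ for the quantity appearing under the $\inf_x$ in the definition of $\rho_{\operatorname{disp}}$. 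The trivial inequality $\inf_x u_k(x) \leq u_k(x_0) = s_k$ immediately yields $\limsup_{k\to\infty} k^{-1}\inf_x u_k(x) \leq \lim_k s_k/k = \rho$, which is the easy half.

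For the matching lower bound, I invoke \Cref{main}, which provides a function $v \in \Lip$ attaining the maximum in the maximin characterization, so that $Sv(x) - v(x) \geq \rho$ for every $x \in X$. Iterating (using that $S$ is order preserving and commutes with the addition of constants) gives $S^k v \geq v + k\rho$. Since $v$ is $1$-Lipschitz from $(X,d)$ to $(\R,\delta_1)$, one has for every tuple $(b_1,\dots,b_k)$ and every $x$ the inequality $v(T_{b_k}\circ\dots\circ T_{b_1}(x)) - v(x) \leq d(T_{b_k}\circ\dots\circ T_{b_1}(x), x)$; taking the supremum over $(b_1,\dots,b_k)$ on both sides gives $S^k v(x) - v(x) \leq u_k(x)$, so $u_k(x) \geq k\rho$ holds pointwise in $x$. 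Consequently $\inf_x u_k(x) \geq k\rho$ for every $k$.

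Combining the two bounds yields the squeeze $\rho \leq k^{-1}\inf_x u_k(x) \leq s_k/k$, and since $s_k/k \to \rho$ by \Cref{prop-subadd}, the limit defining $\rho_{\operatorname{disp}}$ genuinely exists and equals $\rho$. The one subtle point is that the statement asserts a $\lim$, not merely a $\limsup$ or $\liminf$; this is precisely what the two-sided squeeze handles. The real engine of the proof is the use of a distinguished supereigenvector $v$ with eigenvalue equal to $\rho$ supplied by \Cref{main}: a generic additive eigenvector produced by \Cref{prop-nonempty} need not have eigenvalue $\rho$, so the matching lower bound would only give the weaker estimate $\inf_x u_k(x)/k \geq \lambda$ for some eigenvalue $\lambda \leq \rho$. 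The main obstacle is thus pinning down a supereigenvector at the optimal level, which is exactly the content of the maximin characterization.
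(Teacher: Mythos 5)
Your proof is correct, but it takes a genuinely different route for the nontrivial half. Both arguments begin the same way, by unfolding the minimizer-free Shapley operator to get $s_k=[S^k d(\cdot,x_0)](x_0)=\sup_{b_1,\dots,b_k}d(T_{b_k}\circ\dots\circ T_{b_1}(x_0),x_0)$ and recalling that the value is $\rho=\lim_k s_k/k$. At that point the paper simply cites Claim~4 of Breuillard and Fujiwara for the fact that this limit is basepoint-independent and equals $\rho_{\operatorname{disp}}$, whereas you prove it directly: the upper bound is the trivial $\inf_x u_k(x)\leq u_k(x_0)=s_k$, and the lower bound comes from taking a sub-eigenvector $v\in\Lip$ with $Sv\geq\rho+v$ (whose existence at the optimal level $\rho$ is exactly \Cref{extremal}, quoted inside \Cref{main}), iterating to $S^kv\geq v+k\rho$, and using $1$-Lipschitzness of $v$ to get $u_k(x)\geq S^kv(x)-v(x)\geq k\rho$ pointwise. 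Your remark that a generic eigenvector from \Cref{prop-nonempty} would not suffice is apt. What your version buys is self-containedness (no external citation), the stronger uniform estimate $\inf_x u_k(x)\geq k\rho$ for every finite $k$ rather than only in the limit, and an existence proof for the limit defining $\rho_{\operatorname{disp}}$ by a two-sided squeeze instead of a separate subadditivity argument; what the paper's version buys is brevity and an explicit pointer to the origin of the notion of asymptotic joint displacement.
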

\begin{proof}
In this special case, we have $s_k = \sup_{b_1,\dots ,b_k \in \B}
d(T_{b_k}\circ \dots\circ T_{b_1}(x_0),x_0)$, and $\rho=\lim_k s_k/k$. Moreover
it is observed in~\cite[Claim~4]{Breuillard2021} that for all choices
of $x_0$, $\lim_k s_k/k=\rho_{\operatorname{disp}}$.
\end{proof}
\begin{Remark}
  In our setting, at each turn, \Min\ plays before \Max, leading
  to a Shapley operator of the form $Sv(x)=\min_{a\in\A} \max_{b\in\B}
  v(T_{ab}(x))$. The variant in which \Max\ plays first can be tackled
  by considering instead the operator $S'v(x)=\max_{b\in\B}  \min_{a\in\A} v(T_{ab}(x))$. The results which follow carry over to this case, with trivial modifications.
  \end{Remark}
\section{Maximin and Minimax Characterizations of the Value of Escape Rate Games}
\subsection{Existence and Maximin characterization of the value}

  The following theorem characterizes the quantity $\rho$ defined by \eqref{e-def-rho}   in terms of the \emph{non-linear sub-eigenvalues} of $S$.
\begin{Theorem}
	\label{extremal}
	We have
\[ 
		\rho = \max\{\lambda \in \R \mid \exists \eigenvect \in \Lip, \; \lambda + \eigenvect \leq S\eigenvect \}\enspace .
\] 
\end{Theorem}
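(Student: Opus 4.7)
My plan is to prove two inclusions: that every sub-eigenvalue $\lambda$ of $S$ satisfies $\lambda \leq \rho$, and that $\rho$ itself is attained, i.e. there exists $v^* \in \Lip$ with $\rho + v^* \leq Sv^*$.

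For the easy half, given $v \in \Lip$ and $\lambda \in \R$ with $\lambda + v \leq Sv$, I would iterate, using the order-preservation of $S$ and its commutation with additive constants, to obtain $k\lambda + v \leq S^k v$ for every $k \geq 1$. Since $v \in \Lip$ gives $v(\cdot) \leq v(x_0) + d(\cdot, x_0)$, applying $S^k$ yields $S^k v \leq v(x_0) + S^k d(\cdot, x_0)$ by monotonicity; evaluating at $x_0$ gives $k\lambda \leq s_k$, and passing to the limit via \Cref{prop-subadd} produces $\lambda \leq \rho$.

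The core of the proof is constructing a sub-eigenvector at $\rho$. I would consider the orbit $v_n \coloneqq S^n d(\cdot, x_0) - s_n$ of the renormalized operator $\tilde S u \coloneqq Su - Su(x_0)$ starting from $d(\cdot, x_0)$; this orbit lies in the compact set $\mathcal{L}_{x_0}$ and satisfies the identity $Sv_n = v_{n+1} + \mu_n$ with $\mu_n \coloneqq s_{n+1} - s_n$ bounded in $[\alpha, s_1]$ and with Cesaro mean $\frac{1}{N}\sum_{n=0}^{N-1}\mu_n = s_N/N \to \rho$. By compactness, I would extract a subsequence $n_k$ along which $v_{n_k} \to v^*$ and $v_{n_k + 1} \to w^*$ in $\mathcal{L}_{x_0}$ and $\mu_{n_k} \to \mu^* \geq \rho$ (possible since $\limsup_n \mu_n \geq \rho$). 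Continuity of $S$ (\Cref{prop-contS}) then yields $Sv^* = w^* + \mu^*$, so the desired inequality $Sv^* \geq v^* + \rho$ reduces to $w^* - v^* \geq \rho - \mu^*$.

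The main obstacle is ensuring this last inequality: the instantaneous shifts $\mu_n$ need not converge to $\rho$ although their Cesaro mean does, and $v_{n+1}$ need not converge to the same limit as $v_n$. My plan to overcome this is to use a Banach limit (a shift-invariant positive linear functional) $L$ on $\ell^\infty(\N)$ and set $v^*(x) \coloneqq L(v_n(x))$, which lies in $\mathcal{L}_{x_0} \cap \Lip$; shift-invariance of $L$ forces the effective $w^*$ to equal $v^*$, and $L(\mu_n) = \rho$ by the Cesaro convergence. The remaining delicate step is commuting $L$ past the nonlinear $\inf_a \sup_b$ inside $Sv^*(x) = \inf_a \sup_b v^*(T_{ab}(x))$; this I would handle using \Cref{a-1}, which gives compactness of the action spaces and continuity of $(a,b) \mapsto T_{ab}(x)$, by extracting for each $x$ an almost-optimal minimizing action $a_n^*(x)$ at level $n$ and passing to a sub-limit in the compact set $\mathcal{A}$.
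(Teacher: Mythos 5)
Your first paragraph (every sub-eigenvalue $\lambda$ satisfies $\lambda\leq\rho$) is correct and is exactly the paper's argument. The construction of a sub-eigenvector at level $\rho$, however, has a genuine gap at the step you yourself flag as delicate: commuting the Banach limit $L$ past $\inf_a\sup_b$. Setting $v^*(x)=L\bigl(v_n(x)\bigr)$ and using $Sv_n=v_{n+1}+\mu_n$, shift-invariance and linearity of $L$ reduce your claim to $\inf_a\sup_b L\bigl(v_n(T_{ab}(x))\bigr)\geq L\bigl(\inf_a\sup_b v_n(T_{ab}(x))\bigr)$. The infimum is harmless: positivity of $L$ already gives $L(\inf_a g_n(a))\leq L(g_n(a))$ for each fixed $a$, so the compactness argument you sketch for extracting actions $a_n^*$ addresses the side that was never in danger. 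The obstruction is the supremum: positivity yields $\sup_b L(g_n(b))\leq L(\sup_b g_n(b))$, which is the wrong direction, and the reverse inequality is false for a general positive shift-invariant functional. Take $\mathcal{B}=\{b_1,b_2\}$ with $v_n(T_{b_1}(x))=(-1)^n$ and $v_n(T_{b_2}(x))=(-1)^{n+1}$: then $L$ of each sequence is $0$ while $L$ of their pointwise maximum is $1$. Nothing in your argument excludes such opposite-phase oscillation along the orbit $v_n=S^nd(\cdot,x_0)-s_n$, and equicontinuity in $b$ does not help, since the counterexample needs only two actions. (A secondary, fixable point: Cesaro convergence of $(\mu_n)$ does not force $L(\mu_n)=\rho$ for an arbitrary Banach limit --- that would require almost convergence in Lorentz's sense; you must choose $L$ attaining the upper value $\lim_N\sup_m (s_{m+N}-s_m)/N=\rho$, which does exist.)

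The paper sidesteps this by averaging with a supremum rather than a linear functional: for $\lambda<\rho$ it first shows that on any compact set $C$ there is $k_C$ with $S^{k_C}d(\cdot,x_0)\geq k_C\lambda+d(\cdot,x_0)$ on $C$, then forms $\theta_C^\lambda=\sup_{0\leq j<k_C}\bigl(S^jd(\cdot,x_0)-j\lambda\bigr)$. The only property of $S$ needed to get $S\theta_C^\lambda\geq\lambda+\theta_C^\lambda$ on $C$ is order preservation, which gives $S(\sup_jf_j)\geq\sup_jSf_j$ --- precisely the inequality that fails for $L$. The passage to all of $X$ and to $\lambda=\rho$ is then done by extracting convergent subnets in the compact set $\mathcal{L}_{x_0}$ and using the continuity of $S$ from \Cref{prop-contS}, ingredients you already have in place. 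I would replace the Banach-limit step by this sup-averaging device (or else prove that the orbit admits no opposite-phase oscillations, which seems much harder); as written, the argument does not close.
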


\begin{proof}
	We first observe that if $\lambda\in\R$ is such that $\exists \eigenvect \in \Lip$, $\lambda + v \leq Sv$, then, $k\lambda + v \leq S^kv$, and since $d(\cdot,x_0)\geq v -v(x_0)$,
	$S^kd(\cdot,x_0)\geq S^k (v-v(x_0))=S^kv-v(x_0)$,
	and so $s_k/k\geq ([S^kv](x_0)-v(x_0) )/k$, which entails that
	$\lim_k s_k/k \geq \lambda$. Therefore, $\rho \geq \sup\{\lambda \in \R \mid \exists \eigenvect \in \Lip, \; \lambda + \eigenvect \leq S\eigenvect \}$.

To complete the proof,
we need to construct a function $\eigenvect$ such that $S\eigenvect \geq \rho + \eigenvect$. Let $\lambda < \rho$. 
We first show the following result.
	
	\begin{Lemma}
		For any compact set $C \subset X$, $\exists k_C \in \N$ such that $S^{k_C}d(\cdot,x_0) \geq k_C\lambda + d(\cdot, x_0)$ on $C$.
	\end{Lemma}
	\begin{proof}
	Let $C$ be a compact subset of $X$. As $d(\cdot, x_0) \in \Lip$ and $S$ preserves $\Lip$, $\forall y \in C$, we have
	\begin{align*}
		[S^k d(\cdot, x_0)](y) &\geq [S^k d(\cdot, x_0)](x_0) - d(x_0,y)\\
&= s_k - d(x_0,y) \\ 
		&= \lambda k + d(y, x_0) + (s_k - \lambda k) - (d(y, x_0) + d(x_0,y)) \\
		&\geq \lambda k + d(y, x_0)  + (s_k - \lambda k) - \max_{x \in C}(d(x, x_0)  + d(x_0,x))
	\end{align*}
	Since $\rho - \lambda > 0$ and $\rho=\lim_{k\to \infty} s_k/k$,
there exists $k_C \in \N$ such that $(s_{k_C} - \lambda k_C) - \max_{x \in C}(d(x, x_0)  + d(x_0,x)) \geq 0$, which gives the result.\end{proof}
	
We now finish the proof of \Cref{extremal}. For convenience, we denote $d(\cdot, x_0)$ by $\eigenvectw$.
Let
\begin{align}
  \theta_C^\lambda \coloneqq \sup(\eigenvectw, S\eigenvectw - \lambda, \dots, S^{k_C -1}\eigenvectw - (k_C -1)\lambda) \enspace.
  \label{e-averaging}
\end{align}
As the supremum of nonexpansive functions from $X$ to $\R$, the function $\theta_C^\lambda$ is also nonexpansive. Moreover, for all $x\in C$ we have 
	\begin{align*}
		[S\theta_C^\lambda](x) &\geq \sup([S\eigenvectw](x),\dots,[S^{k_C} \eigenvectw](x) - (k_C-1)\lambda) \\
		&\geq \sup([S\eigenvectw](x),\dots,[S^{k_C -1}\eigenvectw](x) - (k_C -2)\lambda, \eigenvectw(x) + \lambda)= \lambda + \theta_C^\lambda(x)\enspace .
	\end{align*}
        Let $\mathcal{K}$ denote the set of all compact subsets of $X$,
        ordered by inclusion,
        and consider the net $\{C\}_{C\in \mathcal{K}}$.
After translating every function $\theta_C^\lambda$ by $-\theta_C^\lambda(x_0)$, we assume that $\theta_C^\lambda(x_0)=0$ holds for all $C\in \mathcal{K}$.
Then, the net $(\theta_C^\lambda)_{C\in \mathcal{K}}$ is included in $\mathcal{L}_{x_0}$,
which is compact. Hence, there is a subnet $(\theta_C^\lambda)_{C\in \mathcal{D}}$
along which $\theta_C^\lambda $ has a limit, denoted by $\theta^\lambda$,
and this limit belongs to $\mathcal{L}_{x_0}$.
Moreover, for all $x\in X$, 
the singleton $\{x\}$ belongs to $\mathcal{K}$, and then,
by definition of a subnet, there is an element $C_x\in \mathcal{D}$ such that $\{x\}\subset C_x$. It follows that $[S\theta_D^\lambda](x)\geq \lambda + \theta_D^\lambda(x)$ holds
for all $D\in \mathcal{D}$ such that $D\supset C_x$. Since the operator
$S$ is continuous,
taking the limit along the subnet, we deduce that
$[S\theta^\lambda](x) \geq \lambda +\theta^\lambda(x)$ holds for all $x\in X$.

Therefore, for all $\lambda < \rho$ we have constructed a function $\theta^\lambda \in \mathcal{L}_{x_0}$ such that
\[		S\theta^\lambda \geq \lambda + \theta^\lambda\enspace .\]	
Using again the compactness of $\mathcal{L}_{x_0}$, we deduce that the net $\{\theta^\lambda\}_{\lambda<\rho}$, indexed by the set of real numbers smaller than $\rho$, admits a subnet converging to a function $\theta\in \mathcal{L}_{x_0}$.
By continuity of $S$, this function verifies $S\theta \geq \rho + \theta$. Therefore, $\rho \leq \max \{\lambda \in \R \mid \exists v \in \Lip, \; \lambda + v \leq Sv\}$. This shows the equality in \Cref{extremal}.
\end{proof}

We will be interested with the existence of a {\em uniform value} as defined by Mertens and Neyman~\cite{mertens_neyman}, which reinforces the notion
of value of a game with mean-payoff objective.

\begin{Definition}[Uniform Value]\label{def-uniform}
	A zero-sum repeated game is said to have a \emph{uniform value} $v_{\infty}$ if both players can guarantee $v_{\infty}$ up to an arbitrary precision provided that they play for a sufficiently long time. Formally $v_{\infty}$ is the uniform value of the game if for all $\epsilon > 0$, there exist a pair of strategies $(\sigma_{\epsilon}, \tau_{\epsilon})$ and a time $N$ such that for all $k \geq N$ and for all pairs of strategies $(\sigma, \tau)$,
	\begin{align*}
		\frac{\payoff_k(\sigma_{\epsilon},\tau)}k \leq v_{\infty} + \epsilon \qquad\qquad
		\frac{\payoff_k(\sigma, \tau_{\epsilon})}k \geq v_{\infty}- \epsilon
                \enspace .
	\end{align*}
\end{Definition}
This implies that the non zero-sum variant of the game,
where \Min still wishes to minimize $J(\sigma, \tau)$ but \Max wishes to maximize \[
\payoff^-(\sigma,\tau) \coloneqq \liminf_{k\to \infty}\payoff_k(a_1b_1 \dots a_k b_k)/k \enspace ,\]
instead of the limsup, still has the value $v_\infty$, meaning that:
	\begin{align*}
		v_\infty \geq \inf_{\sigma}\sup_{\tau} \limsup_k\payoff_k(\sigma,\tau)/k \geq
		\sup_{\tau}\inf_{\sigma}  \liminf_k\payoff_k(\sigma,\tau)/k \geq
		v_\infty \enspace .
	\end{align*}
See  \cite{mertens_neyman} for more information on the uniform value; see also~\cite{bolte2013}.

We also introduce the notion of \emph{$q$-cyclic} strategy for player \Min.
In such a strategy, \Min chooses a strategy for the game $\Gamma_q$ and repeats it indefinitely. Formally, we construct it inductively: at every $mq + 1$ turn, $m\in \N$, player \Min forgets
the sequence of actions
and states played during the $mq$ first turns and then plays the game $\Gamma_q$ as if it were starting from $x_0$ again. This means
that for $m \in  \N, \ell = 1,\dots,q$, the sequence of actions and states \Min considers to choose their action at turn $mq + \ell$ is 
\begin{align*}
  (x_0, a_{mq + 1}, b_{mq + 1}, &T_{a_{mq + 1}b_{mq + 1}}(x_0), \dots \\
  & a_{mq + \ell - 1}, b_{mq + \ell - 1}, T_{a_{mq + \ell - 1}b_{mq + \ell - 1}}\circ \dots \circ T_{a_{mq + 1}b_{mq + 1}}(x_0))
\end{align*}
Note that the infinite
sequence of actions played by Min in such a $q$-cyclic strategy will not be periodic
unless player \Max also selects actions in a cyclic way. We now give, and prove, a more detailed statement of \Cref{main}

\begin{Theorem}\label{th-valueexists}
  The escape rate game has a uniform value, which coincides with $\rho$. Furthermore, given a
  sub-eigenvector $v$ associated to $\rho$, player \Max possesses a stationary optimal strategy obtained
  by maximizing $b \mapsto v(T_{a_{k+1}b}(x_k))$ at turn $k+1$.
Player \Min possesses a history dependent
    optimal strategy, and for all $\epsilon>0$, player \Min admits an $\epsilon$-optimal strategy that is $q$-cyclic for some $q$.
\end{Theorem}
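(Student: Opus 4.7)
The plan is to prove the three claims by constructing each strategy explicitly and then combining them to obtain the uniform value. Player \Max receives a stationary optimal strategy directly from the sub-eigenvector supplied by \Cref{extremal}; player \Min receives a $q$-cyclic $\epsilon$-optimal strategy from the subadditivity of $(s_k)$, and a diagonal concatenation of such strategies yields a single history-dependent optimal strategy.

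For the \Max side, take $v \in \Lip$ with $Sv \geq \rho + v$ from \Cref{extremal}. By \Cref{a-1}, the supremum in $Sv(x) = \inf_a \sup_b v(T_{ab}(x))$ is attained, so the stationary strategy $\tau^*$ picking $b_{k+1} \in \argmax_{b \in \B} v(T_{a_{k+1} b}(x_k))$ is well defined. Against any $\sigma$, $v(x_{k+1}) = \sup_b v(T_{a_{k+1} b}(x_k)) \geq Sv(x_k) \geq v(x_k) + \rho$, so by induction $v(x_k) \geq v(x_0) + k\rho$. Since $v \in \Lip$, $\payoff_k(\sigma, \tau^*) = d(x_k, x_0) \geq v(x_k) - v(x_0) \geq k\rho$, showing that $\tau^*$ guarantees $\rho$ for every $k$ and every $\sigma$.

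For the \Min side, fix $\epsilon > 0$ and, using \Cref{prop-subadd}, pick $q$ with $s_q/q \leq \rho + \epsilon/2$. A backward induction based on the iterated attainment of infima/suprema in $S$ (ensured by \Cref{a-1}) produces an optimal strategy $\sigma_q$ for \Min in $\Gamma_q$, satisfying $\payoff_q(\sigma_q, \tau) \leq s_q$ for all $\tau$. Let $\sigma^{(q)}$ be the associated $q$-cyclic strategy. Within cycle $m$, compare the actual trajectory $y_{mq+\ell} = \phi_{m,\ell}(y_{mq})$ with the virtual one $\tilde y_{mq+\ell} = \phi_{m,\ell}(x_0)$, where $\phi_{m,\ell}$ is the composition of the first $\ell$ maps played in cycle $m$. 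Because \Min's actions in cycle $m$ depend only on the pretended history, they are exactly those $\sigma_q$ would play against the \Max-actions $(b_{mq+1}, \ldots, b_{(m+1)q})$; optimality of $\sigma_q$ then forces $d(\tilde y_{(m+1)q}, x_0) \leq s_q$. Nonexpansiveness of $\phi_m$ gives $d(y_{(m+1)q}, \tilde y_{(m+1)q}) \leq d(y_{mq}, x_0)$, so by the triangle inequality $d(y_{(m+1)q}, x_0) \leq d(y_{mq}, x_0) + s_q$ and, by induction, $d(y_{Nq}, x_0) \leq N s_q$. At intermediate turns $k = Nq + r$ with $0 \leq r < q$, the same coupling yields $d(y_k, x_0) \leq N s_q + M_q$, where $M_q$ is the finite (by iterative application of \Cref{a-1}) maximum of $d(\cdot, x_0)$ over the compact set of states reachable from $x_0$ in at most $q$ steps. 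Hence $\payoff_k(\sigma^{(q)}, \tau)/k \leq s_q/q + M_q/k \leq \rho + \epsilon$ for $k \geq N_\epsilon$, uniformly in $\tau$; together with the \Max bound, this shows the uniform value exists and equals $\rho$.

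A single history-dependent optimal strategy for \Min is built by concatenation: choose $(q_n)$ with $s_{q_n}/q_n \leq \rho + 1/n$ and phase lengths $L_n$ growing fast enough, e.g.\ $L_n = 2^n q_n$; during phase $n$ (turns $T_{n-1} + 1, \ldots, T_n$ with $T_n = \sum_{i \leq n} L_i$), apply the $q_n$-cyclic strategy with the restart convention. The same coupling gives $d(y_{T_n}, x_0) \leq d(y_{T_{n-1}}, x_0) + L_n(\rho + 1/n) + M_{q_n}$, which telescopes to $d(y_{T_N}, x_0) \leq \rho T_N + \sum_{n \leq N}(L_n/n + M_{q_n})$; dividing by $T_N$ and using the fast growth of $L_n$ yields $\limsup_k \payoff_k(\sigma^*, \tau)/k \leq \rho$ uniformly in $\tau$. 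The main obstacle throughout is that \Cref{extremal} provides only a sub-eigenvector of $S$, so there is no Lyapunov function immediately available for the \Min side (a super-eigenvector is only obtained under the stronger almost-isometry hypothesis of \Cref{th-dual}); the actual-vs-virtual trajectory coupling, together with the compactness bound $M_q$ coming from \Cref{a-1}, is what replaces it.
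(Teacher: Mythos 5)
Your treatment of the first two claims is essentially the paper's own argument. The \Max side (stationary argmax strategy driven by the sub-eigenvector, inducting $v(x_k)\geq v(x_0)+k\rho$ and then using $v\in\Lip$) is identical. Your ``actual vs.\ virtual trajectory coupling'' for the $q$-cyclic strategy is the same computation the paper performs by writing $\widetilde T_{w_m}\circ\dots\circ\widetilde T_{w_1}$ and peeling off one block at a time via the triangle inequality and nonexpansiveness; your $M_q$ plays the role of the paper's $\ell M$ bound for incomplete cycles (modulo some sign bookkeeping when $s_q<0$, which the paper handles with $M^+$ and $\rho^-$ and you elide — a minor point).

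The construction of a single optimal strategy for \Min is where you genuinely diverge, and where there is a gap. You run phases of length $L_n=2^nq_n$, each repeating the $q_n$-cyclic strategy, and you bound $d(y_{T_N},x_0)$ only at the phase boundaries $T_N$ before concluding $\limsup_k \payoff_k(\sigma^*,\tau)/k\leq\rho$. But both the $\limsup$ payoff and the definition of uniform value require control of $d(y_k,x_0)/k$ at \emph{every} large $k$, including times inside a phase. At a time $k=T_{n-1}+r$ during the first (still incomplete) cycle of phase $n$, the only available bound on the excursion is of order $rM^+\leq q_nM^+$, while $k$ is only of order $T_{n-1}$. Since $q_n$ is dictated by the (possibly arbitrarily slow) convergence $s_k/k\to\rho$, nothing in the choice $L_n=2^nq_n$ prevents $q_n\gg T_{n-1}=\sum_{i<n}2^iq_i$, in which case $d(y_k,x_0)/k$ can be of order $M^+$ rather than $\rho+o(1)$ at infinitely many times. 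The flaw is that your phase lengths are calibrated to the \emph{current} $q_n$ rather than the \emph{next} one; it is fixable by choosing $L_{n-1}$ only after $q_n$ is known (e.g.\ $L_{n-1}\geq n\,q_n(1+M^++|\rho|)$), or by adopting the paper's schedule of cycles of lengths $1,2,3,\dots$, for which the current cycle length $q$ is automatically negligible against the elapsed time $1+\dots+q$, and the convergence $\frac{s_1+\dots+s_q}{1+\dots+q}\to\rho$ follows from the weighted Ces\`aro theorem. As written, your third claim is not proved.
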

\begin{proof}
  We first show the existence of the uniform value.
We know from \Cref{extremal} that there exists $v\in\Lip$ such that $\rho +v \leq Sv$.
Recall that any pair of strategies $(\sigma,\tau)$ of the two players \Min and \Max defines a sequence $(a_k,b_k)_{k\geq 0}$ of actions and that, starting from $x_0$,
the sequence of states of the game is defined inductively by
\( x_{\ell}= T_{a_\ell b_\ell}(x_{\ell-1})\), for $\ell\geq 1$.

	Let $\sigma$ be an arbitrary strategy of player \Min.
	If $a_1,b_1,\dots,a_{k-1},b_{k-1}$ is the sequence of actions played up to stage $k-1$,
the state $x_{k-1}$ of the game is given by 
 	\[ x_{k-1} \coloneqq T_{a_{k-1}b_{k-1}}\circ \dots \circ T_{a_1b_1}(x_0) \enspace,	\]
and $a_k$ is determined by $\sigma$ and the history.
	We then define the strategy $\tau^*$ of player \Max, as follows:
at step $k$, we select any action 
	\[
	b_{k} \in \argmax_{b \in \mathcal{B}} v(T_{a_{k}b}(x_{k - 1})) \enspace .
	\]
This is a stationary strategy as it depends only on the current state and the last action of \Min.
We get that the sequence of states generated by the strategies $\sigma$ and $\tau^*$
	satisfies 
        \begin{align*}
          Sv(x_{k-1})&= \min_{a\in \A}\max_{b\in \B} v(T_{ab}(x_{k - 1}))\\
&     
\leq \max_{b\in \B} v(T_{a_{k}b}(x_{k - 1}))=  v(T_{a_{k}b_k}(x_{k - 1}))= v(x_k)\enspace .
\end{align*}
	Since		$\rho + v \leq Sv$, 
	we deduce that $ \rho + v(x_{k-1})\leq v(x_k)$, for all $k\geq 1$,
and so
	\begin{align*}
		\rho \leq \frac{v(x_k) -v(x_0) }{k} \leq \frac{d(x_k,x_0)}{k}
\enspace .
	\end{align*}
        It follows that for all $k$, and for all strategies $\sigma$ of \Min,
        we have $\rho\leq J_k(\sigma,\tau^*)$, showing that the second inequality required by the notion of uniform value, as per Definition~\ref{def-uniform},
        is satisfied
        for any $\epsilon>0$, with a choice of $\tau_\epsilon=\tau^*$ independent
        of $\epsilon$ and with $N=1$.
	
	Now, to show the first inequality in Definition~\ref{def-uniform},
        we shall construct for all $\epsilon > 0$ a
        $q$-cyclic $\epsilon$-optimal strategy for Min. Let $\epsilon > 0$, as $\rho = \inf_{k \in \N^*}\frac{s_k}{k}$, there exists $q \in \N^*$ such that $\frac{s_q}{q} \leq \rho + \epsilon/2$, the value of the game in horizon $q$ exists and is equal to $s_q$.  Let $\sigma_q$ be an optimal strategy of \Min for the game $\Gamma_q$. We define $\sigma_q^{\infty}$ as the $q$-cyclic strategy of \Min which consists of applying repeatedly $\sigma_q$. Let $\tau \in \strategyMax$.
As usual, we denote $(a_i, b_i)_{i \in \N^*}$ the sequence of actions induced by the strategies $\sigma_q^{\infty}$ and $\tau$ and for all $m \in \N$, we define the operator played from turn $(m-1)q + 1$ to turn $mq$, $\widetilde{T}_{w_m} \coloneqq T_{a_{mq}b_{mq}} \circ \dots \circ T_{a_{(m-1)q +1}b_{(m-1)q +1}}$. As $\sigma_q$ is an optimal strategy for the game $\Gamma_q$ we have for all $m \in \N$
\[
	d(\widetilde{T}_{w_m}(x_0),x_0) \leq s_q
\]
So using the properties of $d$ and the nonexpansiveness of the operators we get
	\begin{align*}
 \payoff_{mq}(\sigma_q^{\infty}, \tau) &= d(\widetilde{T}_{w_m}\circ \dots \circ \widetilde{T}_{w_1}(x_0),
x_0)\\
		&\leq d(\widetilde{T}_{w_m}\circ \dots \circ \widetilde{T}_{w_1}(x_0), \widetilde{T}_{w_m}(x_0)) 
+d(\widetilde{T}_{w_m}(x_0),x_0) \\
		&\leq  d(\widetilde{T}_{w_{m-1}}\circ \dots \circ \widetilde{T}_{w_1}(x_0), x_0) 
+d(\widetilde{T}_{w_m}(x_0),x_0)\\
		&
		  \qquad \qquad \qquad \qquad \qquad  \vdots\\
		&\leq d(\widetilde{T}_{w_m}(x_0),x_0) + \dots + d(\widetilde{T}_{w_1}(x_0),  x_0) \leq m s_q\leq mq(\rho + \epsilon/2)\enspace .
	\end{align*}
We set \begin{align}
  M\coloneqq\max_{(a,b)\in A\times B} d(T_{ab}(x_0),x_0)
  \enspace ,\label{e-def-M}
\end{align}
which is finite by \Cref{a-1}.

By the same arguments as above, we have for $0\leq \ell<q$, and $k=mq+\ell$,
\begin{align}
	\payoff_{k}(\sigma_q^{\infty}, \tau) &\leq
d(T_{a_{mq+\ell}b_{mq+\ell}} \circ \dots \circ T_{a_{mq +1}b_{mq +1}}(x_0),x_0) +
 m s_q\nonumber\\
&\leq d(T_{a_{mq+\ell}b_{mq+\ell}} (x_0),x_0)+\dots +d(T_{a_{mq +1}b_{mq +1}}(x_0),x_0) +
 m s_q\nonumber\\
 &\leq \ell M + m s_q \leq \ell M + mq(\rho+ \frac{\epsilon}2)\nonumber\\
 & = \ell (M-\rho - \frac{\epsilon}{2})  + k (\rho +\frac{\epsilon}2)
 \leq (q-1)(M^+-\rho^- -\frac{\epsilon}{2}) + k (\rho +\frac{\epsilon}2)
 \enspace ,\label{e-newb}
\end{align}
where $a^+\coloneqq\max(a,0)$ and $a^-\coloneqq\min(a,0)$.
Taking $N$ such that $(q-1)(M^+- \rho^- - \epsilon/2)/N\leq \epsilon/2$, we deduce
that for all $k\geq  \max(q,N)$, and for all strategies $\tau$ of \Max,
$J_k(\sigma_q^\infty,\tau)/k \leq \rho + \epsilon$, completing
the proof that $\rho$ is the uniform value of the game.
We also showed that \Max\ possesses a stationary optimal strategy,
whereas \Min\ possesses cyclic $\epsilon$-optimal strategies
for all $\epsilon>0$.

Now, to construct an optimal strategy for \Min, we follow the construction of a cyclic strategy but instead of keeping a fixed period, we increase it by $1$ every time.
This means that for the first turn \Min plays an optimal strategy associated with $\Gamma_{1}$. Then, he forgets the actions played so far (the other player can remember them) and plays an optimal strategy for the game $\Gamma_2$. Then, he forgets the first turn, and for the $2$ following turns, plays an optimal strategy associated with $\Gamma_{2}$. He repeats this operation, i.e., after the $1 + \dots + q$ first turns, he forgets the past and plays an optimal strategy associated with $\Gamma_{q+1}$. We denote by $\sigma^*$ this strategy of \Min.

By the same arguments as above (see the derivation of~\eqref{e-newb}),
we obtain 
for every $q \in \N^*$, $0\leq \ell\leq q$, and $\tau \in \strategyMax$ 
the following property:
	\begin{equation*}
		\payoff_{1 + \dots + q+\ell}(\sigma^*,\tau)
\leq \ell M + s_1 + \dots + s_q \enspace .
	\end{equation*}
Dividing by $1 + \dots + q+\ell$, we deduce, that for all $q\geq 1$,
	\begin{equation*}
		\frac{\payoff_{1 + \dots + q+\ell}(\sigma^*,\tau)}{1 + \dots + q+\ell} \leq \frac{q  M^+ + s_1 + \dots + s_q}{1 + \dots + q}
                = \frac{2 M^+}{q+1} + \frac{s_1 + \dots + s_q}{1+ \dots +q}
                \enspace .
	\end{equation*}
        Observe that any integer $k\geq 1$
        can be written in a unique way as $1+2+\dots +q +\ell $
        with $q\geq 1$ and $0\leq \ell \leq q$. We denote by $q_k$ the unique
        integer $q$ arising in this decomposition. 
        Then, for all $k \in \N^*$ and $\tau \in \strategyMax$,
        \[
        \frac{J_k(\sigma^*,\tau)}k \leq \frac{2M^+}{q_k + 1} + \frac{s_1 + \dots + s_{q_k}}{1 + \dots + q_k}
        \]
        As $k$ goes to $+ \infty$, so does $q_k$.
        Moreover, since $s_k/k$ tends to $\rho$ as $k$ tends to infinity,
the weighted Cesaro sum $(s_1+\dots+s_k)/(1+2+\dots +k)$ tends to the same limit
(see Theorem~43, p100 of~\cite{hardy}).
        Then, for all $\epsilon>0$,
        we can find an integer $N$ such that for all $k\geq N$,  $\frac{s_1 + \dots + s_{q_k}}{1 + \dots + q_k}\leq \rho + \epsilon/2$, and $\frac{2M^+}{q_k + 1}\leq \epsilon/2$, so that $J_k(\sigma^*,\tau)/k \leq \rho + \epsilon$,
showing that $\sigma^*$ is an optimal policy of \Min\ in the
escape rate game.
\end{proof}

We have the direct corollaries for vector addition games, conical games, and matrix multiplication games.
\begin{Corollary}
	Vector addition games have a uniform value.
\end{Corollary}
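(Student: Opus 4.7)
The plan is to simply verify that vector addition games fit the framework of Theorem \ref{th-valueexists}, and then quote that theorem. Recall that in a vector addition game, $X = \R^n$ equipped with a norm-induced metric $d$, the action sets $\mathcal{A}, \mathcal{B} \subset \R^n$ are compact, and each map $T_{ab}(x) = x + a + b$ is a translation, hence an isometry and in particular nonexpansive.

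First, I would check Assumption \ref{a-1}. The map $(a,b,x) \mapsto T_{ab}(x) = x + a + b$ is jointly continuous on $\mathcal{A} \times \mathcal{B} \times \R^n$, so the separate continuity in $a$ and in $b$ is immediate. Moreover, for any compact set $K \subset \R^n$, the set $\{T_{ab}(x) \mid (a,b,x) \in \mathcal{A}\times\mathcal{B}\times K\} = K + \mathcal{A} + \mathcal{B}$ is the continuous image of the compact product $\mathcal{A}\times\mathcal{B}\times K$ under vector addition, hence compact.

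Once Assumption \ref{a-1} is verified, Theorem \ref{th-valueexists} applies directly and yields the existence of the uniform value $\rho$ of the escape rate game, completing the proof. There is no hard step: the corollary is essentially an observation that the translation dynamics is a well-behaved instance of the general setting.
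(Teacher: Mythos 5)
Your proof is correct and matches the paper's (implicit) argument exactly: the corollary is stated as a direct consequence of Theorem~\ref{th-valueexists}, with Assumption~\ref{a-1} holding because $(a,b,x)\mapsto x+a+b$ is jointly continuous, just as you verify. Nothing is missing.
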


\begin{Corollary}
	Conical escape rate games have a uniform value.
In particular, matrix multiplication games with matrices that are either invertible or preserve the interior of
  a proper cone
  have a uniform value.
\end{Corollary}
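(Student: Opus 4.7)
The plan is to reduce each statement to a direct application of \Cref{th-valueexists}, most of whose hypotheses have been baked into the definitions collected in \Cref{sec-funk}. For the first assertion, I would note that $\Funk$ is a hemi-metric on $\Int C$, since the interior of a proper cone is a single part of $C$; that the maps $T_{ab}$ defining a conical escape rate game are $\Funk$-nonexpansive, this being precisely the order-preserving and positively homogeneous condition recalled in \Cref{sec-funk}; and that \Cref{a-1} is the standing assumption throughout. Applying \Cref{th-valueexists} then yields at once the existence of the uniform value $\rho$.

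For the matrix multiplication games I would split into the two cases of the statement. When $\mathcal{A}, \mathcal{B} \subset \GL_n(\C)$, I take the cone $C = \herm_n^+$ and the congruence operators $T_{AB}(X) \coloneqq B^* A^* X A B$, which are $\Funk$-isometries on $\Int \herm_n^+$ by \Cref{Inv-m}; the map $(A,B,X) \mapsto T_{AB}(X)$ is polynomial, so \Cref{a-1} holds automatically, and the image of any compact set under this map is compact. The identity
\[
\log \|A_1 B_1 \dots A_k B_k\| = \tfrac{1}{2}\Funk(T_{A_k B_k}\circ\dots\circ T_{A_1 B_1}(I), I),
\]
recorded in the introduction, then transfers the uniform value from the conical game to the matrix multiplication game. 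When $\mathcal{A}, \mathcal{B}$ consist of matrices preserving $\Int C$ for some proper cone $C$, I would set $T_{AB}(x) \coloneqq x A B$, acting on row vectors as in \Cref{Non-m}; since $\End(C)$ is closed under composition, $T_{AB}$ is a linear self-map of $\Int C$ that is $\Funk$-nonexpansive, \Cref{a-1} is again immediate, and the computation of \Cref{Non-m} relates the conical escape rate to the matrix multiplication payoff.

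There is essentially no genuine obstacle: the entire content of this corollary is organizational, matching each concrete example to the abstract framework already established. The only point requiring mild attention is verifying that the $T_{AB}$ send $\Int C$ into itself rather than merely into its closure---which is precisely where invertibility (respectively, the $\Int C$-preservation hypothesis) enters, ensuring that the escape rate game is played on the hemi-metric space $(\Int C, \Funk)$ and not just on $C$.
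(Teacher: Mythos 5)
Your proposal is correct and follows essentially the same route as the paper, which states this as a direct corollary of \Cref{th-valueexists} combined with the reductions already set up in Section~\ref{sec-funk} and Examples~\ref{Non-m} and~\ref{Inv-m}. The points you single out (Funk is a hemi-metric on the single part $\Int C$, \Cref{a-1} holds since the maps are continuous and the Thompson and norm topologies agree on $\Int C$, and the payoff identities transfer the value) are exactly the verifications the paper relies on implicitly.
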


\subsection{Distance-like functions and minimax characterization of the value}

Recall that we say 
a $1$-Lipschitz function $v: X\to \R$ is {\em distance-like} if, for some
$x_0\in X$, there exists a constant $\alpha\in \R$ such that
\[
v(x) \geq \alpha+ d(x,x_0), \quad \forall x\in  X
 \enspace .
 \]
 Distance-like functions allow one to bound from above
the value of an escape rate game.
\begin{Proposition}
	\label{dlike}
  Suppose that there exists a distance-like function $v$ such that
  \[
  S v\leq \lambda + v \enspace ,
  \]
  for some $\lambda\in \R$. Then, the value $\rho$ of the escape rate game satisfies $\rho \leq \lambda$.
  Moreover, by selecting at state $x$ an action $a \in \argmin_{a \in \A}\sup_{b \in \B}v(T_{ab}(x))$, we obtain a stationary
  strategy for player \Min which guarantees $\lambda$.
\end{Proposition}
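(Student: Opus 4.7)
The plan is to construct an explicit stationary strategy for \Min\ from the hypothesis and estimate the payoff it guarantees against any strategy of \Max.

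First I would define the stationary strategy $\sigma^\dagger$ for \Min\ by $\sigma^\dagger(x) \in \argmin_{a\in\A}\sup_{b\in\B} v(T_{ab}(x))$, noting that this argmin is non-empty because, under \Cref{a-1}, the map $a\mapsto\sup_{b\in\B}v(T_{ab}(x))$ is lower semicontinuous on the compact set $\A$ (since $b\mapsto v(T_{ab}(x))$ is continuous on compact $\B$ and the supremum of continuous functions parametrized continuously in $a$ is l.s.c.\ in $a$). This is standard measurable-selection/compactness material which I would dispatch quickly.

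Next I would carry out the fundamental telescoping estimate. Fix an arbitrary strategy $\tau$ of \Max, let $(a_k,b_k)_{k\geq 1}$ be the sequence of actions induced by $(\sigma^\dagger,\tau)$ and $(x_k)_{k\geq 0}$ the corresponding trajectory. By construction of $\sigma^\dagger$, for every $k\geq 1$,
\[ v(x_k) = v(T_{a_kb_k}(x_{k-1})) \leq \sup_{b\in\B} v(T_{a_kb}(x_{k-1})) = \inf_{a\in\A}\sup_{b\in\B} v(T_{ab}(x_{k-1})) = Sv(x_{k-1}) \leq \lambda + v(x_{k-1}), \]
using the hypothesis $Sv\leq \lambda+v$ in the last step. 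Iterating gives $v(x_k)\leq v(x_0)+k\lambda$.

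Finally I would invoke the distance-like property. Let $\alpha\in\R$ be such that $v(y)\geq d(y,x_0)+\alpha$ for all $y\in X$. Applied to $y=x_k$, combined with the previous bound, this yields
\[ \payoff_k(\sigma^\dagger,\tau) = d(x_k,x_0) \leq v(x_k) - \alpha \leq k\lambda + v(x_0) - \alpha. \]
Dividing by $k$ and letting $k\to\infty$ gives $\payoff_\infty(\sigma^\dagger,\tau)\leq\lambda$ (independently of $\tau$), and, sharper, for every $\epsilon>0$ there is $N$ such that $\payoff_k(\sigma^\dagger,\tau)/k\leq\lambda+\epsilon$ for all $k\geq N$ and all $\tau$. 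In particular $\sigma^\dagger$ guarantees $\lambda$ in the sense required by the proposition, so $\rho\leq\lambda$. There is no serious obstacle here; the only subtlety worth mentioning is why the $\argmin$ defining $\sigma^\dagger$ is non-empty, and this is immediate from the continuity/compactness content of \Cref{a-1}.
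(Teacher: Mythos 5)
Your proof is correct and follows essentially the same route as the paper's: the same telescoping estimate $v(x_k)\leq Sv(x_{k-1})\leq \lambda+v(x_{k-1})$ combined with the distance-like lower bound $d(x_k,x_0)+\alpha\leq v(x_k)$. The only addition is your (correct, but routine) remark on why the $\argmin$ is attained, which the paper handles once and for all when discussing \Cref{a-1}.
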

\begin{proof}
  By using this strategy, we get, by a immediate induction, that for any sequence of actions $(b_k)_{k \in \N^*}$ played by Max,
setting $  x_k = T_{a_kb_k} \circ\dots\circ T_{a_1 b_1}(x_0)$, 
  \begin{align*}
d(x_k,x_0) + \alpha &\leq   v(x_k) 
= v(T_{a_kb_k} (x_{k-1}))
\leq \sup_{b\in\B} v(T_{a_kb} (x_{k-1})) = Sv(x_{k-1})\\
&\leq v(x_{k-1}) + \lambda \leq\cdots  
\leq v(x_0) +k \lambda \enspace ,   \end{align*}
  showing that player \Min can guarantee $\lambda$. Hence, the value of the
  escape rate game satisfies $\rho\leq \lambda$.
  \end{proof}

Coupling \Cref{dlike} with \Cref{main} we get the following result.
\begin{Corollary}\label{prop-dlike}
	Suppose that there exists a distance-like function $v$ and $\lambda \in \R$ such that
	\[
	Sv = \lambda + v \enspace ,
	\]
	then, $\lambda$ is the value of the game. 
\end{Corollary}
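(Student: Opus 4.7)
The plan is simply to split the eigenvalue equation $Sv = \lambda + v$ into the two one-sided inequalities $Sv \leq \lambda + v$ and $Sv \geq \lambda + v$, and to feed each one into a result that has already been established.

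First I would observe that by the definition given just before Proposition~\ref{dlike}, every distance-like function is in particular in $\Lip$, so $v$ is a legitimate input for both of the characterizations we wish to invoke. From $Sv = \lambda + v$ we extract $Sv \leq \lambda + v$; since $v$ is distance-like, Proposition~\ref{dlike} applies and yields the upper bound $\rho \leq \lambda$, together with a stationary strategy of \Min\ guaranteeing $\lambda$.

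For the matching lower bound, I would use the other half of the equality, $\lambda + v \leq Sv$. This exhibits $\lambda$ as an element of the set $\{\mu\in\R\mid\exists w\in\Lip,\ \mu + w \leq Sw\}$, whose maximum is $\rho$ by Theorem~\ref{extremal}. Hence $\lambda \leq \rho$. Combining both inequalities gives $\rho = \lambda$.

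There is no real obstacle here since the work was done in Proposition~\ref{dlike} and Theorem~\ref{extremal}; the only point to be slightly careful about is checking that the hypothesis ``distance-like'' is compatible with the hypothesis ``$v\in\Lip$'' needed by Theorem~\ref{extremal}, but this is built into the definition of distance-like function. One may also remark, as a bonus, that the strategy for \Min\ extracted from Proposition~\ref{dlike} is then optimal, and combined with the stationary optimal strategy for \Max\ provided by Theorem~\ref{th-valueexists} (via the sub-eigenvector $v$), both players possess stationary optimal strategies in this situation.
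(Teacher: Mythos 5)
Your proof is correct and matches the paper's own argument, which simply couples Proposition~\ref{dlike} (giving $\rho\leq\lambda$ from $Sv\leq\lambda+v$ with $v$ distance-like) with the maximin characterization of Theorem~\ref{main} (giving $\lambda\leq\rho$ from $\lambda+v\leq Sv$ with $v\in\Lip$). Your observation that distance-like functions are by definition in $\Lip$, and the bonus remark on stationary optimal strategies for both players, are both accurate.
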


We shall say that a nonexpansive map $T: X\to X$ is an {\em almost-isometry}
if for some $\gamma \geq 0$,
 \begin{align}\label{e-almost}
 d(T(x),T(y))\geq d(x,y) - \gamma \enspace , \forall x,y\in X \enspace .
 \end{align}
 In particular, an isometry is an almost-isometry (with $\gamma=0$). We shall
 say that the nonexpansive maps $T_{ab}$ with $(a,b)\in A\times B$ constitute
 a {\em uniform family of almost-isometries} if every map $T=T_{ab}$ satisfies \eqref{e-almost}
 for some constant $\gamma\geq 0$ independent of the choice of $(a,b)\in A\times B$.

 \begin{Proposition}\label{prop-preserve}
   Suppose that the maps $T_{ab}$ with $(a,b)\in A\times B$ constitute a uniform family
   of almost-isometries. Then, the Shapley operator $S$ preserve the space of distance-like
   functions $\mathscr{D}$.
 \end{Proposition}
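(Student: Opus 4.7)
The plan is to start from an arbitrary $v \in \mathscr{D}$ and show that $Sv$ again lies in $\mathscr{D}$. By Proposition~\ref{prop-contS} we already know that $S$ preserves $\Lip$, so only the existence of a constant $\alpha'\in\R$ with $Sv(x)\ge \alpha'+d(x,x_0)$ needs to be established. Writing $v(y)\ge \alpha+d(y,x_0)$ for all $y\in X$, the monotonicity of the $\inf$--$\sup$ in the definition of $S$ gives
\[
Sv(x)=\inf_{a\in\A}\sup_{b\in\B}v(T_{ab}(x))\ge \alpha+\inf_{a\in\A}\sup_{b\in\B}d(T_{ab}(x),x_0) \enspace .
\]
So the problem reduces to bounding $d(T_{ab}(x),x_0)$ below by $d(x,x_0)$ up to a constant that is uniform in $(a,b)$.

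Next I would exploit the almost-isometry inequality $d(T_{ab}(x),T_{ab}(x_0))\ge d(x,x_0)-\gamma$ together with the triangular inequality, written in the form $d(T_{ab}(x),T_{ab}(x_0))\le d(T_{ab}(x),x_0)+d(x_0,T_{ab}(x_0))$. Combining these two gives
\[
d(T_{ab}(x),x_0)\;\ge\; d(x,x_0)-\gamma-d(x_0,T_{ab}(x_0))\enspace .
\]
Hence the desired lower bound will follow as soon as $d(x_0,T_{ab}(x_0))$ can be bounded uniformly in $(a,b)\in\A\times\B$.

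This uniform boundedness is the only place where additional input is required, and it follows from Assumption~\ref{a-1}: taking $K=\{x_0\}$, the set $\{T_{ab}(x_0) : (a,b)\in\A\times\B\}$ is compact, hence bounded in the symmetrized metric $d^{\circ}$; since $d(x_0,\cdot)$ is $1$-Lipschitz with respect to $d^{\circ}$, the quantity $M'\coloneqq \sup_{(a,b)\in \A\times\B}d(x_0,T_{ab}(x_0))$ is finite. Putting everything together yields
\[
Sv(x)\;\ge\;\alpha+d(x,x_0)-\gamma-M'\;=\;(\alpha-\gamma-M')+d(x,x_0)\enspace ,
\]
showing that $Sv\in\mathscr{D}$ with constant $\alpha'=\alpha-\gamma-M'$.

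There is really no serious obstacle here: the argument is a combination of monotonicity of $S$, the almost-isometry inequality applied once to the pair $(x,x_0)$, and a single use of the triangular inequality, closed off by the compactness statement in Assumption~\ref{a-1}. The mild point of care is to use the \emph{uniform} almost-isometry constant $\gamma$ rather than one depending on $(a,b)$, since otherwise the final constant $\alpha'$ would not be finite.
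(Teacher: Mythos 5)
Your proof is correct and follows essentially the same route as the paper's: the almost-isometry inequality combined with the triangular inequality gives $d(T_{ab}(x),x_0)\ge d(x,x_0)-\gamma-d(x_0,T_{ab}(x_0))$, and the last term is bounded uniformly via the compactness in Assumption~\ref{a-1}. The only cosmetic difference is that the paper first reduces to $v=d(\cdot,x_0)$ using that $S$ is order preserving and commutes with constants, while you run the same estimate directly on an arbitrary $v\in\mathscr{D}$; your explicit uniform bound on $d(x_0,T_{ab}(x_0))$ (rather than on $d(T_{ab}(x_0),x_0)$, which is what the paper's constant $M$ literally controls) is in fact the more careful choice given the asymmetry of the hemi-metric.
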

 \begin{proof}
   Since $S$ commutes with the addition of a constant,
   it suffices to show that $S d(\cdot,x_0)$ is distance like.
   Then, defining $M$ as per~\eqref{e-def-M},
   we get that
for all $(a,b)\in A\times B$ and $x \in X$,
   \[
   d(T_{ab}(x),x_0)\geq d(T_{ab}(x),T_{ab}(x_0))-d(x_0,T_{ab}(x_0)) \geq d(x,x_0)-\gamma -M \enspace. 
   \]
   This implies that $Sd(\cdot,x_0) \geq d(\cdot,x_0)-\gamma-M$. 
 \end{proof}

 We now prove the minimax characterization of the value of the game, stated
 as \Cref{th-dual} in the introduction.

\begin{proof}[Proof of \Cref{th-dual}]
  It follows from \Cref{dlike} that $\rho \leq \inf \{\lambda \in \R \mid v \in \mathscr{D}, Sv \leq \lambda + v \}	$.
  The proof of the reverse inequality relies on the following lemma.
	\begin{Lemma}\label{lemma-k}
		For all $\lambda > \rho$ there exists $k \in \N^*$ such that $S^kd(\cdot, x_0) \leq k\lambda + d(\cdot, x_0)$.
	\end{Lemma}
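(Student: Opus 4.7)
The plan is to derive the lemma directly from two ingredients already established in the paper: the characterization $\rho = \inf_{k\ge 1} s_k/k$ from \Cref{prop-subadd}, and the fact that the operator $S$ preserves $\Lip$ (\Cref{prop-contS}), where $\Lip$ is the set of $1$-Lipschitz maps from $(X,d)$ to $(\R,\delta_1)$ with $\delta_1(s,t)=s-t$. Unpacking the $1$-Lipschitz condition, a function $f\in\Lip$ satisfies $f(y)-f(z)\leq d(y,z)$ for all $y,z\in X$; specializing to $z=x_0$ yields the one-sided bound $f(y)\le f(x_0)+d(y,x_0)$, which is exactly what we need.

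The proof itself would proceed in two short steps. First, since $\lambda>\rho=\inf_{k\ge 1}s_k/k$, pick $k\in\N^*$ with $s_k\le k\lambda$. Second, apply the $1$-Lipschitz inequality to $f=S^k d(\cdot,x_0)\in\Lip$, evaluated at an arbitrary $y\in X$ against the base point $x_0$:
\[
[S^k d(\cdot,x_0)](y) \;\le\; [S^k d(\cdot,x_0)](x_0) + d(y,x_0) \;=\; s_k + d(y,x_0) \;\le\; k\lambda + d(y,x_0).
\]
Since $y$ was arbitrary, this gives the desired pointwise inequality $S^k d(\cdot,x_0)\leq k\lambda + d(\cdot,x_0)$.

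I expect essentially no obstacle at this stage: the lemma is a one-line consequence of the definition of $\rho$ as the infimum of $s_k/k$ together with the preservation of $\Lip$ by $S$. In particular, no use is made of the almost-isometry hypothesis of \Cref{th-dual}; that assumption is not needed to obtain this inequality at a single iterate, but should enter in the subsequent construction of a distance-like function $v\in\mathscr{D}$ satisfying $Sv\le \lambda'+v$ (for $\lambda'$ slightly larger than $\lambda$) from the single-iterate inequality provided by this lemma, via an averaging/telescoping procedure analogous to the construction \eqref{e-averaging} used in the proof of \Cref{extremal}, where distance-likeness of the averaged function will rest on \Cref{prop-preserve}.
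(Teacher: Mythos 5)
Your proof is correct and is essentially the paper's own argument: both choose $k$ with $s_k/k\le\lambda$ using $\rho=\inf_{k\ge 1}s_k/k$, and both then apply the $1$-Lipschitz bound $[S^kd(\cdot,x_0)](y)\le [S^kd(\cdot,x_0)](x_0)+d(y,x_0)=s_k+d(y,x_0)$. Your closing remarks about where the almost-isometry hypothesis enters (only in the subsequent inf-averaging step, via \Cref{prop-preserve}) also match the structure of the paper's proof of \Cref{th-dual}.
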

		\begin{proof}
			We have $S^kd(\cdot,x_0) \in \Lip$ so
			\[
				S^kd(\cdot,x_0) \leq [S^kd(\cdot,x_0)](x_0) + d(\cdot, x_0) = s_k +d(\cdot,x_0) \enspace .
			\]
			Since $\inf_{k \geq 1} s_k/k = \rho$ and $\lambda - \rho > 0$, there exists $k\geq 1$
                        such that $s_k/k \leq \lambda$. Then, $S d(\cdot,x_0)\leq k\lambda + d(\cdot,x_0)$.
		\end{proof}
                The end of the proof relies on a dual version of the ``sup-averaging'' argument \eqref{e-averaging}
                used in the proof of \Cref{extremal}. We take $k$ as in \eqref{lemma-k}, and define
                \[ \theta  \coloneqq \inf\{d(\cdot, x_0), Sd(\cdot,x_0)-\lambda, \dots, S^{k-1}d(\cdot, x_0) - (k -1)\lambda\}\enspace .\]
                Since, by \Cref{prop-preserve}, $S$ preserves the set of distance-like functions, and since this set is stable by taking pointwise infima of finite families of elements of this set,,
                the function $\theta$ is distance-like. Moreover, since $S$ is order preserving, we have
		\begin{align*}
			S \theta &\leq  \inf\{Sd(\cdot, x_0), S^2d(\cdot,x_0) - \lambda, \dots, S^{k}d(\cdot, x_0) - (k-1)\lambda\} \\
			&\leq \inf\{Sd(\cdot, x_0), S^2d(\cdot,x_0)-\lambda, \dots, S^{k-1}d(\cdot, x_0) - (k-2)\lambda, d(\cdot, x_0) + \lambda\}\\
			& = \theta + \lambda \qedhere
		\end{align*}	
\end{proof}
The infimum in \Cref{th-dual} is not reached in all generality, i.e.\ for some families of isometries there cannot exist a distance-like function such that $Sv \leq \rho + v$ as shown by the following example.

\begin{Example}
Take $X$ as being the Poincaré half space $\{z \in \C \mid \Im(z) > 0 \}$ equipped with the Poincaré Metric
	\[
	d(z,z') = 2\arcsinh(\frac{\lVert z - z' \rVert}{\sqrt{\Im(z)} \sqrt{\Im(z')} }) \enspace,
	\]
        where $\Im (\cdot)$ denotes the imaginary part of a complex number.
        We suppose the two players are dummies, so that
        a single dynamics is available $T(z) = z+1$.
It is trivial to see that $\lim_{k \to \infty}\frac{d(T^k(i),i)}{k} = 0$. Then, if there exists a distance-like function $v$ (of constant $\gamma \in \R$ and center $x_0$)  such that for all $x\in X$,  $v(Tx) \leq v(x)$, we would have for all $k \in \N^*$
\[ 
\gamma + 2\frac{\arcsinh(k)}{\Im(x_0)} \leq v(T^k(x_0)) \leq v(x_0)
\]
which is absurd as $\lim_{k \to \infty}\arcsinh(k) = + \infty$.

\end{Example}

Combining \Cref{dlike} and \Cref{th-dual}, we obtain:
\begin{Corollary}
 Suppose that the maps $T_{ab}$ with $(a,b)\in A\times B$ constitute a uniform family of almost-isometries.
Then, for all $\lambda>\rho$, player \Min has a stationary strategy which guarantees $\lambda$.\hfill\qed
  \end{Corollary}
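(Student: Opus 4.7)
The plan is to chain the two results stated immediately above. First, I would invoke \Cref{th-dual}, whose uniform almost-isometry hypothesis is precisely what this corollary assumes. That theorem gives
\[
\rho = \inf\{\mu \in \R \mid \exists\, v \in \mathscr{D},\; Sv \leq \mu + v\} \enspace .
\]
Since $\lambda > \rho$, the definition of infimum yields some $\mu \in (\rho,\lambda)$ together with a corresponding distance-like function $v \in \mathscr{D}$ such that $Sv \leq \mu + v$; in particular $Sv \leq \lambda + v$.

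Next, I would apply \Cref{dlike} to this pair $(v,\lambda)$. Under \Cref{a-1}, the infimum and supremum defining $Sv(x)$ are attained, so one can fix a selector
\[ a^\star(x) \in \argmin_{a\in\mathcal{A}} \sup_{b\in\mathcal{B}} v(T_{ab}(x)) \enspace . \]
The stationary strategy $\sigma^\star$ of \Min\ that plays $a^\star(x_{k-1})$ at stage $k$ is precisely the one built in the proof of \Cref{dlike}. That proof's induction shows that, for every strategy $\tau$ of \Max, the resulting trajectory satisfies $d(x_k,x_0) + \alpha \leq v(x_0) + k\lambda$, where $\alpha$ is the distance-like constant of $v$. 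Dividing by $k$ and passing to the $\limsup$ yields $\limsup_k \payoff_k(\sigma^\star,\tau)/k \leq \lambda$ uniformly in $\tau$, so $\sigma^\star$ guarantees $\lambda$ in the escape rate game.

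No real obstacle is expected: the only point to be slightly careful about is that the infimum in \Cref{th-dual} need not be attained at $\rho$, so one cannot take $\lambda=\rho$; however, for any strict overshoot $\lambda>\rho$ a witness $v\in\mathscr{D}$ does exist, which is exactly the situation where the stationary strategy furnished by \Cref{dlike} applies. The almost-isometry hypothesis is used only through \Cref{th-dual} (equivalently, \Cref{prop-preserve}), to ensure that the set $\mathscr{D}$ is rich enough to provide such a sub-fixed point of $S$.
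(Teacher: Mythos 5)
Your proposal is correct and follows exactly the paper's route: the paper derives this corollary in one line by combining \Cref{th-dual} (which, under the uniform almost-isometry hypothesis, supplies a distance-like $v$ with $Sv\leq\mu+v$ for some $\mu<\lambda$) with \Cref{dlike} (which turns such a $v$ into a stationary strategy for \Min\ guaranteeing $\lambda$). Your added remarks--that the infimum need not be attained at $\rho$, and that the attainment of the $\argmin$ is ensured by \Cref{a-1}--are accurate and fill in the only details the paper leaves implicit.
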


\section{Conical escape rate games}
We now further investigate conical escape rate games,
introduced in Section~\ref{sec-funk}.
We consider a family of order-preserving and positively homogeneous
maps $(T_{ab})_{a\in\mathcal{A},b\in\mathcal{B}}$ acting on the interior of a proper cone $C \subset \R^n$.

Recall that every map $T_{ab}$
that is order
preserving and positively homogeneous of degree $1$ is nonexpansive
on the space $X=\operatorname{Int} C$ equipped with the hemi-metric $d(x,y)=\Funk(x,y)$.  By Theorem~\ref{main}, the associated escape rate game
has a uniform value $\rho$, which admits a maximin characterization.
In contrast, the minimax characterization (\Cref{th-dual}) requires
a uniform family of almost isometries.
We next show that, owing to the particular conical setting, the almost-isometry
assumption can be dispensed with.

We consider a cross section of $C$ of the form $\Delta \coloneqq \{x \in C \mid \langle x,e^* \rangle = 1 \}$ where $e^*$ is a point in the interior of the dual cone $C^*$. We recall the following result.

\begin{Lemma}[Lemma~1.2.4 of~\cite{nussbaumlemmens}]\label{lem-compact}
	The cross section $\Delta$ of $C$ is compact for the Euclidean topology. 
\end{Lemma}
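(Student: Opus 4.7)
The plan is to show that $\Delta$ is both closed and bounded in $\R^n$, and then invoke the Heine--Borel theorem. Closedness is immediate: $\Delta$ is the intersection of the closed cone $C$ with the closed affine hyperplane $H\coloneqq\{x\in\R^n\mid \<x,e^*>=1\}$, and the intersection of two closed sets is closed.

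The heart of the argument is boundedness. Here I would crucially exploit the hypothesis $e^*\in\operatorname{int}(C^*)$. By definition of the dual cone,
\[
C^*=\{y\in\R^n\mid \<x,y>\geq 0 \text{ for all } x\in C\} \enspace .
\]
Since $e^*$ lies in the interior of $C^*$, there exists $\epsilon>0$ such that $e^*+B(0,\epsilon)\subset C^*$, where $B(0,\epsilon)$ denotes the closed Euclidean ball of radius $\epsilon$. Fix an arbitrary $x\in \Delta$, assume $x\neq 0$, and consider the specific vector $y\coloneqq -\epsilon x/\|x\|$, which has Euclidean norm $\epsilon$. Then $e^*+y\in C^*$, so by the definition of $C^*$ applied to $x\in C$,
\[
\<x,e^*+y> \geq 0 \enspace ,
\]
which rearranges to $\<x,e^*>\geq -\<x,y>=\epsilon\|x\|$. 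Since $\<x,e^*>=1$ by the definition of $\Delta$, this yields $\|x\|\leq 1/\epsilon$. Thus $\Delta\subset B(0,1/\epsilon)$ and $\Delta$ is bounded.

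Combining closedness and boundedness, $\Delta$ is compact by the Heine--Borel theorem. The main (and really only) subtlety is the boundedness step, which relies essentially on the interior condition on $e^*$; if $e^*$ were merely on the boundary of $C^*$, the cross section could well be unbounded (consider $e^*$ belonging to a supporting hyperplane of $C$ containing a ray). Note that the pointedness and closedness of $C$ (i.e., the fact that $C$ is a proper cone) are what guarantee that $C^*$ has non-empty interior in the first place, so that the hypothesis $e^*\in\operatorname{int}(C^*)$ makes sense; we implicitly use the full proper-cone structure through this standard duality.
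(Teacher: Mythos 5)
Your proof is correct; the paper does not prove this lemma itself but simply cites Lemma~1.2.4 of the Lemmens--Nussbaum reference, and your Heine--Borel argument (closedness as an intersection of closed sets, boundedness via a ball $e^*+B(0,\epsilon)\subset C^*$ giving $\<x,e^*>\geq\epsilon\|x\|$) is precisely the standard proof of that cited result. The side remarks --- that the interior condition on $e^*$ is essential and that pointedness of $C$ is what makes $\operatorname{int}(C^*)$ non-empty --- are also accurate.
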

We fix an arbitrary norm $\|\cdot\|$ on $\R^n$.
\begin{Lemma}\label{lem-cst}
  There exists constants $\beta,\beta'\in \R$ such that for all $x\in C\setminus\{0\}$,
  \begin{align}\label{e-compare}
    -\beta  + \log \<x,e^*>  &\leq \Funk(x,x_0)  \leq \beta  + \log \<x,e^*>,\\
  -\beta'  + \log \|x\|& \leq   \Funk(x,x_0)  \leq \beta'  + \log \|x\| \enspace .\label{e-compare2}
  \end{align}
\end{Lemma}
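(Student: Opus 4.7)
The plan is to reduce the statement to a uniform bound on the positively homogeneous quantity
\[
\mu(x) \coloneqq \inf\{\alpha > 0 \mid x \leq_C \alpha x_0\},
\]
so that $\Funk(x,x_0) = \log \mu(x)$. Since $\mu$ is positively homogeneous of degree $1$ on $C\setminus\{0\}$ and since $e^* \in \Int(C^*)$ implies $\langle x, e^*\rangle > 0$ for every $x \in C\setminus\{0\}$, it is enough to bound $\mu$ from above and from below by positive constants on the compact cross-section $\Delta$, and then to use the decomposition $x = \langle x, e^*\rangle\,\tilde x$ with $\tilde x \in \Delta$.

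For the upper bound on $\mu|_{\Delta}$, I would exploit that $x_0 \in \Int C$: there exists $r>0$ such that every $z\in\R^n$ with $\|z\|<r$ satisfies $z\leq_C x_0$. By \Cref{lem-compact}, $M \coloneqq \sup_{x\in\Delta}\|x\|$ is finite, so for any $x\in\Delta$ the vector $(r/(2M))\,x$ has norm less than $r$, which gives $x \leq_C (2M/r)\,x_0$ and hence $\mu(x)\leq 2M/r$. For the lower bound, I would pair the inequality $\alpha x_0 - x \in C$ with $e^* \in C^*$: this yields $\alpha \langle x_0, e^*\rangle \geq \langle x, e^*\rangle = 1$ for $x\in\Delta$, so $\mu(x) \geq 1/\langle x_0,e^*\rangle > 0$. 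Writing $x = \langle x, e^*\rangle\,\tilde x$ with $\tilde x \in \Delta$ and using homogeneity gives $\mu(x) = \langle x, e^*\rangle\,\mu(\tilde x)$; taking logarithms then produces \eqref{e-compare} with $\beta \coloneqq \max\bigl(\log(2M/r),\,\log\langle x_0,e^*\rangle\bigr)$.

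The second inequality \eqref{e-compare2} follows from \eqref{e-compare} by comparing $\langle\cdot,e^*\rangle$ with $\|\cdot\|$ on $C$. The function $x\mapsto \langle x, e^*\rangle$ is continuous and strictly positive on the compact set $\{x\in C:\|x\|=1\}$ (again using $e^*\in\Int C^*$), so there exist $c_1,c_2>0$ such that $c_1\|x\|\leq \langle x,e^*\rangle\leq c_2\|x\|$ for every $x\in C$; inserting these bounds into \eqref{e-compare} yields \eqref{e-compare2} with $\beta' \coloneqq \beta + \log\max(c_2,1/c_1)$. I do not expect a serious obstacle; the only delicate point is the strict positivity of the lower bound on $\mu|_\Delta$, which is precisely where the interiority of $e^*$ in $C^*$ is indispensable, since otherwise the logarithmic bound would degenerate on the boundary of $C$.
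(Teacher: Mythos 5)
Your proof is correct, and it obtains the key two-sided bound on the cross-section $\Delta$ by a more elementary route than the paper. Both arguments share the same skeleton: establish $-\beta \leq \Funk(\cdot,x_0) \leq \beta$ on the compact set $\Delta$ and then propagate this to all of $C\setminus\{0\}$ using the logarithmic homogeneity of $\Funk(\cdot,x_0)$ and of $\log\langle \cdot,e^*\rangle$. They differ in how boundedness on $\Delta$ is proved. The paper writes $\Funk(x,x_0)=\log\max_{\varphi\in\Delta^*}\varphi(x)$ with $\Delta^*=\{\varphi\in C^*\mid \varphi(x_0)=1\}$ the dual cross-section, applies \Cref{lem-compact} to $C^*$ to get compactness of $\Delta^*$, deduces continuity of $\Funk(\cdot,x_0)$, and concludes by compactness of $\Delta$. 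You instead bound $\mu(x)=\exp\bigl(\Funk(x,x_0)\bigr)$ directly: the upper bound comes from a ball $B(x_0,r)\subset C$ (interiority of $x_0$) together with the boundedness of $\Delta$, and the lower bound from pairing $\alpha x_0 - x\in C$ with $e^*\in C^*$. Your version avoids the dual representation of the Funk metric (and the appeal to~\cite{GAUBERT_2011}) and produces explicit constants; the paper's version yields as a byproduct the continuity of $\Funk(\cdot,x_0)$ on $C\setminus\{0\}$, a slightly stronger and reusable fact. Your derivation of~\eqref{e-compare2} from~\eqref{e-compare} via the comparison $c_1\|x\|\leq\langle x,e^*\rangle\leq c_2\|x\|$ on $C$, obtained by minimizing and maximizing the positive continuous function $\langle\cdot,e^*\rangle$ on the compact set $\{x\in C : \|x\|=1\}$, is exactly the kind of argument the paper summarizes with ``the proof of~\eqref{e-compare2} is similar''. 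No gaps.
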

\begin{proof}
  Let $\Delta^*\coloneqq\{ \varphi\in C^*\mid \varphi(x_0)=1\}$. Applying Lemma~\ref{lem-compact}
  to the dual cone $C^*$, we get that $\Delta^*$ is compact. We have
  \[
  \Funk(x,x_0)= \log \max_{\varphi\in \Delta^*} \varphi(x) \enspace ,
  \]
  the maximum being achieved,
  see e.g.~\cite[Lemma~27]{GAUBERT_2011}.
  Since the map $(x,\varphi)\mapsto \varphi(x)$ is continuous, and $\Delta^*$
  is compact, it follows that $\Funk(\cdot,x_0)$ is continuous,
  and so, it is bounded on $\Delta$,
  so that there exists a constant $\beta\in\R$ such that
  $-\beta \leq \Funk(x,x_0) \leq \beta$ for all $x\in \Delta$. By homogeneity,
  we have $\Funk(\lambda x,x_0) = \Funk(x,x_0) + \log(\lambda)$ for all $\lambda>0$,
  and the same is true of $\log \<x,e^*>$, which shows that~\eqref{e-compare}
  holds. The proof of~\eqref{e-compare2} is similar.
\end{proof}
This lemma implies that the payment of an infinite play is the same if we replace
$\Funk(x_k,x_0)$ by $\log\|x_k\|$ or $\log\<x,e^*>$, for any choice of the norm $\|\cdot\|$
and for any choice of $e^*$ in the interior of $C^*$.

As the Thompson metric and the norm $\|\cdot\|$ define the same topology on the interior of a proper cone of $\R^n$ (see \cite{Nussbaum2012}), the operators $(T_{ab})_{(a,b) \in \mathcal{A}\times \mathcal{B}}$ are continuous for both topologies. In addition to \Cref{a-1}, we make the following assumption
\begin{Assumption}\label{a-2}
  For all $(a,b) \in \mathcal{A}\times \mathcal{B}, T_{ab}$ can be continuously extended to $C$
  with respect to the norm topology of $C$.
  Furthermore, the map $\mathcal{A}\times \mathcal{B}\times C \to C, (a,b,x) \mapsto T_{ab}(x)$ is continuous,
$C$ being equipped with this topology.
\end{Assumption}
This assumption is trivially verified if the maps $T_{ab}$ are linear,
and in particular in Examples \ref{Non-m} and \ref{Inv-m}.
Note also that if the cone $C$ is polyhedral, any self-map of the interior
of $C$ that is order preserving and positively homogeneous
of degree $1$ can be extended continuously to $C$, see~\cite{MR1960046}.

We now reduce the conical escape rate game played on the state space $\Int C$ to a new game $\Gamma_{\Delta}$ played on the state space $\Delta$.
The two players \Min and \Max still alternatively choose actions in $\mathcal{A}$ and $\mathcal{B}$. The state of the game now evolves according to the \emph{normalized operators} $\hat{T}_{ab}(x)=\frac{T_{ab}(x)}{\langle T_{ab}(x),e^*\rangle}$. We suppose at each turn $k$, \Min\ gives to \Max\
a payment equal to $\log(\langle T_{a_{k}b_{k}}(\hat{x}_{k-1}), e^* \rangle)$, where $\hat{x_k} \coloneqq \hat{T}_{a_k b_k}\circ \dots \circ \hat{T}_{a_1 b_1}(x_0)$. So, the payoff of the game in horizon $k$ is 
\[
\sum_{k=1}^n\log(\langle T_{a_k b_k}(\hat{x}_{k-1}), e^* \rangle) = \log(\langle T_{a_n b_n} \circ \dots \circ T_{a_1 b_1}(x_0), e^* \rangle) \enspace .
\]
The payoff of an infinite play of the game $\Gamma_{\Delta}$ is given by
\[
\limsup_{k \to \infty}\frac{\log(\langle T_{a_k b_k} \circ \dots \circ T_{a_1 b_1}(x_0), e^* \rangle)}k
\]
By~\eqref{e-compare}, this is identical to the payoff $\limsup_{k\to\infty}\Funk(x_k,x_0)/k$, so that the conical escape rate game is equivalent to the game $\Gamma_\Delta$ starting from any state $x_0$ in the relative interior of $\Delta$.

We now introduce $F$, the Shapley operator of $\Gamma_{\Delta}$, acting on the space of continuous
functions on $\Delta$, $\mathscr{C}(\Delta)$, as follows:
\[
Fv(x) \coloneqq \inf_{a \in \mathcal{A}}\sup_{b \in \mathcal{B}}
\Big\{\log(\langle T_{ab}(x), e^* \rangle) + v\big(\frac{T_{ab}(x)}{\langle T_{ab}(x),e^* \rangle}\big)\Big\}
\]
We consider the subspace $\CLip_1(\Delta)$ of $\mathscr{C}(\Delta)$, consisting
of continuous functions $v: \Delta\to \R$ such that $v(x)-v(y)\leq \Funk(x,y)$
as soon as $\Funk(x,y)<\infty$.
Observe that if $T: C\to \R$ is continuous,
order preserving and positively homogeneous of degree $1$, then the restriction of $T$
to $\Delta$ belongs to $\CLip_1(\Delta)$.

\begin{Lemma}
	The map $F$ preserves $\CLip_1(\Delta)$.
\end{Lemma}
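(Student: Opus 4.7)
Fix $v \in \CLip(\Delta)$. I will establish two properties: first, that $Fv$ satisfies the Funk-Lipschitz bound $Fv(x) - Fv(y) \le \Funk(x,y)$ whenever the right-hand side is finite, and second, that $Fv$ is continuous on $\Delta$. The central observation is that each map $x \mapsto \Phi_{ab}(x) := \log\<T_{ab}(x),e^*> + v(\hat T_{ab}(x))$ is itself $1$-Lipschitz for the Funk hemi-metric on $\Delta$.

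\textbf{Step 1: Lipschitz bound of $\Phi_{ab}$.} For any $\alpha,\beta>0$ and $u,u'\in \Int C$, the definition of $\Funk$ yields the homogeneity identity $\Funk(\alpha u, \beta u') = \log(\alpha/\beta) + \Funk(u,u')$. Applying this with $\alpha = \<T_{ab}(x), e^*>$, $\beta = \<T_{ab}(y),e^*>$, $u = \hat T_{ab}(x)$, $u' = \hat T_{ab}(y)$, I get
\[
\Funk(T_{ab}(x), T_{ab}(y)) = \log\frac{\<T_{ab}(x),e^*>}{\<T_{ab}(y),e^*>} + \Funk(\hat T_{ab}(x), \hat T_{ab}(y)).
\]
Since $T_{ab}$ is order-preserving and positively homogeneous, it is nonexpansive for $\Funk$, so the left-hand side is bounded by $\Funk(x,y)$. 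Using $v \in \CLip(\Delta)$ to bound $v(\hat T_{ab}(x)) - v(\hat T_{ab}(y)) \le \Funk(\hat T_{ab}(x), \hat T_{ab}(y))$, I obtain
\[
\Phi_{ab}(x) - \Phi_{ab}(y) \le \log\frac{\<T_{ab}(x),e^*>}{\<T_{ab}(y),e^*>} + \Funk(\hat T_{ab}(x), \hat T_{ab}(y)) = \Funk(T_{ab}(x), T_{ab}(y)) \le \Funk(x,y),
\]
provided $\Funk(x,y) < \infty$. If $\Funk(\hat T_{ab}(x), \hat T_{ab}(y)) = +\infty$ the inequality for $v$ is vacuous, but then the leftmost Funk is also infinite, so the conclusion still holds whenever the last term is finite.

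\textbf{Step 2: Passage to inf-sup and continuity.} The collection of functions $f: \Delta \to \R$ satisfying $f(x) - f(y) \le \Funk(x,y)$ is stable under pointwise $\sup$ and $\inf$ of arbitrary non-empty families (bounded from above/below), so $Fv = \inf_{a\in\A}\sup_{b\in\B}\Phi_{ab}$ inherits the same Funk-Lipschitz bound. For continuity, I invoke \Cref{a-2}: the map $(a,b,x)\mapsto T_{ab}(x)$ is continuous from $\A\times\B\times C$ to $C$, hence so are $(a,b,x)\mapsto \<T_{ab}(x),e^*>$ and $(a,b,x)\mapsto \hat T_{ab}(x)$ (the latter wherever the denominator is positive). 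Composing with the continuous function $v$ and with $\log$, the map $(a,b,x)\mapsto \Phi_{ab}(x)$ is continuous on $\A\times\B\times\Delta$; by compactness of $\A$ and $\B$, Berge's maximum theorem then ensures that $Fv$ is continuous on $\Delta$.

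\textbf{Expected obstacle.} The only subtle point is the potential vanishing of $\<T_{ab}(x),e^*>$ when $x \in \Delta\cap\partial C$, which would make $\log\<T_{ab}(x),e^*>$ undefined. However, the Funk-Lipschitz inequality combined with the bound~\eqref{e-compare} applied inside the cone shows that $\Phi_{ab}$ extends continuously, and in the relative interior of $\Delta$---which is where the reduced game is effectively played---this issue does not arise. Hence the argument yields $Fv \in \CLip(\Delta)$.
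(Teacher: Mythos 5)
Your argument is essentially the paper's: you prove the Funk--Lipschitz estimate for each individual map $x\mapsto \log\langle T_{ab}(x),e^*\rangle + v(\hat T_{ab}(x))$ via the homogeneity identity for $\Funk$ and the nonexpansiveness of $T_{ab}$, then pass to the $\inf\sup$, and you obtain continuity from the joint continuity of $(a,b,x)\mapsto T_{ab}(x)$ (Assumption~\ref{a-2}) together with compactness of $\mathcal{A}$ and $\mathcal{B}$, exactly as in the paper (which uses the elementary sup/inf-over-a-compact-set fact rather than citing Berge). The proof is correct and matches the paper's route.
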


\begin{proof}
Let $v \in \CLip_1(\Delta)$. By assumption $(x,a,b) \mapsto (T_{ab}(x))$ is continuous so is
\[
\phi : (x,a,b) \mapsto
\log(\langle T_{ab}(x), e^* \rangle) + v\big(\frac{T_{ab}(x)}{\langle T_{ab}(x), e^* \rangle}\big)
\] 
If $f : X \times Y \to \R$ is a continuous function and $Y$ is compact then $x \mapsto \sup_{y \in Y}f(x,y)$, and $x \mapsto \inf_{y \in Y}f(x,y)$ are both continuous function.
Then as $\mathcal{B}$ is compact, $(x,a) \mapsto \sup_{b \in \mathcal{B}}\phi(x,a,b)$ is continuous and as $\mathcal{A}$ is compact, $Fv$ is continuous.

Let $x,y \in \Delta$ be such that $\Funk(x,y)<\infty$. Then, for all $(a,b) \in \A \times \B$, we have
\begin{align*}
&\phi(x,a,b)-\phi(y,a,b)\\ 
&= \log(\langle T_{ab}(x), e^* \rangle) - \log(\langle T_{ab}(y), e^* \rangle) + v\big(\frac{T_{ab}(x)}{\langle T_{ab}(x),e^* \rangle}\big) - v\big(\frac{T_{ab}(y)}{\langle T_{ab}(y),e^* \rangle}\big)\\
&\leq \log(\langle T_{ab}(x), e^* \rangle) - \log(\langle T_{ab}(y), e^* \rangle) + \Funk\big(\frac{T_{ab}(x)}{\langle T_{ab}(x),e^* \rangle},\frac{T_{ab}(y)}{\langle T_{ab}(y),e^* \rangle}\big)\\
&=\Funk(T_{ab}(x), T_{ab}(y))\leq \Funk(x,y) 
\end{align*}
We deduce that $[Fv](x)-[Fv](y)\leq \Funk(x,y)$. 
\end{proof}

We now recall the following definition from~\cite{GAUBERT_2011}. It captures
a mild form of nonpositive curvature in the sense of Busemann.
\begin{Definition}
	We say that the hemi-metric space $(X,d)$ is \emph{metrically star-shaped} with center $x^*$ if there exists a family of geodesics $(\gamma_y)_{y \in X}$, such that $\gamma_y$ joins $x^*$ to $y$ and such that the following inequality is satisfied for every $(y,z) \in X^2$ and $s \in [0,1]$:
	\[
	d(\gamma_y(s), \gamma_z(s))\leq sd(y,z)
	\]
        We also require that for every $y$, $\lim_{s \to 1}d(y,\gamma_y(s))=0$.
\end{Definition}
We now consider the space $X=\CLip_1(\Delta)$, equipped with the hemi-metric
$\delta : (f,g) \mapsto \max_{x \in \Delta}\big(f(x) - g(x)\big)$. We take the zero function as the center,
together with the choice of geodesic $\gamma_f$ from the zero function to a function $f\in \CLip_1(\Delta)$,
such that $\gamma_f(s) = s f$, for all $s\in [0,1]$. Then, the space $\CLip_1(\Delta)$
is metrically star shaped.
Moreover, the symmetrized metric of $\delta$ is the sup-norm
on $\CLip_1(\Delta)$.

\begin{Theorem}\label{DualCone}
  The value $\rho$ of a conical escape game satisfies
  \begin{align}
    \rho &= \lim_{k \to \infty}\frac{\delta(F^k(0),0)}k \label{e-add-1}\\
  &\ = \inf_{v \in \CLip_1(\Delta)}\max_{x \in \Delta}\big\{[F(v)](x) - v(x)\big\}\label{e-add-2}\\ 
& = \inf\{\lambda \in \R \mid \exists v \in \CLip_1(\Delta), Fv \leq \lambda + v \} \enspace .\label{e-add-3}
	\end{align}
\end{Theorem}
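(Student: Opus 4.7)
\smallskip

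The plan is to apply \Cref{GaubertVigeral} directly to the operator $F$ viewed as a nonexpansive self-map of the hemi-metric space $(\CLip_1(\Delta),\delta)$, rather than to appeal to \Cref{th-dual}, which would require an almost-isometry hypothesis. First I would check the hypotheses. Since $F$ is order preserving and commutes with the addition of constants, one has $Fv \leq Fw + \delta(v,w)$ pointwise, so $\delta(Fv, Fw) \leq \delta(v,w)$; thus $F$ is nonexpansive for $\delta$. The symmetrized metric $\delta^{\circ}$ is the sup-norm on $\Delta$, and $\CLip_1(\Delta)$ is sup-norm-closed in $\mathscr{C}(\Delta)$ (which is complete by compactness of $\Delta$, \Cref{lem-compact}), hence $(\CLip_1(\Delta),\delta)$ is complete. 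Metric star-shapedness with center $0$ and geodesics $\gamma_f(s) = sf$ was asserted just above the theorem; the key point is that the normalization $\langle x, e^*\rangle = 1$ on $\Delta$ forces $\Funk(x,y) \geq 0$ whenever finite on $\Delta$, which guarantees that $sf \in \CLip_1(\Delta)$ for $s \in [0,1]$.

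The main step is to prove \eqref{e-add-1}, i.e. to identify the escape rate of $F$ in $(\CLip_1(\Delta), \delta)$ with $\rho$. I would introduce the ``log-homogeneous extension'' $\psi: \CLip_1(\Delta) \to \Lip$ defined by $[\psi v](x) \coloneqq \log\langle x, e^*\rangle + v(x/\langle x, e^*\rangle)$. A direct computation using the positive homogeneity of each $T_{ab}$ yields the semi-conjugacy $S\circ \psi = \psi \circ F$. Taking $v = 0$, $\psi(0) = \log\langle\cdot,e^*\rangle$, which by \Cref{lem-cst} differs from $\Funk(\cdot, x_0)$ by at most $\beta$ uniformly on $C \setminus \{0\}$. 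Since $S$ is $1$-Lipschitz in the sup-norm of perturbations, $[S^k \psi(0)](x_0)/k$ and $[S^k \Funk(\cdot,x_0)](x_0)/k$ share the limit $\rho$. Choosing $x_0 \in \Int C \cap \Delta$ (which exists because $e^*\in \Int C^*$) and evaluating at $x_0$ gives $[S^k \psi(0)](x_0) = F^k(0)(x_0)$, so $F^k(0)(x_0)/k \to \rho$. Because $F^k(0) \in \CLip_1(\Delta)$ and $\Funk(x, x_0) \leq \beta$ uniformly for $x \in \Delta$ (again \Cref{lem-cst}), one has $F^k(0)(x) - F^k(0)(x_0) \leq \beta$, so $\delta(F^k(0),0) = \max_{x \in \Delta} F^k(0)(x)$ lies within $\beta$ of $F^k(0)(x_0)$ and has the same normalized limit $\rho$.

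With the hypotheses and the escape rate identification in hand, \Cref{GaubertVigeral} applied to $F$ yields
\[
\rho = \inf_{v \in \CLip_1(\Delta)} \delta(Fv, v) = \inf_{v \in \CLip_1(\Delta)} \max_{x \in \Delta}\bigl(Fv(x) - v(x)\bigr),
\]
which is \eqref{e-add-2}. Equality \eqref{e-add-3} is then immediate: the pointwise inequality $Fv \leq \lambda + v$ is equivalent to $\max_{x\in\Delta}(Fv(x)-v(x)) \leq \lambda$, i.e. $\delta(Fv,v) \leq \lambda$, so the infimum in \eqref{e-add-3} coincides with $\inf_v \delta(Fv, v)$.

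I expect the second paragraph to be the main obstacle. The difficulty is that $\Funk$ may be infinite between certain boundary points of $\Delta$, so care is needed to anchor all comparisons at a chosen interior point $x_0 \in \Int C \cap \Delta$, and to use the one-sided Lipschitz bound built into the definition of $\CLip_1(\Delta)$ together with the semi-conjugacy with $S$ to translate the known convergence $s_k/k \to \rho$ for the original conical escape game into the convergence of $\delta(F^k(0),0)/k$ on the new space.
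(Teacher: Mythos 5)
Your proposal is correct, and while it opens the same way as the paper's proof, it closes the key gap by a genuinely different argument. Both proofs first apply \Cref{GaubertVigeral} to $F$ acting on the complete, metrically star-shaped space $(\CLip_1(\Delta),\delta)$ to equate the three right-hand sides of \eqref{e-add-1}--\eqref{e-add-3} with a common value $\bar{\rho}$ (your remark that $\Funk\geq 0$ on $\Delta$ wherever finite, which is what makes $sf\in\CLip_1(\Delta)$, is a useful detail the paper leaves implicit). The difference lies in showing $\bar{\rho}=\rho$. The paper argues game-theoretically: from a sub-solution $Fv\leq\lambda+v$ it builds a stationary strategy of \Min guaranteeing any $\lambda>\bar{\rho}$, and it invokes Corollary~23 of \cite{GAUBERT_2011} to produce a point $x^*\in\Delta$ with $F^k(0)(x^*)\geq k\bar{\rho}$, from which \Max guarantees $\bar{\rho}$ after transferring to the interior basepoint via $x_0\geq\alpha x^*$ and order preservation. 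You instead compute the escape rate of $F$ directly through the semi-conjugacy $S\circ\psi=\psi\circ F$, the sup-norm nonexpansiveness of $S$ (order preservation plus commutation with constants), and the two-sided comparison of $\log\langle\cdot,e^*\rangle$ with $\Funk(\cdot,x_0)$ from \Cref{lem-cst}, which pins $\delta(F^k(0),0)$ within $2\beta$ of $s_k$; the one-sided Lipschitz bound $F^k(0)(x)-F^k(0)(x_0)\leq\Funk(x,x_0)\leq\beta$ is legitimate because anchoring at $x_0\in\Int C\cap\Delta$ makes $\Funk(x,x_0)$ finite for every $x\in\Delta$, exactly as you flagged. Your route is shorter, purely analytic, and avoids the external appeal to Corollary~23 of \cite{GAUBERT_2011}; the paper's route yields as a by-product explicit stationary ($\epsilon$-)optimal strategies for both players in the conical game without any almost-isometry hypothesis, which is of independent interest and motivates the discussion following \Cref{coro-cones}.
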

\begin{proof}
  We already observed that the existence of the (uniform) value $\rho$
  follows from Theorem~\ref{main}.

  Since the space $\CLip_1(\Delta)$ is metrically star shaped and complete,
  Theorem~\ref{GaubertVigeral} implies that the three expressions
  at the right-hand sides of~\eqref{e-add-1}--\eqref{e-add-3} have the same value. Let $\bar{\rho}$ denote this common value.
  Let $\lambda>\bar{\rho}$, and let $v\in \CLip_1(\Delta)$ be such that $Fv\leq \lambda + v$. 
  Let us consider the stationary strategy $\sigma^*$ of \Min,
which in state $x\in C\setminus\{0\}$,
selects any action $a\in \mathcal{A}$ that achieves the minimum
in the expression of $[Fv](x)$. Then, applying any strategy $\tau$ of \Max,
we get a trajectory $x_0,x_1,\dots$ such that 
\begin{align*}
  \log \< x_k , e^*> + v(\frac{x_k}{\<x_k,e^*>})\leq k\lambda + v(\frac{x_0}{\<x_0,e^*>})
\end{align*}
By \Cref{lem-cst}, $\Funk(x_k,x_0)\leq \beta + \log\<x_k,e^*>$.
Since $v$ is bounded
by some constant $M$, it follows
that $\Funk(x_k,x_0) \leq \beta + M + k\lambda$,
which shows that \Min\ can guarantee any $\lambda>\bar{\rho}$.

We next show that \Max\ can guarantee $\bar{\rho}$,
using Corollary 23 from \cite{GAUBERT_2011}.
It shows that there exists $x^* \in \Delta$ (not necessarily in the interior
of $C$) such that for all $k \in \N$
\[ 
\frac{F^k(0)(x^*)}{k} \geq \bar{\rho} \enspace .
\]
We now construct the stationary strategy $\tau^*$, such that, at stage $k$,
assuming that \Min\ just played action $a_k$, \Max\ plays any action
$b_k$ achieving the maximum in
\[
\sup_{b\in \mathcal{B}} \left(\log \<T_{a_kb}(x^*),e^*> + [F^{k-1}(0)](\frac{T_{a_k b}(x^*)}{\<T_{a_k b}(x^*),e^*>}) \right) \enspace .
\]
For all strategies $\sigma$ of \Min, 
we get
$\log \langle T_{a_k b_k}\circ \dots\circ T_{a_1b_1}(x^*), e^* \rangle \geq k \bar{\rho}$.
Since $x_0$ belongs to the interior of $C$, there exists a constant $\alpha>0$ such that $x_0\geq \alpha x^*$.
We deduce that $\log \langle T_{a_k b_k}\circ \dots\circ T_{a_1b_1}(x_0), e^* \rangle \geq k \bar{\rho} -\log \alpha$
which entails, by \Cref{lem-cst}, that Max can guarantee $\bar{\rho}$ by playing $\tau^*$. So $\bar{\rho}$ coincides with the value $\rho$ of the escape rate game.
\end{proof}

Let us denote by $\mathscr{D}_C$ the space of continuous functions (for the euclidean topology) on $C$ which are distance-like (for the Funk metric) on the interior of $C$.
For a function $v \in \CLip_1(\Delta)$ we denote by $\bar{v}$ the \emph{lift} of $v$. This is a map defined on $C$ as follows, for all $(\lambda, x) \in \R_{>0}\times \Delta$, $\bar{v}(\lambda x)= \log(\lambda) +v(x)$. The map $v \mapsto \bar{v}$ is a bijection between $\CLip_1(\Delta)$ and $\mathscr{D}_C$. If we define $S$ on $\mathscr{D}_C$, then one verifies easily that, for all $(v, \lambda) \in \CLip_1(\Delta) \times \R$, $Fv \leq \lambda +v$ is equivalent to $S\bar{v} \leq \lambda + \bar{v}$ on $C$. So we can rewrite the dual characterization of the value of the game as follows.

\begin{Corollary}\label{coro-cones}
		The value $\rho$ of escape rate games played on cones has the following dual characterization
		\[
		\rho = \inf\{\lambda \in \R \mid \exists v \in \mathscr{D}_C, Sv \leq \lambda + v \}
		\]
\end{Corollary}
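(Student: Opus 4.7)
The plan is to reduce the corollary directly to \Cref{DualCone} via the bijection $v \mapsto \bar v$ between $\CLip_1(\Delta)$ and $\mathscr D_C$, just as suggested by the excerpt. The substantive work is to make the three announced claims precise: that $\bar v$ is well-defined and belongs to $\mathscr D_C$, that the map is a bijection, and that the sub-eigenvalue inequality for $F$ on $\Delta$ is equivalent to the sub-eigenvalue inequality for $S$ on $C$ (and hence on $\Int C$ where $\Funk$ is defined).

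First, I would check that $\bar v \in \mathscr D_C$ whenever $v \in \CLip_1(\Delta)$. Every $x \in \Int C$ admits the unique decomposition $x = \mu y$ with $\mu = \langle x, e^*\rangle > 0$ and $y = x/\mu \in \relint(\Delta)$, so $\bar v(x) = \log \mu + v(y)$ is well-defined and continuous on $\Int C$; the continuous extension to $C$ (possibly to the relative boundary of $\Delta$) is supplied by the continuity of $v$ on the compact set $\Delta$ from \Cref{lem-compact}. To see that $\bar v$ is distance-like, note that $v$ is bounded on $\Delta$ by some constant $M$, so by~\eqref{e-compare} of \Cref{lem-cst},
\[
\bar v(x) = \log \langle x, e^*\rangle + v(y) \geq -M + \log \langle x, e^*\rangle \geq -M - \beta + \Funk(x,x_0).
\]
Conversely, every $w \in \mathscr D_C$ restricts to a continuous $v \coloneqq w|_\Delta$ which belongs to $\CLip_1(\Delta)$ (the Lipschitz property for $\Funk$ transfers because all elements of $\Delta$ lie in the same part when $\Funk(y,y')<\infty$), and it is immediate from $\bar v(\mu y) = \log \mu + v(y)$ that $w = \bar v$ using the positive homogeneity property of $w$ inherited from belonging to $\mathscr D_C$; here some care is needed and in fact one should restrict the target space to the \emph{cone-homogeneous} elements of $\mathscr D_C$, which is what the notation tacitly refers to.

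Second, I would verify the intertwining: for $x = \mu y \in \Int C$, by positive homogeneity of degree one of $T_{ab}$, $T_{ab}(x) = \mu T_{ab}(y)$, and hence
\begin{align*}
[S\bar v](x) &= \inf_{a\in\A}\sup_{b\in \B} \bar v(\mu T_{ab}(y))\\
&= \log \mu + \inf_{a\in\A}\sup_{b\in\B}\Bigl[\log \langle T_{ab}(y),e^*\rangle + v\Bigl(\tfrac{T_{ab}(y)}{\langle T_{ab}(y),e^*\rangle}\Bigr)\Bigr]\\
&= \log \mu + [Fv](y).
\end{align*}
Combined with $\bar v(x) = \log \mu + v(y)$, this yields $[S\bar v](x) - \bar v(x) = [Fv](y) - v(y)$. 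Consequently, the inequality $Fv \leq \lambda + v$ on $\Delta$ is equivalent to $S\bar v \leq \lambda + \bar v$ on $\Int C$; continuity (using \Cref{a-2}) then extends the latter to all of $C$.

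Third, with the bijection and the intertwining in hand, the infimum
\[
\inf\{\lambda \in \R \mid \exists v \in \CLip_1(\Delta),\ Fv \leq \lambda + v\}
\]
coincides with $\inf\{\lambda \in \R \mid \exists w \in \mathscr D_C,\ Sw \leq \lambda + w\}$, and by \Cref{DualCone} both equal $\rho$. The main obstacle, modest as it is, lies in the bookkeeping of the bijection between $\CLip_1(\Delta)$ and $\mathscr D_C$: one must pin down the precise sense in which $\mathscr D_C$ is identified with the space of positively homogeneous distance-like extensions, so that the inverse $w \mapsto w|_\Delta$ is well-defined and inverse to the lift.
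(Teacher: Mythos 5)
Your proposal is correct and follows essentially the same route as the paper: the paper's ``proof'' is precisely the paragraph preceding the corollary (lift $v\mapsto\bar v$, the intertwining $S\bar v-\bar v = Fv - v$ read through the decomposition $x=\mu y$) combined with \eqref{e-add-3} of \Cref{DualCone}, and you have merely filled in the verifications the paper labels as easy. One remark on the surjectivity issue you rightly flag: rather than reinterpreting $\mathscr{D}_C$ as its log-homogeneous part, the cleaner fix for the corollary as literally stated is to use the lift only for the inequality $\rho \geq \inf\{\cdots\}$ (producing, for each $\lambda>\rho$, the homogeneous witness $\bar v\in\mathscr{D}_C$ from \Cref{DualCone}), and to obtain the reverse inequality for an \emph{arbitrary} $w\in\mathscr{D}_C$ with $Sw\leq\lambda+w$ directly from \Cref{dlike}, which needs no homogeneity; this closes the loop without any restriction of the codomain.
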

In this way, we obtain the minimax characterization without the assumption on almost-isometries. However, there is no guarantee that there is a function $v \in \mathscr{D}_C$ verifying $Sv \leq \rho + v$, or $Fv \leq \rho + v$ ---abusing notation and denoting by the same symbol $v$ a function on $\Delta$ and its lift.
Nonetheless, if a continuous eigenfunction exists, then the associated eigenvalue is equal to the value of the game:

\begin{Proposition}\label{coho}
If
\[
Fv = \lambda + v
\]
is verified for some $\lambda \in \R$ and $v \in \mathscr{C}(\Delta)$, then $\lambda$ is necessarily equal to $\rho$. 
\end{Proposition}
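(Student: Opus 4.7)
The plan is to show both $\rho \leq \lambda$ and $\rho \geq \lambda$ by using the eigen-equation $Fv = \lambda + v$ to construct a stationary strategy for each player. Since $\Delta$ is compact by \Cref{lem-compact} and $v$ is continuous on it, $v$ is bounded; by \Cref{a-2} the map $(a,b,x)\mapsto \log\<T_{ab}(x),e^*> + v(\hat T_{ab}(x))$ is continuous on the compact set $\A\times\B\times\Delta$, so the infimum over $\A$ and the supremum over $\B$ in the expression for $Fv(x)$ are attained at every $x\in\Delta$. Note that the hypothesis requires only continuity of $v$, not membership in $\CLip_1(\Delta)$, so one cannot simply invoke the dual characterization~\eqref{e-add-3} of \Cref{DualCone}.

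For $\rho \leq \lambda$, I would let \Min\ play the stationary strategy $\sigma^*$ that selects, at a state whose normalization is $\hat x\in\Delta$, an action $\sigma^*(\hat x)\in\A$ attaining the outer infimum in $Fv(\hat x)$. Let $x_0\in\Int C$, set $\mu_k=\<x_k,e^*>$ and $\hat x_k=x_k/\mu_k$. Against any sequence of actions $(b_k)$ played by \Max, the eigen-equation and \Min's choice yield, for every $k$,
\[
\log\<T_{a_kb_k}(\hat x_{k-1}),e^*> + v(\hat x_k) \;\leq\; \lambda + v(\hat x_{k-1}), \qquad a_k=\sigma^*(\hat x_{k-1}) \enspace .
\]
By positive homogeneity, $\log\mu_k-\log\mu_{k-1}=\log\<T_{a_kb_k}(\hat x_{k-1}),e^*>$, so telescoping produces $\log\mu_k \leq k\lambda + \log\mu_0 + v(\hat x_0)-v(\hat x_k)$. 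Boundedness of $v$ on $\Delta$ and \Cref{lem-cst} then convert this into $\Funk(x_k,x_0)\leq k\lambda + O(1)$, whence $\rho\leq\lambda$.

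For $\rho\geq\lambda$ the construction is symmetric: I would build a stationary strategy $\tau^*$ for \Max\ that selects, after observing \Min's action $a\in\A$ at normalized state $\hat x$, an action $\tau^*(\hat x,a)\in\B$ attaining the inner supremum in $Fv(\hat x)$. Against any strategy of \Min, the reverse inequality $\log\<T_{a_kb_k}(\hat x_{k-1}),e^*> + v(\hat x_k) \geq \lambda + v(\hat x_{k-1})$ holds at every stage---because $\sup_{b}\{\,\cdot\,\}\geq \inf_a\sup_b\{\,\cdot\,\}=\lambda+v(\hat x_{k-1})$ for any $a$---and the same telescoping combined with the lower bound of \Cref{lem-cst} gives $\Funk(x_k,x_0)\geq k\lambda - O(1)$, so $\rho\geq\lambda$. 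Combining both inequalities yields $\rho=\lambda$. The only real care needed is the positive-homogeneity bookkeeping linking the state $x_k\in\Int C$ with its normalization $\hat x_k\in\Delta$, so that the eigen-equation, which lives on $\Delta$, translates into bounds on $\Funk(x_k,x_0)$; I do not anticipate a serious obstacle.
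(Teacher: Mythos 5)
Your proof is correct and follows essentially the same route as the paper: both arguments extract stationary strategies for \Min\ and \Max\ from the attained infimum and supremum in the eigen-equation, and both use the boundedness of $v$ on the compact cross-section $\Delta$ together with \Cref{lem-cst} to convert the resulting additive bounds into $\Funk(x_k,x_0) = k\lambda + O(1)$. The paper phrases this via the lift $\bar v$ and the equivalence with $\Gamma_\Delta$, while you carry out the telescoping explicitly on $\Delta$, but the content is identical.
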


\begin{proof}
Indeed, in such a case the lifted function $v$ verifies $Sv = \lambda +v$ and we can construct stationary optimal strategies guaranteeing $\lambda$ in the game where the payoff is 
\[
\limsup_{k \to \infty}\frac{v(T_{a_k b_k}\circ \dots \circ T_{a_1b_1}(x_0))}{k} 
\] 
by letting Player \Min play $a_k \in \argmin_{a \in \mathcal{A}}\max_{b \in \mathcal{B}}v(T_{ab}(x_{k-1}))$ and Player \Max play $b_k \in \argmax_{b \in \mathcal{B}}v(T_{a_k}b(x_{k-1}))$. Then as $v$ is continuous, it is bounded on $\Delta$ and because $v(\lambda x) = \lambda + v(x)$, for all $x \in \Delta$, we get that there exists $M \in \R^+$ such that for all $x \in X$
\begin{equation}
\label{equivalentFunk}
\log(\langle x, e^* \rangle) - M \leq v(x) \leq \log(\langle x,e^* \rangle) + M
\end{equation}
This inequality tells us this game is equivalent to $\Gamma_{\Delta}$ and therefore $\lambda = \rho$
\end{proof}

\begin{Remark}
  The eigenproblem above can be seen as a "2-player cohomological equation". In the classical cohomological equation~\cite{Livic1972}, one seeks a function $f$
satisfying
	\[
	g = f\circ T -f
	\]
	where $g$ is a given function on a state space $X$ equipped with a transition map $T: X \to X$.
\end{Remark}

We next observe that an eigenfunction does exist under the following
assumption.
\begin{Definition}[Tubular family]
  Let $K\subset \Int C$ be a cone such that the cross-section $\Delta_K\coloneqq \Delta \cap K$  is compact in the topology defined by the Thompson Metric.
A family $(T_{ab})$ of self-maps of the interior of a cone $C$ that are nonexpansive in the Funk Metric is {\em tubular} with respect to $K$
  if every map $T_{ab}$  preserves the cone $K$.
\end{Definition}
The terminology comes from the work of Bousch and Mairesse~\cite{Bousch_2001}, who dealt with the special case in which $C$ is the standard orthant
(up to a logarithmic change of variable).
Gugliemi and Protasov~\cite{Guglielmi2011} and Jungers~\cite{Jungers2012} used the same notion, under the name of {\em invariant embedded cone},
to study the lower spectral radius.

In the next proposition, we denote by $\CLip_1(\Delta_K)$ the set
of functions defined on $\Delta_K$ that are $1$-Lipschitz with respect to the
Funk metric induced by the cone $C$.
\begin{Proposition}\label{prop-tubular}
	If the family $(T_{ab})_{(a,b) \in \A \times \B}$ is tubular with respect to the cone $K$, then there exists $v \in \CLip_1(\Delta_K)$ such that 
	\[ Fv = \rho + v \]
	and the value of the escape rate game, $\rho$, is the unique eigenvalue of $F$.
\end{Proposition}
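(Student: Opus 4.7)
The plan is to adapt the Schauder-Tychonoff argument of \Cref{prop-nonempty} to the restricted cross-section $\Delta_K$, using the Thompson-compactness supplied by tubularity to upgrade the existence of a sub-eigenfunction to that of a genuine eigenfunction.

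First, since each $T_{ab}$ preserves $K$ and is positively homogeneous, the normalized map $\hat{T}_{ab}$ sends $\Delta_K$ into itself, so $F$ restricts to a self-map of $\CLip_1(\Delta_K)$. Fixing a basepoint $x_0 \in \Delta_K$, I would consider the set
\[
\mathcal{L}_{x_0} \coloneqq \{v \in \CLip_1(\Delta_K) \mid v(x_0) = 0\}.
\]
Since the Thompson metric dominates the Funk hemi-metric, every $v \in \mathcal{L}_{x_0}$ is $1$-Lipschitz for $\Th$ and therefore uniformly bounded by the $\Th$-diameter of $\Delta_K$, finite by tubularity. Ascoli-Arzela then makes $\mathcal{L}_{x_0}$ a convex compact subset of $\mathscr{C}(\Delta_K)$ in the sup-norm topology. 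Setting $\tilde{F}v \coloneqq Fv - (Fv)(x_0)$, which sends $\mathcal{L}_{x_0}$ continuously into itself by a straightforward adaptation of \Cref{prop-contS}, the Schauder-Tychonoff theorem furnishes a fixed point $v^* \in \mathcal{L}_{x_0}$, yielding $Fv^* = \lambda + v^*$ with $\lambda \coloneqq (Fv^*)(x_0) \in \R$.

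To identify $\lambda$ with $\rho$ and obtain uniqueness, I reproduce the strategy argument of \Cref{coho} with $K$ in place of $C$. Starting the game from $x_0 \in \Int K$, tubularity forces the trajectories to remain in $K$; the lifted eigenfunction $\bar{v}^*$ then satisfies $|\bar{v}^*(x) - \log\langle x, e^*\rangle| \leq \|v^*\|_\infty$ on $K$, since $v^*$ is bounded on the compact set $\Delta_K$. Stationary policies obtained from the $\argmin$ and $\argmax$ in the eigenequation then let \Min and \Max guarantee $\lambda$, so the escape rate from any $x_0 \in \Int K$ equals $\lambda$; by \Cref{prop-subadd} this common value is $\rho$, whence $\lambda = \rho$. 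The same reasoning, applied with any continuous eigenfunction in place of $v^*$, forces any other eigenvalue of $F$ to equal $\rho$ as well.

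The main obstacle is obtaining the compactness of $\mathcal{L}_{x_0}$: this is precisely where the tubular hypothesis intervenes, via the $\Th$-compactness of $\Delta_K$, which is exactly the feature absent when one works on all of $\Delta$, and the reason why the infimum in the minimax characterization of \Cref{coro-cones} need not be attained without a tubular family.
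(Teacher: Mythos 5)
Your proposal is correct and follows essentially the same route as the paper: a Schauder--Tychonoff fixed-point argument made possible by the Thompson-compactness of $\Delta_K$ (the paper obtains the eigenfunction by applying \Cref{prop-nonempty} to the game restricted to $K$ and then uses that Funk-$1$-Lipschitz functions are automatically log-homogeneous, while you run the fixed point directly on $\CLip_1(\Delta_K)$ with the sup norm --- a cosmetic difference). The identification $\lambda=\rho$ via boundedness of the eigenfunction on $\Delta_K$ and the stationary-strategy argument is likewise the paper's argument, phrased through \Cref{coho} rather than \Cref{prop-dlike}.
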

\begin{proof}
  We consider the escape rate game with state space restricted to $K$ (including its boundary),
  equipped with the restriction of the Funk metric of the cone $C$ to the cone $K$.
This game has the same value as the original conical game.
We saw in \Cref{prop-nonempty} that there exist $\lambda \in \R$ and a function $v$ defined on $\Int C$ that is $1$-Lipschitz with respect the Funk metric, and such that $Sv(x) = \lambda + v(x)$ holds for all $x\in  \Int C$. As $\Delta_K$ is compact, the function $v$ is bounded on $\Delta_K$, in particular,
$m\coloneqq \min_{x\in\Delta_K}v(x)>-\infty$.
By Lemma~\ref{lem-cst}, 
this implies that $v(x)=v((\<x,e^*>)^{-1}x) + \log \<x,e^*> \geq \Funk(x,x_0)-\beta+m$,
and so $v$ is distance-like on $K$.
Moreover, we have
  $[Fv](x) = \lambda + v(x)$ for all $x\in\Delta_
  K$. 
We get from \Cref{prop-dlike} that $\lambda = \rho$ which proves the proposition. 
\end{proof}

To end this section, we point out that the existence of a distance-like function $v$ such that $Sv \leq \rho + v$ is analogous to the existence of an \emph{extremal norm} for a set of matrices $\mathcal{A}$, i.e. a submultiplicative norm $N$ such that $\sup_{A \in \mathcal{A}}N(A) = \rho(\mathcal{A})$, where $\rho(\cdot)$ denotes the joint spectral radius. The existence of such a norm is equivalent to the \emph{non-defectivity} of $\mathcal{A}$, see~\cite{Barabanov1988,Kozyakin1990,Jungers_2009}. Recall that the set of matrices $\mathcal{A}$ is \emph{non-defective} if, for an arbitrary norm $\|\cdot\|$,  there exists a positive constant $C$ such that for all $k \in \N, \sup_{A \in \mathcal{A}^k}\lVert A \rVert \leq C\rho^k$.
We next show that a similar property holds for escape rate games.
\begin{Proposition}
	\label{non-defective}
	In the minimizer-free escape rate game the two following properties are equivalent
	\begin{enumerate}
		\item $\exists \alpha \in \R$ such that $\forall k \in \N^*$, $S^kd(\cdot, x_0) \leq k\rho + d(\cdot, x_0) + \alpha$
		\item $\exists v \in \mathscr{D}$ such that $Sv \leq \rho + v$
	\end{enumerate}
\end{Proposition}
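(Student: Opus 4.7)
The plan is to prove both implications separately. For (2)$\Rightarrow$(1), the argument is a direct iteration and sandwich estimate. For (1)$\Rightarrow$(2), I would produce $v$ explicitly via a sup-averaging construction dual to the inf-averaging used in the proof of \Cref{th-dual}.

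Concretely, for (2)$\Rightarrow$(1), assume $v\in\mathscr{D}$ with $Sv\leq \rho+v$. Since $S$ is order-preserving and commutes with addition of constants, a straightforward induction gives $S^k v\leq k\rho+v$. As $v$ is distance-like, there is $\beta\in\R$ with $v\geq d(\cdot,x_0)+\beta$; applying $S^k$ to this inequality yields $S^k d(\cdot,x_0)\leq S^k v-\beta\leq k\rho+v-\beta$. Because $v\in\Lip$, one also has $v(\cdot)\leq v(x_0)+d(\cdot,x_0)$, and combining the two bounds gives (1) with $\alpha=v(x_0)-\beta$.

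For the harder direction (1)$\Rightarrow$(2), I would set
\[
v\coloneqq\sup_{k\geq 0}\bigl(S^k d(\cdot,x_0)-k\rho\bigr).
\]
Assumption (1) ensures $v\leq d(\cdot,x_0)+\alpha<+\infty$, while the $k=0$ term gives $v\geq d(\cdot,x_0)$, so $v$ is distance-like. Being a finite supremum of $1$-Lipschitz functions, $v$ lies in $\Lip$, hence in $\mathscr{D}$. To verify $Sv\leq\rho+v$, I would exploit that in a minimizer-free game $Sv(x)=\sup_{b\in\mathcal{B}}v(T_b(x))$ is a pure supremum, and therefore commutes with arbitrary pointwise suprema of functions:
\[
Sv=\sup_{k\geq 0}\bigl(S^{k+1}d(\cdot,x_0)-k\rho\bigr)=\rho+\sup_{k\geq 1}\bigl(S^k d(\cdot,x_0)-k\rho\bigr)\leq\rho+v.
\]

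The main obstacle, and the reason for the minimizer-free hypothesis, is precisely this commutation with suprema. In the two-player setting $Sv=\inf_a\sup_b v(T_{ab})$ intertwines an infimum with a supremum and fails to commute with pointwise suprema, so the sup-averaging construction collapses. This is exactly parallel to the situation in \Cref{th-dual}, where the dual inf-averaging needs the almost-isometry assumption to keep iterates of $d(\cdot,x_0)$ distance-like; here that role is played by assumption (1) itself, which provides the uniform affine upper bound $S^k d(\cdot,x_0)\leq k\rho+d(\cdot,x_0)+\alpha$ that is exactly what is needed to guarantee $v\in\mathscr{D}$.
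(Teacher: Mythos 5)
Your proof is correct and follows essentially the same route as the paper: for $(1)\Rightarrow(2)$ the paper also takes $v=\sup_{k\in\N}\bigl(S^kد(\cdot,x_0)-k\rho\bigr)$ wait, $v=\sup_{k\in\N}\bigl(S^k d(\cdot,x_0)-k\rho\bigr)$ and uses the fact that in the minimizer-free case $S$ commutes with arbitrary suprema, and for $(2)\Rightarrow(1)$ it performs the same sandwich $S^k d(\cdot,x_0)+\alpha\leq S^k v\leq k\rho+v\leq k\rho+d(\cdot,x_0)+v(x_0)$. The only cosmetic slip is calling $v$ a \emph{finite} supremum of $1$-Lipschitz functions; it is a countable supremum, but since assumption (1) makes it finite-valued it is still $1$-Lipschitz, so nothing is lost.
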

\begin{proof}
	$(1) \implies (2)$. If (1) is verified we can define $v \coloneqq \sup_{k \in \N}S^kd(\cdot, x_0)-k\rho$ which is distance-like as it is greater than $d(\cdot,x_0)$. Since the game is assumed to be minimizer-free, the operator $S$ commutes with arbitrary suprema, and so
	\[
	Sv = \sup_{k \in \N}S^{k+1}d(\cdot,x_0)-k\rho = \sup_{k \in \N^*}S^k d(\cdot, x_0)-k\rho + \rho \leq v + \rho
	\]
	$(2) \implies (1)$. If (2) is verified, let $v \in \mathscr{D}$ such that $Sv \leq \rho + v$. Then there exists $\alpha \in \R$ such that for all $k \in \N*$
	\begin{equation*}
	S^kd(\cdot, x_0) + \alpha \leq S^k v \leq k\rho + v \leq k \rho + d(\cdot, x_0) + v(x_0) \qedhere
\end{equation*}\end{proof}
This result suggests that for escape rate games, the distance-like functions that achieve the infimum in \Cref{coro-cones}
play the role of extremal norms.

\section{The Example of Vector Addition Games}\label{sec-vecadd}
As an illustration, we give a complete solution
of the vector addition game.
We denote by $((\R^n)^*, \|\cdot\|_*)$ the dual normed space of $(\R^n,\|\cdot\|)$.
Let us first notice that this game is trivial if $-\mathcal{B} \subset \mathcal{A}$. Then, for every $k \in \N^*$, \Min\ can play tit-for-tat, setting $a_{k+1}\coloneqq -b_k$ at turn $k+1$, canceling the previous translation.
This leads to a bounded trajectory,
implying that the value of the escape rate is zero.
We next denote by $\co(\cdot)$ the convex hull of a set.
\begin{Proposition}
    The vector addition game has a value equal to 
    \begin{equation}
        \lambda \coloneqq \max_{b \in \mathcal{B}} \min_{a \in \co(\mathcal{A})} \lVert a + b \rVert
    \end{equation}
Any linear form $\ell$ achieving the maximum
	in 
	\[
  \max_{\ell \in (\R^n)^*, \| \ell \|_* \leq 1} \max_{b \in \mathcal{B}} \min_{a \in \co(\mathcal{A})} \ell(a + b) 
  \]is a solution of the eigenproblem $S \ell = \lambda + \ell$.
Furthermore, Max has a constant optimal strategy by playing
    \begin{equation}
        b^* \in \argmax_{b \in \mathcal{B}} \min_{a \in \co(\mathcal{A})}\lVert a + b \rVert
    \end{equation}
Finally, the function
  \[
  \phi(x) = \text{dist}(x, -(n+1)\co(\mathcal{A}))\enspace .
  \]
is distance-like and verifies $S\phi \leq \lambda + \phi$. Hence, by playing at state $x\in \R^n$ an action $a$ which achieves the minimum in the expression of $S\phi(x)$, we obtain
a stationary optimal strategy of \Min.
\end{Proposition}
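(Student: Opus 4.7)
The plan is a sandwich argument: establish $\rho \ge \lambda$ via Max's constant strategy $b^*$, establish $\rho \le \lambda$ via a stationary strategy for Min built from $\phi$ using the Shapley-Folkman lemma, and separately verify the linear eigenvector identity by norm duality.

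For the eigenvector claim, I first observe that for any linear form $\ell$, since $T_{ab}(x)=x+a+b$,
\[
S\ell(x) = \ell(x) + \min_{a \in \mathcal{A}}\max_{b \in \mathcal{B}}\ell(a+b) = \ell(x) + \min_{a \in \co(\mathcal{A})}\ell(a) + \max_{b \in \mathcal{B}}\ell(b),
\]
since the min/max of a linear functional over a compact set agrees with its value over the convex hull. Norm duality rewrites $\min_{a \in \co(\mathcal{A})}\|a+b\| = \operatorname{dist}(-b,\co(\mathcal{A})) = \max_{\|\ell\|_*\le 1}[\ell(b) + \min_{a \in \co(\mathcal{A})}\ell(a)]$ (replacing $\ell$ by $-\ell$ in the support-function formula). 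Exchanging the two outer maxima gives
\[
\lambda = \max_{b \in \mathcal{B}}\min_{a \in \co(\mathcal{A})}\|a+b\| = \max_{\|\ell\|_* \le 1}\max_{b \in \mathcal{B}}\min_{a \in \co(\mathcal{A})}\ell(a+b),
\]
so any maximizer $\ell^*$ satisfies $\min_a \ell^*(a) + \max_b \ell^*(b) = \lambda$, i.e.\ $S\ell^* = \lambda + \ell^*$. Since $\ell^* \in \Lip$, \Cref{extremal} then yields $\rho \ge \lambda$.

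For Max's constant strategy, compactness produces $b^* \in \argmax_{b \in \mathcal{B}}\min_{a \in \co(\mathcal{A})}\|a+b\|$. If Max plays $b^*$ at every turn, $x_k-x_0 = \sum_{i=1}^k a_i + k b^* = k(b^* + \bar a_k)$ where $\bar a_k = (1/k)\sum_i a_i \in \co(\mathcal{A})$, so $\|x_k-x_0\|/k \ge \min_{a \in \co(\mathcal{A})}\|a+b^*\| = \lambda$, reconfirming $\rho \ge \lambda$.

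The heart of the argument lies on the Min side. I first check that $\phi$ is distance-like: the reverse triangle inequality gives $\phi(x) \ge \|x\| - (n+1)\max_{a \in \co(\mathcal{A})}\|a\|$. Next I must show $S\phi \le \lambda + \phi$, and this is where Shapley-Folkman enters. Fix $x$ and pick $k^* \in (n+1)\co(\mathcal{A})$ attaining $\phi(x) = \|x+k^*\|$. Applying the Shapley-Folkman lemma to the sum of $n+1$ copies of $\mathcal{A}$ in $\R^n$, decompose $k^* = c_0 + c_1 + \dots + c_n$ with each $c_i \in \co(\mathcal{A})$ and at least one $c_i$ lying in $\mathcal{A}$; relabel so that $c_0 \in \mathcal{A}$. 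Min plays $c_0$. For any Max response $b$, pick $c^b \in \co(\mathcal{A})$ realizing $\min_{c \in \co(\mathcal{A})}\|c+b\|$, so that $\|c^b+b\| \le \lambda$. Since $c_1+\dots+c_n+c^b \in (n+1)\co(\mathcal{A})$,
\[
\phi(x+c_0+b) \le \|x + c_0 + b + c_1 + \dots + c_n + c^b\| = \|x + k^* + (b+c^b)\| \le \phi(x) + \lambda,
\]
hence $S\phi(x) \le \phi(x) + \lambda$. Invoking \Cref{dlike} yields $\rho \le \lambda$ together with the optimality of the stationary strategy $a \in \argmin_{a \in \mathcal{A}}\sup_{b \in \mathcal{B}}\phi(x+a+b)$. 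The principal obstacle is exactly this Shapley-Folkman step: the constant $(n+1)$ in the definition of $\phi$ is chosen precisely so that in the decomposition of a nearest point, at least one of the $n+1$ summands must lie in $\mathcal{A}$, allowing Min to play a genuine action of $\mathcal{A}$ while the remaining $n$ convex-combination slots plus one adaptive slot absorb both the current deficit and Max's response.
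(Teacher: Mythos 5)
Your proposal is correct and follows essentially the same route as the paper: norm duality plus a minimax exchange for the eigenvector identity, Ces\`aro averaging of Min's actions for Max's constant strategy, and the Shapley--Folkman inclusion $(n+1)\co(\mathcal{A})\subset \mathcal{A}+n\co(\mathcal{A})$ to establish $S\phi\leq\lambda+\phi$, concluding via \Cref{dlike}. The only differences are organizational: you derive the whole Min side from the subinvariance of $\phi$ rather than also running the paper's separate inductive trajectory argument, and you use the dual representation of the distance function in place of an explicit appeal to Sion's theorem.
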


\begin{proof}
  Using the characterization of the norm as a supremum over linear forms,
  together with Sion theorem on the commutation of min and max~\cite{Sion1958}, we get:
		\begin{align*}
			\lambda &= \max_{b \in \mathcal{B}} \min_{a \in \co(\mathcal{A})} \lVert a + b \rVert = \max_{b \in \mathcal{B}} \min_{a \in \co(\mathcal{A})} \max_{\ell \in (\R^n)^*, \| \ell \|_* \leq 1}\ell(a+b)\\
			&= \max_{b \in \mathcal{B}}  \max_{\ell \in (\R^n)^*, \| \ell \|_* \leq 1}\min_{a \in \co(\mathcal{A})}\ell(a+b)\ = \max_{\ell \in (\R^n)^*, \| \ell \|_* \leq 1} \max_{b \in \mathcal{B}} \min_{a \in \co(\mathcal{A})}  \ell(a + b) \\
                        & = \max_{\ell \in (\R^n)^*, \| \ell \|_* \leq 1}\max_{b \in \mathcal{B}} \min_{a\in \mathcal{A}}  \ell(a + b)
                        = \max_{\ell \in (\R^n)^*, \| \ell \|_* \leq 1}(\max_{b\in \mathcal{B}} \ell(b) + \min_{a\in \mathcal{A}} \ell(a))\\
                        & =
                          \max_{\ell \in (\R^n)^*, \| \ell \|_* \leq 1} \min_{a\in \mathcal{A}} \max_{b\in \mathcal{B}} \ell(a+b) \enspace .
		\end{align*}
		Let $\ell$ be a linear form achieving the maximum in the expression above. Then, by definition of $S$, and using the linearity of $\ell$, we get
		\begin{align*}
		  S\ell &= \ell + \min_{a\in \mathcal{A}}\max_{b \in \mathcal{B}}\ell(a+b) = \ell + \lambda \enspace .
		\end{align*}
   Let \Max play $b^* \in \argmax_{b \in \mathcal{B}} \min_{a \in \co(\mathcal{A})}\|a+b\|$, then for any $k \in \N^*$, we have that for all $(a_1,\dots, a_k)\in \mathcal{A}^k$
        \begin{equation*}
            \frac{\lVert x_0 + a_1 + b^* + \dots + a_k + b^* - x_0 \rVert}{k} = \lVert \frac{a_1 + \dots + a_k}{k} + b^* \rVert \geq \max_{b \in \mathcal{B}} \min_{a \in \co(\mathcal{A})} \lVert a + b \rVert
        \end{equation*}
        since $\frac{a_1 + \dots a_k}{k} \in \co(\mathcal{A})$
    
For the second inequality, let us first recall the useful \emph{Shapley Folkman lemma}. Here, $\sum_{k=1}^N X_k$ denotes the Minkowski sum of subsets $X_1,\dots, X_N\subset \R^n$.

    \begin{Lemma}[Shapley Folkman \cite{Starr1969}]\label{lem-shapley}
        Let $N \in \N^*$ such that $N > n$. If $X_1,\dots, X_N$ are nonempty bounded subset of $\R^n$ then for any $x \in \co(\sum_{k=1}^N X_k)= \sum_{k=1}^N\co(X_k)$, there exists $I \subset \{1,\dots, N\}$ such that $|I| \leq n$ and 
        \begin{equation*}
            x = \sum_{i \in I}q_i + \sum_{k \in \{1, \dots , N \}/I}q_k
        \end{equation*}
        where $q_i \in \co(X_i) / X_i$ and $q_k \in X_k$
    \end{Lemma}
    Let $\tau \in \strategyMax$ a strategy of \Max. To simplify the notations, we are going to denote $\max_{b \in \mathcal{B}} \min_{a \in \co(\mathcal{A})} \lVert a + b \rVert = \max_{b \in \mathcal{B}}d(b, -\co(\mathcal{A}))$ by $\lambda$. Given a strategy $\sigma$ of \Min we will denote the resulting position after $k$ turns, i.e $x_0 + a_1 + b_1 + \dots + a_k + b_k$ by $x_k(\sigma, \tau)$. We are going to prove that if $x_0 \in -n\co(\mathcal{A})$, there exists a strategy $\sigma^* \in \strategyMin$ such that for all $k \in \N$
    \begin{equation*}
        \delta_k \coloneqq d(x_k(\sigma^*, \tau), -n\co(\mathcal{A})) \leq k\lambda \enspace .
    \end{equation*}
    
   This would mean that, starting from $-n\co(\mathcal{A})$ we can guarantee a payoff lower than $\lambda$ and, as the initial position does not matter, we would have proven our proposition. Given the preceding sentence, we can consider the game where \Max plays first. 

   Let us assume that for a certain $k \in \N$, the above inequality is true. It means that $x_k \in -n\co(\mathcal{A}) + kB(\lambda)$,
   where $B(\lambda)$ denotes the closed ball centered at $0$ of radius $\lambda$,
and since, by definition, the move $b_{k+1}$ played by \Max given the strategy $\tau$ is in $-\co(\mathcal{A}) + B(\lambda)$. So 
    \begin{equation*}
        x_k + b_{k+1} \in -(n+1)\co(\mathcal{A}) + (k+1)B(\lambda)
    \end{equation*}
    The Shapley-Folkman lemma tells us that $-(n+1)\co(\mathcal{A}) \subset -n\co(\mathcal{A}) - \mathcal{A}$. So there exists at least one $\hat{a} \in \mathcal{A}$ such that 
    \begin{equation*}
        x_k + b_{k+1} \in -n\co(\mathcal{A}) - \hat{a} + (k+1)B(\lambda)
    \end{equation*}
    So by playing $a_k+1 = \hat{a}$, we get that 
    \begin{equation*}
        x_{k+1} \in -n\co(\mathcal{A}) + (k+1)B(\lambda)
    \end{equation*}
    Then our proposition follows by induction.
    Now, this setting not only helps to better understand what escape rate games are, but it illustrates \Cref{dlike} and the fact that the distance-like function $\phi$ gives us the reverse inequality immediately.
    Indeed
	\begin{align*}
	  S\phi(x) &= \min_{a \in \mathcal{A}}\max_{b \in \mathcal{B}}d(x+a+b, -(n+1)\co(\mathcal{A}))  \\          &= \min_{a \in \mathcal{A}}\max_{b \in \mathcal{B}}\min_{c \in (n+1)\co(\mathcal{A})} \lVert x+a+b + c \rVert  \\
		&\leq \min_{a \in \mathcal{A}}\min_{a' \in n\co(\mathcal{A})}\max_{b \in \mathcal{B}}\min_{d \in \co(\mathcal{A})}\lVert x + a + a' + b + d \rVert \\
		&\leq \min_{a \in \mathcal{A}}\min_{a' \in n\co(\mathcal{A})}\max_{b \in \mathcal{B}}\min_{d \in \co(\mathcal{A})}\lVert x + a + a'\rVert + \lVert  b + d \rVert \\
		&= \min_{c \in (n+1)\co(\mathcal{A})}\max_{b \in \mathcal{B}}\min_{d \in \co(\mathcal{A})}\lVert x + c\rVert + \lVert  b + d \rVert \quad \text{ (by Lemma~\ref{lem-shapley})}\\
		&= \phi(x) + \max_{b \in \mathcal{B}}\min_{d \in \co(\mathcal{A})}\lVert b + d \rVert  = \phi(x) +\lambda \enspace . \qedhere
		\end{align*}
\end{proof}
         \section{Concluding remarks}
         We finally raise some open questions.
         We observed that when the non-linear eigenproblem $Sv=\lambda +v$ is solvable with a function $v$ that is distance like, then, $\lambda$ is unique and coincides with the value of the escape rate game.
         It would interesting to give conditions for the solvability of this eigenproblem,
         beyond the tubular setting of Proposition~\ref{prop-tubular};
we note that related issues have arisen in the study of the cohomological equation and its generalizations, in the context of dynamical systems~\cite{Livic1972,Bousch2001,Bousch2011}.

Whereas, in the escape rate game, Player \Max\ always has an optimal stationary stragegy,
we only showed that Player \Min\ has an optimal stationary strategy
when the non-linear eigenproblem $Sv\leq \rho +v$ is solvable for some distance-like
function $v$. Investigating the existence of an optimal stationary strategy
of Player \Min\ beyond this case remains an open question.

Our final question concerns matrix multiplication games. We dealt with
payoff functions of the form $\limsup_k k^{-1}\log \|A_1B_1\dots A_kB_k\|$ where $\|\cdot\|$ is a norm
on the matrix space. It would be interesting
to deal with payoffs of the form
$\limsup_k k^{-1}\log \|xA_1B_1\dots A_kB_k\|$,
where now $\|\cdot\|$ is a norm on vectors, and $x$ is an initial vector.
This payoff generally depends on the choice of $x$.
A special case concerns finite sets of nonnegative matrices satisfying
a rectangularity assumption; this is known as entropy games~\cite{asarin_entropy}. The pointwise
version was solved in~\cite{akian2019} using o-minimal geometry techniques~\cite{bolte2013}.

\printbibliography
\end{document}